\documentclass[a4paper, reqno, 11pt]{amsart}
\usepackage[utf8]{inputenc}

\usepackage{amsthm}
\usepackage{amssymb}
\usepackage[american]{babel}
\usepackage{exscale}
\usepackage[mathscr]{euscript}
\usepackage{url}
\usepackage[all,ps,tips,tpic]{xy}
\usepackage{graphicx}
\usepackage{color}

\newtheorem{lem}{Lemma}[section]

\newtheorem{definition}[lem]{Definition}
\newtheorem{defprop}[lem]{Definition/Proposition}

\newtheorem{cor}[lem]{Corollary}
\newtheorem{thm}[lem]{Theorem}
\newtheorem{prop}[lem]{Proposition}

\newtheorem{conjecture}[lem]{Conjecture}
\newtheorem{question}[lem]{Question}

\theoremstyle{remark}
\newtheorem{rem}[lem]{Remark}

\DeclareMathOperator{\Hom}{Hom}

\newcommand{\F}{\mathbb{F}}

\newcommand{\tr}{\operatorname*{tr}}
\newcommand{\GL}{\mathrm{GL}}
\newcommand{\SL}{\mathrm{SL}}


\def\tilde{\widetilde}

\newcommand{\bG}{\mathbf{G}} 
\newcommand{\bH}{\mathbf{H}} 
\newcommand{\bT}{\mathbf{T}} 
\newcommand{\bh}{\mathbf{h}} 
\newcommand{\Sh}{\mathrm{Sh}} 

\newcommand{\triv}{{\mathbf{1}}} 
\newcommand{\cH}{\mathcal{H}} 
\newcommand{\cF}{\mathcal{F}}

\newcommand{\cZ}{\mathcal{Z}}
\newcommand{\cY}{\mathcal{Y}}
\newcommand{\cL}{\mathcal{L}}
\newcommand{\el}{\mathrm{ell}}
\newcommand{\st}{\mathrm{st}}
\newcommand{\vol}{\mathrm{vol}}
\newcommand{\KT}{\mathrm{KT}} 
\newcommand{\Res}{\mathrm{Res}} 
\newcommand{\GHreg}{(\bG,\bH)\mbox{-}\mathrm{reg}}
\newcommand{\semis}{\mathrm{ss}} 
\newcommand{\disc}{\mathrm{disc}} 
\newcommand{\der}{\mathrm{der}} 
\newcommand{\ur}{\mathrm{ur}} 
\newcommand{\Z}{\mathbb{Z}} 
\newcommand{\R}{\mathbb{R}} 
\newcommand{\Q}{\mathbb{Q}} 
\newcommand{\C}{\mathbb{C}} 
\newcommand{\A}{\mathbb{A}} 
\newcommand{\G}{\mathbb{G}} 
\newcommand{\cE}{\mathcal{E}} 
\newcommand{\cO}{\mathcal{O}} 
\newcommand{\cK}{\mathcal{K}} 

\newcommand{\ra}{\rightarrow} 
\newcommand{\hra}{\hookrightarrow} 
\newcommand{\Gal}{\mathrm{Gal}} 
\newcommand{\ord}{\mathrm{ord}} 
\newcommand{\nind}{{\rm n\textrm{-}ind}} 
\newcommand{\Spl}{\mathrm{Spl}}
\newcommand{\bs}{\backslash} %
\newcommand{\fkp}{{\mathfrak p}}
\newcommand{\fkS}{{\mathfrak S}}
\newcommand{\fkK}{{\mathfrak K}}
\def\hat{\widehat}  
\def\lg{\langle}
\def\rg{\rangle}
\def\ol{\overline}

\newdir{ >}{{}*!/-5pt/@{>}}
\SelectTips{cm}{10}

\begin{document}
\title[On the cohomology of Shimura varieties]{On the cohomology of compact unitary group Shimura varieties at ramified split places}
\author{Peter Scholze, Sug Woo Shin}
\begin{abstract}
In this article, we prove results about the cohomology of compact unitary group Shimura varieties at split places. In nonendoscopic cases, we are able to give a full description of the cohomology, after restricting to integral Hecke operators at $p$ on the automorphic side. We allow arbitrary ramification at $p$; even the PEL data may be ramified. This gives a description of the semisimple local Hasse-Weil zeta function in these cases.

We also treat cases of nontrivial endoscopy. For this purpose, we give a general stabilization of the expression given in \cite{ScholzeDefSpaces}, following the stabilization given by Kottwitz in \cite{KottwitzStabilization}. This introduces endoscopic transfers of the functions $\phi_{\tau,h}$ introduced in \cite{ScholzeDefSpaces}. We state a general conjecture relating these endoscopic transfers with Langlands parameters.

We verify this conjecture in all cases of EL type, and deduce new results about the endoscopic part of the cohomology of Shimura varieties. This allows us to simplify the construction of Galois representations attached to conjugate self-dual regular algebraic cuspidal automorphic representations of $\GL_n$, as previously constructed by one of us, \cite{Shin11}.
\end{abstract}

\setcounter{tocdepth}{1}

\date{\today}
\maketitle
\tableofcontents
\pagebreak

\section{Introduction}

This paper deals with the problem of determining the Galois action on the cohomology of Shimura varieties, specifically at places of bad reduction. Let us first briefly recall the expected description, due to Langlands.

Let $\Sh_K$ be some Shimura variety associated to a reductive group $\bG$ over $\Q$ and a compact open subgroup $K\subset \bG(\A_f)$, and some additional data. It is defined over a number field $E$, canonically embedded into $\C$. For definiteness, let us assume that the Shimura variety is compact, as we will only deal with this case; it is equivalent to requiring that $\bG$ is anisotropic modulo center\footnote{In the general case, one considers the intersection cohomology groups of the Baily-Borel compactification, and similar results are expected.
cf. \cite{Mor10}.}. Then one considers the etale cohomology groups
\[
H^i = \varinjlim_K H^i_{\mathrm{et}}(\mathrm{Sh}_K\otimes_E \bar{\Q},\bar{\mathbb{Q}}_\ell)\ ,
\]
which carry an action of $\bG(\A_f)$ (via Hecke correspondences) and of $\Gal(\bar{\Q}/E)$. In the main body of the paper we consider the cohomology groups with coefficients in a local system associated to an algebraic representation $\xi$ of $\bG$, where everything works without essential change. Let $H^\ast$ be the alternating sum of the $H^i$ in a suitable Grothendieck group; then one can write
\[
H^\ast = \sum_{\pi_f} \pi_f\otimes \sigma(\pi_f)\ ,
\]
where $\pi_f$ runs through irreducible admissible representations of $\bG(\A_f)$, and $\sigma(\pi_f)$ is some virtual finite-dimensional representation of $\Gal(\bar{\Q}/E)$.

If one forgets about the action of $\Gal(\bar{\Q}/E)$ and is only interested in describing the $\bG(\A_f)$-action, then one can base-change to $\C$ and use Matsushima's formula, which computes the cohomology as the real-analytic de Rham cohomology of $\Sh_K(\C)$. As $\Sh_K(\C)$ is a (finite disjoint union of) locally symmetric varieties for $\bG(\R)$, automorphic forms for $\bG$ enter the stage, and in particular one sees that if $\pi_f$ appears in $H^\ast$, then there is some $\pi_\infty$ such that $\pi_f\otimes \pi_\infty$ is an irreducible automorphic representation of $\bG(\A)$. Using relative Lie algebra cohomology of $\pi_\infty$, one expresses the contribution of any automorphic representation $\pi_f\otimes \pi_\infty$ of $\bG(\A)$ to $H^\ast$.

The situation becomes more complicated when one is trying to understand the Galois action. Let us recall the general recipe given in Kottwitz' paper \cite{KottwitzStabilization}. Let us restrict to the contribution of tempered representations for simplicity. (The general case is similar, using $A$-parameters instead.) Let us remark that tempered representations should only show up in the middle degree cohomology, so that this really only gives part of the whole picture.

First, note that any irreducible automorphic representation $\pi$ of $\bG(\A)$ should give rise to a global Langlands parameter, which should be some map
\[
\varphi_{\pi}: \mathcal{L}_\Q\ra {}^L \bG\ ,
\]
where $\mathcal{L}_\Q$ is the conjectural global Langlands group, and ${}^L \bG$ is the $L$-group of $\bG$, which is the semidirect product of the dual group $\hat{\bG}$ over $\C$ and $\Gal(\bar{\Q}/\Q)$. The parameter $\varphi_{\pi}$ should be discrete due to our assumption on $\bG$.
Fix an isomorphism $\bar{\mathbb{Q}}_\ell\cong \C$. Henceforth the subscript $\ell$ for a complex group will designate the base change from $\C$ to $\ol{\Q}_l$. In the cases of interest here, such a parameter $\varphi_{\pi}$ should be closely related to an $\ell$-adic Galois representation
\[
\varphi_{\pi,\ell}: \Gal(\bar{\Q}/\Q)\ra {}^L \bG_\ell \ .
\]
\footnote{We refer to \cite{BuzzardGee} for a detailed discussion of this point. There is some subtle twisting issue which we ignore here.} A discrete tempered global Langlands parameter
\[
\varphi: \mathcal{L}_\Q\ra {}^L \bG\ ,
\]
upon restriction to each place $v$ of $\Q$, should give rise to local $L$-packets $\Pi(\varphi_v)$, finite sets of isomorphism classes of representations of $\bG(\Q_v)$. We let $\Pi_f(\varphi)$ be the set of isomorphism classes of irreducible admissible representations $\pi_f$ of $\bG(\A_f)$ such that $\pi_p\in \Pi(\varphi_p)$ for all $p$.

On the other hand, the data defining the Shimura variety give rise to a natural representation
\[
r: {}^L (\bG\otimes_{\Q} E)\ra \GL_N\ ,
\]
for some integer $N$. In particular, one gets an $\ell$-adic Galois representation $r\circ \varphi_{\ell}|_{\Gal(\bar{\Q}/E)}$:
\[
\Gal(\bar{\Q}/E)\buildrel {\varphi_{\ell}}\over\ra {}^L (\bG\otimes_{\Q} E)_\ell\buildrel r\over\ra \GL_N(\bar{\Q}_\ell)\ .
\]
A first approximation would be that the contribution of tempered representations to $H^\ast$ is given by (up to twists)
\begin{equation}\label{e:cohom-Sh-naive}
\sum_{\varphi} \sum_{\pi_f\in \Pi_f(\varphi)} m(\pi_f) \pi_f\otimes (r\circ \varphi_{\ell}|_{\Gal(\bar{\Q}/E)})\ .
\end{equation}
Here $m(\pi_f)$ are certain (possibly negative) integers related to multiplicities of automorphic representations.

This approximation turns out to be correct in the situations where one can ignore endoscopy. A part of this paper will be restricted to such situations, but the construction of Galois representations requires working in a more general context.

The additional ingredient needed is the group $S_\varphi\subset \hat{\bG}$ of self-equivalences of $\varphi$, i.e. the subgroup of $\hat{\bG}$ which centralizes the image of $\varphi$ up to a locally trivial 1-cocycle of $\cL_{\Q}$ into $Z(\hat{\bG})$, cf. \cite{Kot84a}, \S11. Let $\mathfrak{S}_\varphi = S_\varphi / Z(\hat{\bG})$, which is a finite abelian group in the cases of interest\footnote{In fact, in most cases it is just a product of some copies of $\mathbb{Z}/2\mathbb{Z}$.}. If $\mathfrak{S}_\varphi$ is trivial, then the contribution of $\varphi$ to \eqref{e:cohom-Sh-naive} should be correct. Such $\varphi$ are called stable.

To give the general recipe, let us look at the corresponding $\ell$-adic version. We get an action of $S_{\varphi,\ell}$ on the Galois representation $r\circ \varphi_{\ell}|_{\Gal(\bar{\Q}/E)}$. One checks that the subgroup $Z(\hat{\bG})_\ell$ acts by a certain fixed character $\mu_1$. Let $\nu$ be a character of $S_{\varphi,\ell}$ whose restriction to $Z(\hat{\bG})_\ell$ is $\mu_1$, and denote by $(r\circ \varphi_{\ell}|_{\Gal(\bar{\Q}/E)})_\nu$ the direct summand of $r\circ \varphi_{\ell}|_{\Gal(\bar{\Q}/E)}$ on which $S_{\varphi,\ell}$ acts through the character $\nu$. Note that the set of such $\nu$ is a principal homogeneous space under the dual group of $\mathfrak{S}_\varphi$.

Now, the general contribution of tempered representations to $H^\ast$ should be (up to twists)
\begin{equation}\label{e:cohom-Sh}
\sum_\varphi \sum_{\pi_f\in \Pi_f(\varphi)} \sum_{\nu} m(\pi_f,\nu) \pi_f\otimes (r\circ \varphi_{\ell}|_{\Gal(\bar{\Q}/E)})_\nu\ .
\end{equation}
Again, $m(\pi_f,\nu)$ are certain integers, whose general definition is somewhat subtle.

The object of this paper is to verify such a description in as many cases as possible. As even the ingredients in this formula remain largely conjectural, one has to be more specific about the precise statement one wants to prove. We fix a prime $\fkp$ of $E$ above a rational prime $p\neq \ell$, and restrict the representation of $\Gal(\bar{\Q}/E)$ to the Weil group $W_{E_\fkp}$ of $E_\fkp$. Then $\pi_p$ has a local $L$-parameter, which gives a map
\[
\varphi_{\pi_p}: W_{\Q_p}\ra {}^L \bG_\ell\ .
\]
One expects that if $\pi_f\in \Pi(\varphi)$, then $\varphi_{\pi_p}$ is the restriction of $\varphi_\ell$ to the local Weil group; this is known as the local-global compatibility. In particular, we have
\[
(r\circ \varphi_{\ell}|_{\Gal(\bar{\Q}/E)})|_{W_{E_\fkp}} = r\circ \varphi_{\pi_p}|_{W_{E_\fkp}}\ ,
\]
so that the local Weil group representations are determined by the local components of the representation $\pi_f$. We want to prove that the contribution of tempered representations to $H^\ast$ is
\begin{equation}\label{e:cohom-Sh-local}
\sum_\varphi \sum_{\pi_f\in \Pi_f(\varphi)} \sum_{\nu} m(\pi_f,\nu) \pi_f\otimes (r\circ \varphi_{\pi_p}|_{W_{E_\fkp}})_\nu
\end{equation}
as a virtual $\bG(\A_f)\times W_{E_\fkp}$-representation.

This still requires the existence of the local Langlands correspondence for the group $\bG(\Q_p)$. By \cite{HT01} and \cite{Hen00}, it is known for general linear groups. It is in this setup that we prove our results. In other words, we start with a Shimura variety associated to a reductive group $\bG$ over $\Q$ which is anisotropic modulo center, such that locally at $p$, $\bG$ is a product of Weil restrictions of general linear groups. As $\bG$ will be a unitary (similitude) group in this case, we use the term compact unitary group Shimura varieties, and the prime $p$ is called a split place.\footnote{A slightly unfortunate terminology, as it only implies that $\bG_{\Q_p}$ is quasisplit, but not that it is split. A justification is that the primes above $p$ should be split between the totally real field and its CM quadratic extension defining the hermitian form for $\bG$.} We note that we allow both the representation $\pi_p$ and the implicit CM field defining the unitary group $\bG$ to be ramified at $p$.

Our first theorem is the following. Here, we allow coefficients given by a general irreducible algebraic representation $\xi$ of $\bG$. The corresponding cohomology group is denoted $H^\ast_\xi$.

\begin{thm} Assume that $\Sh$ is a compact unitary group Shimura variety, $p$ is a split place, and endoscopy is trivial, as in \cite{KottwitzLambdaAdic}. Then
\[
H^\ast_\xi = \sum_{\pi_f} a(\pi_f) \pi_f\otimes (r\circ \varphi_{\pi_p}|_{W_{E_\fkp}})|\cdot|^{-\dim \Sh/2}
\]
as a virtual $\bG(\A_f^p)\times \bG(\Z_p)\times W_{E_\fkp}$-representation. Here $\pi_f$ runs through irreducible admissible representations of $\bG(\A_f)$, the integer $a(\pi_f)$ is as in \cite{KottwitzLambdaAdic}, p. 657, and $\varphi_{\pi_p}$ is the local Langlands parameter associated to $\pi_p$. Also $\bG(\Z_p)$ is a certain maximal compact subgroup of $\bG(\Q_p)$.
\end{thm}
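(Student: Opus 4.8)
The plan is to compute, for every $f^{p,\infty}$ in the Hecke algebra of $\bG(\A_f^p)$, every $h$ in the Hecke algebra of $\bG(\Z_p)$, and every $\tau\in W_{E_\fkp}$ mapping to a positive power of the geometric Frobenius, the trace $\tr(\tau\times hf^{p,\infty}\mid H^\ast_\xi)$; these numbers determine $H^\ast_\xi$ as a virtual $\bG(\A_f^p)\times\bG(\Z_p)\times W_{E_\fkp}$-representation. The computation has three steps: (i) expand the trace by the Langlands--Kottwitz type point-counting formula proved at ramified $p$ in \cite{ScholzeDefSpaces}; (ii) stabilize the resulting geometric expression following Kottwitz \cite{KottwitzStabilization}, which in the non-endoscopic situation of \cite{KottwitzLambdaAdic} leaves only the stable contribution of $\bG$; and (iii) insert a local character identity for the test function $\phi_{\tau,h}$, which packages local Langlands for $\GL_n$ \cite{HT01,Hen00} and produces the Galois factor $(r\circ\varphi_{\pi_p}|_{W_{E_\fkp}})|\cdot|^{-\dim\Sh/2}$.

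For step (i), fix $\tau$ mapping to the $j$-th power of Frobenius with $j\geq 1$. By \cite{ScholzeDefSpaces},
\[
\tr(\tau\times hf^{p,\infty}\mid H^\ast_\xi)=\sum_{(\gamma_0;\gamma,\delta)}c(\gamma_0;\gamma,\delta)\,O_\gamma(f^{p,\infty})\,TO_{\delta\sigma}(\phi_{\tau,h})\,\tr\xi(\gamma_0),
\]
the sum over the usual Kottwitz triples, where $\phi_{\tau,h}$ is a function on $\bG(\Q_{p^j})$ manufactured from $\tau$, $h$, and the nearby cycles on the deformation spaces of the relevant $p$-divisible groups. Because $p$ is a split place, $\bG_{\Q_p}$ is a product of Weil restrictions of general linear groups over $p$-adic fields; hence the twisted group attached to $\bG(\Q_{p^j})$ with its Frobenius is again of this type, base change of functions is available, and $\phi_{\tau,h}$ admits a base-change transfer $f_{\tau,h}\in C_c^\infty(\bG(\Q_p))$ matching the twisted orbital integrals $TO_{\delta\sigma}$ with ordinary ones $O_\gamma$. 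After this substitution the right-hand side is the geometric expansion used in \cite{KottwitzLambdaAdic} with $p$-component $f_{\tau,h}$.

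For step (ii), I would stabilize this expression as in \cite{KottwitzStabilization} and then run the argument of \cite{KottwitzLambdaAdic} verbatim: under its hypotheses the proper elliptic endoscopic groups contribute trivially, so passing to the spectral side via the stable trace formula for $\bG$ yields
\[
\tr(\tau\times hf^{p,\infty}\mid H^\ast_\xi)=\sum_{\pi}m(\pi)\,\tr\bigl(\pi^{p,\infty}(f^{p,\infty})\bigr)\,\tr\bigl(\pi_p(f_{\tau,h})\bigr)\,\tr\bigl(\pi_\infty(f_{\xi,\infty})\bigr),
\]
with $\pi=\pi^{p,\infty}\otimes\pi_p\otimes\pi_\infty$ running over automorphic representations of $\bG(\A)$ and $f_{\xi,\infty}$ the Euler--Poincar\'e function pinned down by $\xi$, so that $\tr(\pi_\infty(f_{\xi,\infty}))$ is nonzero only for $\pi_\infty$ in the discrete series $L$-packet determined by $\xi$. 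Collecting the archimedean factors and automorphic multiplicities into the integers $a(\pi_f)$ of \cite{KottwitzLambdaAdic}, p.~657, for $\pi_f=\pi^{p,\infty}\otimes\pi_p$, the right-hand side becomes $\sum_{\pi_f}a(\pi_f)\,\tr(\pi^{p,\infty}(f^{p,\infty}))\,\tr(\pi_p(f_{\tau,h}))$.

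Step (iii) is the crux. What is required is the local character identity
\[
\tr\bigl(\pi_p(f_{\tau,h})\bigr)=\tr\bigl(\pi_p(h)\bigr)\cdot\tr\Bigl(\tau\mid (r\circ\varphi_{\pi_p}|_{W_{E_\fkp}})\,|\cdot|^{-\dim\Sh/2}\Bigr)
\]
for every irreducible smooth representation $\pi_p$ of $\bG(\Q_p)$, where $\varphi_{\pi_p}$ is attached to the $\GL_n$-components of $\pi_p$ by \cite{HT01,Hen00}. This is where ramification at $p$ makes the problem genuinely hard: there is no Satake transform with which to compute $f_{\tau,h}$ directly, so one must use Scholze's description of $\phi_{\tau,h}$ through the nearby cycles on the deformation spaces of $p$-divisible groups and the identification --- via the comparison of these spaces with Rapoport--Zink spaces and the known cohomology of Lubin--Tate space --- of their cohomology with a shift by $|\cdot|^{-\dim\Sh/2}$ of the $\GL_n$ local Langlands correspondence; substituting into the trace against $\pi_p$ produces the displayed identity, the shift giving the normalization in the theorem. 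Granting it, one obtains
\[
\tr(\tau\times hf^{p,\infty}\mid H^\ast_\xi)=\sum_{\pi_f}a(\pi_f)\,\tr\bigl(\pi_f(hf^{p,\infty})\bigr)\,\tr\Bigl(\tau\mid (r\circ\varphi_{\pi_p}|_{W_{E_\fkp}})|\cdot|^{-\dim\Sh/2}\Bigr),
\]
and since this holds for all admissible $\tau$, $h$, $f^{p,\infty}$, linear independence of characters of the Hecke algebra of $\bG(\A_f^p)\times\bG(\Z_p)$, together with the fact that such $\tau$ determine a virtual $W_{E_\fkp}$-representation, upgrades it to the claimed identity of virtual $\bG(\A_f^p)\times\bG(\Z_p)\times W_{E_\fkp}$-representations. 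Steps (i)--(ii) are essentially a reorganization of \cite{KottwitzLambdaAdic} with $\phi_{\tau,h}$ in place of an unramified Hecke operator; the main obstacle is the local identity of step (iii), along with checking that the base change and the central-character twist are handled correctly.
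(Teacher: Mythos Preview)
Your overall architecture --- (i) the point-counting formula from \cite{ScholzeDefSpaces}, (ii) pseudostabilization following \cite{KottwitzLambdaAdic}, and (iii) a local character identity for the base-change transfer $f_{\tau,h}$ of $\phi_{\tau,h}$ --- matches the paper's exactly, and you have correctly isolated the identity
\[
\tr\bigl(\pi_p(f_{\tau,h})\bigr)=\tr\bigl(\pi_p(h)\bigr)\cdot\tr\bigl(\tau\mid (r\circ\varphi_{\pi_p}|_{W_{E_\fkp}})\,|\cdot|^{-\dim\Sh/2}\bigr)
\]
as the crux. Steps (i)--(ii) are essentially the content of the paper's Theorem~\ref{MainTheorem3}.

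The gap is in your proposed justification of step (iii). You suggest deriving the identity locally, via ``the comparison of these spaces with Rapoport--Zink spaces and the known cohomology of Lubin--Tate space.'' But Lubin--Tate space corresponds only to simple EL data of signature $(1,n-1)$; in that one case the identity was indeed proved locally in \cite{ScholzeGLn}. For general EL data (arbitrary signature, products of Weil restrictions) the cohomology of the relevant Rapoport--Zink spaces was not available in a form that yields the identity directly, and no purely local argument along the lines you sketch is carried out.

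The paper instead proves the character identity (its Theorem~\ref{t:conj-GLn}) by a \emph{global} bootstrap. One globalizes the given local EL datum to a compact unitary Shimura variety with trivial endoscopy satisfying the Harris--Labesse hypotheses. For $\pi_p$ arising as the $p$-component of a suitable automorphic $\pi$ (supercuspidal at some auxiliary split prime), one first uses the \emph{unramified} case of the identity (Lemma~\ref{l:conj-unramified}) at all good split places, together with the pseudostabilized trace formula, to pin down $H^\ast_\xi(\pi_f)|_{W_{E_\fkp}}$ there; then one invokes the global Galois representation attached to the base change of $\pi$ (Harris--Taylor, via Corollary~\ref{cor:ExGalRepr}) and Chebotarev density to propagate the description of $H^\ast_\xi(\pi_f)|_{W_{E_\fkp}}$ to the given, possibly ramified, place $\fkp$. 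Feeding this back into the pseudostabilized trace identity (Theorem~\ref{MainTheorem3}) yields the character identity for such $\pi_p$. Finally, a density argument --- the trace Paley--Wiener theorem of \cite{BDK86} combined with the globalization result \cite[Thm~4.4]{Shin-Plan} showing that such $\pi_p$ are Zariski dense --- extends the identity to all irreducible $\pi_p$. Thus the ramified local identity is ultimately deduced from the unramified one plus global input, not from the local geometry of Rapoport--Zink spaces.
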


The restriction of the $\bG(\Q_p)$-action to $\bG(\Z_p)$ is due to our method of proof. Let us remark that in the situation of the theorem, one expects that the $p$-component $\pi_p$ is determined by $\pi_f^p$. Granting this, the identity would extend to an identity of $\bG(\A_f)\times W_{E_\fkp}$-representations. Unconditionally, the theorem gives a description of the semisimple Hasse-Weil zeta function.

\begin{cor} In the situation of the theorem, let $K\subset \bG(\A_f)$ be any sufficiently small compact open subgroup. Then the semisimple local Hasse-Weil zeta function of $\Sh_K$ at the place $\fkp|p$ of $E$ is given by
\[
\zeta_\fkp^\semis (\Sh_K,s) = \prod_{\pi_f} L^\semis(s-\dim \Sh/2, \pi_p, r)^{a(\pi_f)\dim \pi_f^K}\ .
\]
\end{cor}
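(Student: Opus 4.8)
The plan is to deduce the Corollary from the Theorem by the standard recipe: the semisimple local Hasse-Weil zeta function of $\Sh_K$ at $\fkp$ is, by definition, the alternating product
\[
\zeta_\fkp^\semis(\Sh_K,s) = \prod_i \det\left(1 - \mathrm{Frob}_\fkp \cdot (N\fkp)^{-s}\,\big|\,H^i_\et(\Sh_K\otimes_E\bar\Q,\bar\Q_\ell)\right)^{(-1)^{i+1}},
\]
where one uses the semisimplified Frobenius action (Grothendieck's trace on the virtual Galois module $H^\ast$, taken in the semisimplified sense). Taking $\xi$ trivial, and passing to $K$-invariants, the Theorem says that as a virtual $W_{E_\fkp}$-representation,
\[
H^\ast(\Sh_K) = (H^\ast_\xi)^K = \sum_{\pi_f} a(\pi_f)\,\dim(\pi_f^K)\;\big(r\circ\varphi_{\pi_p}|_{W_{E_\fkp}}\big)|\cdot|^{-\dim\Sh/2}.
\]
Here I have used that $\dim \pi_f^K = \dim(\pi_f^p)^{K^p}\cdot\dim\pi_p^{K_p}$ is finite since $K$ is sufficiently small, and that only finitely many $\pi_f$ contribute nontrivially, so the sum is well defined in the Grothendieck group of $W_{E_\fkp}$-representations.

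Now I would simply translate "alternating product of characteristic polynomials of Frobenius" into "$L$-factor." By the very definition of the local $L$-factor attached to a Weil group representation, for any Frobenius-semisimple representation $\rho$ of $W_{E_\fkp}$ one has
\[
L(s,\rho) = \det\left(1 - \mathrm{Frob}_\fkp\,(N\fkp)^{-s}\,\big|\,\rho^{I_\fkp}\right)^{-1},
\]
and $L^\semis$ is defined using the Frobenius-semisimplification; the identity $(r\circ\varphi_{\pi_p}|_{W_{E_\fkp}})^\semis$ on the Galois side matches the representation entering $L^\semis(s,\pi_p,r)$ on the automorphic side, by the compatibility between the local Langlands correspondence for $\GL_n$ and $L$-functions (Henniart, Harris--Taylor), together with the definition of $r$ as the representation cutting out the Shimura-variety Galois module. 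The shift $|\cdot|^{-\dim\Sh/2}$ translates into the shift $s\mapsto s-\dim\Sh/2$ in the $L$-function, since twisting a Weil representation by $|\cdot|^{a}$ shifts its $L$-factor by $s\mapsto s+a$ (and $\dim\Sh = \dim\Sh_K$). Assembling: each $\pi_f$ contributes the factor $L^\semis(s-\dim\Sh/2,\pi_p,r)$ raised to the power $a(\pi_f)\dim\pi_f^K$, where I have pulled the multiplicity out of the determinant using multiplicativity of $\det$ under direct sums and the fact that $a(\pi_f)\in\Z$ (possibly negative) turns into an exponent, possibly negative, in the product. This is exactly the asserted formula.

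The only genuine subtlety — and the step I expect to require the most care — is bookkeeping around semisimplification and the relation between $L$-factors of the full Weil group representation versus its inertia invariants: one must check that forming $K$-invariants, taking the alternating sum over cohomological degrees, and semisimplifying the Frobenius action all commute appropriately, and that "tempered representations only contribute in the middle degree" is not actually needed here because the Theorem already packages all degrees into $H^\ast_\xi$. One also needs the convergence/rationality statement implicit in writing a Hasse-Weil zeta function as an honest product, but for fixed $\fkp$ this is the elementary fact that the sum over $\pi_f$ is finite. Everything else is formal manipulation in the Grothendieck group of $W_{E_\fkp}$-representations.
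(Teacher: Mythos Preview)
Your proposal is correct and follows the same route as the paper, which simply says the corollary ``follows directly from the previous theorem and the definitions.'' The one point the paper makes explicit that you only use implicitly is the reduction to $K\subset \bG(\A_f^p)\times \bG(\Z_p)$: the Theorem only gives an identity of $\bG(\A_f^p)\times \bG(\Z_p)\times W_{E_\fkp}$-representations, not of $\bG(\A_f)\times W_{E_\fkp}$-representations, so taking $K$-invariants requires $K$ to factor as $K^p\times K_p$ with $K_p\subset \bG(\Z_p)$; your line $\dim\pi_f^K = \dim(\pi_f^p)^{K^p}\cdot\dim\pi_p^{K_p}$ already presupposes this, so you should state the reduction up front.
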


We remark that in these nonendoscopic cases, no form of the fundamental lemma or related work is needed, except for the existence of base change transfers for $\GL_n$, as proved by Arthur-Clozel, \cite{ArthurClozel}.

The second theorem is a description of the cohomology of compact
  unitary group Shimura varieties at split primes in endoscopic cases.
  In this case, the full machinery of endoscopy (for unitary groups) is used, and we have to make some additional assumptions
  concerning our data to establish unconditional results.
  Specifically, consider a unitary group $\bG$
  relative to a CM quadratic extension $F/F_0$ such that $F$ contains
  an imaginary quadratic field $\cK$. We assume that $F_0\neq \Q$.
  Fix a rational prime $p$ split in $\cK$ and a prime $\fkp$ of
  the reflex field $E$ above $p$. We assume that $\bG$ is quasisplit locally at all finite places but we do not fix the signature at infinity,
  except that $\bG$ has a compact factor at one infinite place.
  The group $\bG(\A_\cK)$ admits an order 2 automorphism $\theta$ induced by
  the complex conjugation on $\cK$.
  Let $\Pi$ be a $\theta$-stable automorphic representation of $\bG(\A_\cK)\cong \GL_1(\A_\cK)\times \GL_n(\A_F)$
  which has infinite component determined by $\xi$. We define
  a virtual representation of $\Gal(\ol{\Q}/E)$
  \begin{equation}\label{e:H(pi_f)}
    H^*_\xi(\pi_f)=\Hom_{\bG(\A_f)} (\pi_f,H^*_\xi)
  \end{equation}
 and also the $\Pi$-part $H^*_\xi(\Pi)$ of $H^*_\xi$.
 The latter is roughly the sum of $ H^*_\xi(\pi_f)$ over the set of $\pi_f$ whose
 base change is $\Pi_f$ (cf. \eqref{e:Pi-part}).
  We consider two cases, which are loosely described as:
\begin{itemize}
  \item (Case ST) $\Pi$ is cuspidal.
\item (Case END) $\Pi$ is induced from a $\theta$-stable cuspidal automorphic representation on
a maximal proper Levi subgroup of $\bG(\A_\cK)$.
\end{itemize}

\begin{thm}\label{MainTheorem4} The following hold up to an explicit constant (cf. \eqref{e:C}). In (Case ST)
\[
H^*_\xi(\Pi)|_{W_{E_{\fkp}}}=
(r\circ \varphi_{\pi_p}|_{W_{E_{\fkp}}})|\cdot|^{-\dim\Sh/2}\ .
\]
In (Case END) there exists a natural decomposition
\[
r\circ \varphi_{\pi_p}|_{W_{E_{\fkp}}} = (r\circ \varphi_{\pi_p}|_{W_{E_{\fkp}}})_1 + (r\circ \varphi_{\pi_p}|_{W_{E_{\fkp}}})_2
\]
into two representations, and some $i\in \{1,2\}$ depending only on $\Pi$ such that
\[
H^*_\xi(\Pi)|_{W_{E_{\fkp}}}= (r\circ \varphi_{\pi_p}|_{W_{E_{\fkp}}})_i |\cdot|^{-\dim\Sh/2}\ .
\]
\end{thm}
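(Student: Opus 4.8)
The plan is to begin from the expression for $H^*_\xi$ established in \cite{ScholzeDefSpaces}, stabilize it following Kottwitz \cite{KottwitzStabilization}, and then isolate the $\Pi$-part by combining the endoscopic classification of automorphic representations of unitary groups with base change to $\GL_n$; the decisive local input at $p$ is the compatibility of the endoscopic transfers of the functions $\phi_{\tau,h}$ with Langlands parameters, which the paper proves in EL type. Concretely, I would first recall that \cite{ScholzeDefSpaces} expresses, for each irreducible algebraic $\xi$, each $f^p$ on $\bG(\A_f^p)$, each $h$ in the Hecke algebra of $\bG(\Z_p)$, and each large enough power $\tau$ of geometric Frobenius in $W_{E_\fkp}$, the trace $\tr(\tau\times hf^p\mid H^*_\xi)$ as a sum of products of (twisted) orbital integrals whose $p$-component is the orbital integral of $\phi_{\tau,h}$. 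This sum has exactly the shape to which Kottwitz' stabilization applies, which the paper carries out in general, giving
\[
\tr(\tau\times hf^p\mid H^*_\xi)=\sum_{\bH}\iota(\bG,\bH)\,ST^*_{\bH}(f^H)\ ,
\]
where $\bH$ runs over elliptic endoscopic triples for $\bG$, $f^H=f^{H,p}f^H_pf^H_\infty$, with $f^H_p$ the endoscopic transfer of $\phi_{\tau,h}$, $f^{H,p}$ a transfer of $f^p$, and $f^H_\infty$ a stabilizing archimedean function built from $\xi$.

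Next, each $\bH$ is a product of (similitude) unitary groups carrying a base change ${}^L\bH\ra{}^L\bG$ and thence a base change to products of general linear groups over $F$ and $\cK$ (compatibly with $\bG(\A_\cK)\cong\GL_1(\A_\cK)\times\GL_n(\A_F)$). I would feed $ST^*_{\bH}(f^H)$ into the stable trace formula and decompose it along discrete parameters using the endoscopic classification for quasisplit unitary groups and its consequences; since $\bG$ is quasisplit at every finite place and fails to be so only at $\infty$, the only subtlety is archimedean, where the relevant packets are discrete-series $L$-packets with their Shelstad--Kottwitz sign characters. Isolating the $\Pi$-part then keeps only the parameters $\psi_\bH$ transferring to $\varphi_\Pi$. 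In (Case ST) $\Pi$ is cuspidal, so $\varphi_\Pi$ is simple and stable and only $\bH=\bG^*$, the quasisplit inner form of $\bG$, contributes. In (Case END) $\varphi_\Pi=\varphi_1\oplus\varphi_2$ with $\varphi_1,\varphi_2$ distinct conjugate self-dual summands of equal parity, so $\mathfrak{S}_{\varphi_\Pi}$ has order $2$ and exactly two terms survive: $\bH=\bG^*$ and the elliptic endoscopic group $\bH_1$ (of type $U(n_1)\times U(n_2)$ up to the similitude factor) attached to the nontrivial class in $\mathfrak{S}_{\varphi_\Pi}$; the index $i\in\{1,2\}$ is the one singled out by the sign character $\langle\cdot,s_{\bH_1}\rangle$, which depends only on $\Pi$.

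Finally I would insert the local statement at $p$: by the main local theorem of the paper (the Scholze--Shin conjecture, proved in EL type), for each relevant $\bH$ the transfer $(\phi_{\tau,h})^H$ is spectrally matched, against the local $L$-packet $\Pi(\varphi^H_p)$, with the $\ell$-adic representation $r_\bH\circ\varphi^H_p$ attached to $\bH$, up to the normalizing twist $|\cdot|^{-\dim\Sh/2}$. Combining this with the endoscopic sign characters and the multiplicity formula, the $\bH$-contribution to $H^*_\xi(\Pi)$ becomes a fixed constant times the $\nu_\bH$-isotypic part of $r\circ\varphi_{\pi_p}|_{W_{E_\fkp}}$. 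In (Case ST) this yields $r\circ\varphi_{\pi_p}|_{W_{E_\fkp}}\,|\cdot|^{-\dim\Sh/2}$, up to the constant of \eqref{e:C}. In (Case END), summing the $\bG^*$-term (which sees $(r\circ\varphi_{\pi_p}|_{W_{E_\fkp}})_1+(r\circ\varphi_{\pi_p}|_{W_{E_\fkp}})_2$) against the $\bH_1$-term (which sees $\pm$ their difference), weighted by the $\iota(\bG,\bH)$ and the internal trace-formula multiplicities, all contributions collapse to $(r\circ\varphi_{\pi_p}|_{W_{E_\fkp}})_i\,|\cdot|^{-\dim\Sh/2}$, again up to \eqref{e:C}; standard arguments (linear independence of characters on the automorphic side, Frobenius eigenvalues on the Galois side) then upgrade the equality of traces for all $\tau$ to the stated identity of virtual $W_{E_\fkp}$-representations.

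The main obstacle is the local input at $p$, namely showing that the Langlands--Shelstad transfer of $\phi_{\tau,h}$ is spectrally matched with $r_\bH\circ\varphi^H_p$: this requires identifying $\phi_{\tau,h}$ through the cohomology of the relevant Rapoport--Zink/Lubin--Tate deformation spaces and controlling its behaviour under endoscopic transfer, which in the EL case can be reduced to known results on the cohomology of such spaces (Harris--Taylor, \cite{Shin11}) but is delicate because of the ramification allowed. A secondary difficulty is bookkeeping — fixing consistent normalizations of transfer factors, of the archimedean signs, and of the multiplicities $m(\pi_f,\nu)$ across all the groups $\bH$ — which is precisely what forces the ``up to an explicit constant'' in the statement, and is the reason for the auxiliary hypotheses ($F_0\neq\Q$, $F\supset\cK$, $\bG$ quasisplit at finite places, a compact factor at $\infty$), under which the classification and base change for unitary groups are available in the required form.
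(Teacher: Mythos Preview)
Your proposal is essentially the same approach as the paper's. You correctly identify the three ingredients: the stabilization of the trace formula from \cite{ScholzeDefSpaces} (the paper's Theorem \ref{MainTheorem2}), the spectral comparison via base change to $\GL_n$ following \cite{Shin11}, and the local identity at $p$ for the endoscopic transfers of $\phi_{\tau,h}$ (the paper's Theorem \ref{t:conj-GLn} and Corollary \ref{c:conj-GLn}).

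One point you pass over too quickly is the determination of the index $i$ in (Case END). The trace-formula argument as you outline it produces the right-hand side only up to the sign $e_\Pi\in\{\pm 1\}$; the paper does not read this off directly from $\langle\cdot,s_{\bH_1}\rangle$ but instead first proves the identity up to sign and then pins $e_\Pi$ down a posteriori by proving (via the Weil conjectures and temperedness of $\Pi_v$) that the $\pi_f$-isotypic cohomology is concentrated in middle degree, so that $H^*_\xi(\Pi)$ is an honest representation rather than a virtual one (Corollary \ref{cor:sign-correct}). Your sketch should acknowledge this extra step. Also, your description of how the local input at $p$ is established is slightly off: the paper does not compute the cohomology of Rapoport--Zink spaces directly, but rather runs a global-to-local argument in the nonendoscopic setting (Theorem \ref{MainTheorem3}) together with a density argument in the Bernstein variety to prove the identity for all $\pi_p$, and then deduces the endoscopic case by a Levi/twisting computation.
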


As a corollary of this theorem, one can reprove the existence of Galois representations associated to regular algebraic conjugate selfdual cuspidal automorphic representations of $\GL_n$ over CM fields, as previously constructed by one of us, \cite{Shin11}, cf. Theorem \ref{t:ExGalRepr}. We can also reprove the Ramanujan-Petersson conjecture at unramified places, cf. \cite{ClozelPurity}:

\begin{cor} Let $F$ be a CM field, and let $\Pi$ be a regular algebraic conjugate selfdual cuspidal automorphic representation for $\GL_n(\A_F)$. Then for all finite places $v$ of $F$ lying above a rational prime $p$ above which $F$ and $\Pi$ are unramified, the component $\Pi_v$ is tempered.
\end{cor}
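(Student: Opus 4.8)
The plan is to deduce this from Theorem \ref{MainTheorem4}, (Case ST), together with Deligne's purity theorem for the cohomology of a proper smooth Shimura variety; in effect it repackages the argument of \cite{ClozelPurity}. Fix a finite place $v$ of $F$ over a rational prime $p$ at which $F$ and $\Pi$ are unramified; it suffices to prove $\Pi_v$ tempered. Since temperedness of a local component of an admissible representation of $\GL_n$ over a $p$-adic field is insensitive to a finite base change of the local field — both are equivalent to boundedness of the associated $L$-parameter — I would first replace $F$ by a solvable CM extension $F'$ chosen so that (i) $F'$ contains an imaginary quadratic field $\cK$ in which $p$ splits, (ii) the maximal totally real subfield $F'_0$ of $F'$ is not $\Q$, and (iii) $\Pi_{F'}$ remains cuspidal. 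Condition (iii) holds for all but finitely many choices of $\cK$, since the self-twists of $\Pi$ form a finite group (Arthur--Clozel, \cite{ArthurClozel}); (i) and (ii) are arranged by a further, harmless, solvable base change. As base change preserves regular algebraicity and conjugate self-duality, after replacing $v$ by a place above it we are exactly in the setting of Theorem \ref{MainTheorem4}, (Case ST), with $F\supset\cK$, $p$ split in $\cK$, $F_0\neq\Q$, and $\Pi$ cuspidal, regular algebraic and conjugate self-dual.

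Next I would build the auxiliary Shimura datum. Choose a unitary group $\bG$ over $\Q$ attached to $F/F_0$ that is quasisplit at every finite place and has a compact factor at one infinite place (possible since $F_0\neq\Q$), with the signatures at the remaining infinite places chosen so that the algebraic representation $\xi$ with the infinitesimal character of $\Pi_\infty$ is a relevant coefficient system. Twisting $\Pi$ by a suitable algebraic Hecke character of $\cK$ (whose existence is a standard fact about Hecke characters of CM fields) produces a $\theta$-stable automorphic representation of $\bG(\A_\cK)\cong\GL_1(\A_\cK)\times\GL_n(\A_F)$, which base change descends to automorphic representations $\pi_f$ of $\bG(\A_f)$ with nonzero $\Pi$-part $H^*_\xi(\Pi)$.

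Now I would extract purity. Being the archimedean component of a cuspidal representation, $\Pi_\infty$ is unitary, and a unitary cohomological representation of $\GL_n(\R)$ (resp. $\GL_n(\C)$) with regular infinitesimal character is tempered; hence $\Pi_\infty$ is tempered, so the relative Lie algebra cohomology computing the $\Pi$-contribution to $H^i_\xi$ is concentrated in the middle degree $\dim\Sh$. Therefore $H^*_\xi(\Pi)$ is, up to an overall sign, a genuine subquotient of the middle \'etale cohomology of the proper smooth Shimura variety with coefficients in the pure local system attached to $\xi$, and so it is pure of a single weight by Deligne's purity theorem. By Theorem \ref{MainTheorem4}, (Case ST), up to an explicit nonzero constant,
\[
H^*_\xi(\Pi)|_{W_{E_\fkp}} = (r\circ\varphi_{\pi_p}|_{W_{E_\fkp}})\,|\cdot|^{-\dim\Sh/2}\ ,
\]
and since the right-hand side is a genuine Weil group representation, we conclude that $r\circ\varphi_{\pi_p}|_{W_{E_\fkp}}$ is pure.

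Finally I would conclude at $v$. Because $p$ splits in $\cK$, every place of $F$ above $p$ is split over $F_0$, so the contribution of $v$ to $r$ is the standard $n$-dimensional representation of $\GL_n$; by local--global compatibility at the split place $p$, governed by the local Langlands correspondence for $\GL_n$ of \cite{HT01} and \cite{Hen00}, the Frobenius eigenvalues of $r\circ\varphi_{\pi_p}|_{W_{E_\fkp}}$ contain, up to the fixed weight normalization, the Satake parameters $\alpha_1,\dots,\alpha_n$ of the unramified representation $\Pi_v$. Purity forces $|\alpha_1|=\dots=|\alpha_n|$, and comparing with the unitary central character of $\Pi_v$ gives $|\alpha_i|=1$, i.e. $\Pi_v$ is tempered; as $v$ was arbitrary this proves the corollary (and, via Theorem \ref{t:ExGalRepr}, recovers purity of the attached Galois representations at unramified places as well). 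The step I expect to be the main obstacle is the purity input of the third paragraph: one must be sure that $H^*_\xi(\Pi)$ is a genuine representation pure of one weight, rather than a virtual alternating sum in which cancellations between cohomological degrees of different weights could occur. This rests on the temperedness of $\Pi_\infty$ — available here precisely because $\Pi$ is regular algebraic — and on the concentration of $(\mathfrak g,K)$-cohomology of tempered representations in the middle degree, together with the bookkeeping needed to descend $\Pi$ to a unitary group meeting all the local hypotheses of Theorem \ref{MainTheorem4}, which is what forces the solvable base change of the first step.
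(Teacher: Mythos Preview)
Your overall strategy is sound and close to the paper's, but there is a genuine gap in the final step. You assert that ``because $p$ splits in $\cK$, every place of $F$ above $p$ is split over $F_0$, so the contribution of $v$ to $r$ is the standard $n$-dimensional representation of $\GL_n$.'' The splitting of $p$ in $\cK$ does give the product decomposition $\bG(\Q_p)\cong \Q_p^\times\times\prod_w \GL_n(F_w)$, but it says nothing about the representation $r=r_{-\mu}$: that is determined by the cocharacter $\mu$, hence by the \emph{archimedean} signatures of the unitary group, which you never specify. If the signature at the infinite place controlling the factor of $v$ is $(0,n)$, then $r$ is trivial on that factor and you learn nothing about $\Pi_v$; to get the standard representation there you need signature $(1,n-1)$ at exactly the right infinite place and $(0,n)$ elsewhere. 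There is a well-known parity obstruction to finding such a group that is simultaneously quasisplit at all finite places, compact at one infinite place, and of this exact signature. The paper confronts this head-on: it chooses signature either $(1,n-1),(0,n),\dots,(0,n)$ or, when the obstruction bites, $(1,n-1),(1,n-1),(0,n),\dots,(0,n)$, arranging via an extra real quadratic base change that the two $(1,n-1)$ places lie over a single place of the smaller field. In the second case the reflex field drops and $r_{-\mu}\circ\varphi_{\pi_p}$ restricted to $W_{E_\fkp}$ is a twist of $\varphi_{\Pi_v}^{\otimes 2}$, not $\varphi_{\Pi_v}$ itself; purity of a tensor square does not immediately force each Satake parameter to have absolute value $1$, and the paper has to run a separate argument using the classification of unitary generic representations together with the alternating-sign structure of $H^\ast_\xi$ across cohomological degrees.

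A secondary remark: your middle-degree argument deduces concentration from temperedness of $\Pi_\infty$, but the relevant Lie algebra cohomology is that of $\pi_\infty$ on the \emph{unitary} group, not of $\Pi_\infty$ on $\GL_n$. The correct justification is that the regularity of $\Pi_\infty$ forces $\xi$ to have regular highest weight, whence every cohomological $\pi_\infty$ with that infinitesimal character lies in the discrete series $L$-packet. With this fix your purity input is valid, and indeed it is slightly cleaner than the paper's route (the paper first proves Theorem \ref{t:cohom-Sh} only up to sign, uses it to prove temperedness, and then invokes temperedness in Corollary \ref{cor:sign-correct} to pin down the sign). But you still need to address the signature obstruction and, in the tensor-square case, explain how purity of $\varphi_{\Pi_v}^{\otimes 2}$ yields temperedness of $\Pi_v$.
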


Although our theorem gives information in the ramified case as well, we do not see a direct way of proving temperedness at ramified places, as done by Caraiani, \cite{Caraiani}.

Let us explain the strategy of proof. This paper is a sequel to \cite{ScholzeDefSpaces}. We recall that under certain circumstances, the main theorem of \cite{ScholzeDefSpaces} gives an equality of the form
\begin{equation}\label{e:MT1}
\tr(\tau\times hf^p | H^\ast_\xi) = \sum_{\substack{(\gamma_0;\gamma,\delta)\\ \alpha(\gamma_0;\gamma,\delta)=1}} c(\gamma_0;\gamma,\delta) O_\gamma(f^p)TO_{\delta\sigma}(\phi_{\tau,h}) \tr \xi(\gamma_0)
\end{equation}
for the trace of a Hecke operator $hf^p$ twisted with an element of the local Weil group $\tau$ at a prime $\fkp|p$ of the reflex field $E$. We refer to Section \ref{s:proof-Thm1.2}, especially Theorem \ref{MainTheorem1}, for a precise statement. Here, $h\in C_c^\infty(\bG(\Z_p))$ for a certain extension of $\bG$ to a group scheme over $\Z_{(p)}$. There is no naive generalization of this theorem to general $h$, and for this reason our method only gives information about the $\bG(\A_f^p)\times \bG(\Z_p)\times W_{E_\fkp}$-action.

We give a general stabilization of this expression, following the arguments of Kottwitz in \cite{KottwitzStabilization}.
Write $\cE_{\el}(\bG)$ for the set of isomorphism classes of elliptic endoscopic triples for $\bG$.
For $(\bH,s,\eta)\in \cE_{\el}(\bG)$, let $f^{\bH}=f^{\bH,p}f^{\bH}_{\tau,h} f^{\bH}_\xi$
be the function on $\bH(\A)$ obtained from $f^p$, $\phi_{\tau,h}$ and $\xi$.
Here $f^{\bH,p}$ is the usual transfer of $f^p$, whereas
$f^{\bH}_{\tau,h}$ is a twisted transfer of $\phi_{\tau,h}$ relative to a suitable $L$-morphism.
The function $ f^{\bH}_\xi$ at infinity is an explicit linear combination
of pseudo-coefficients for discrete series.
Let $ST^{\bH}_{\el}$ denote the stable distribution on $\bH(\A)$ given as the sum of stable orbital integrals on
elliptic semisimple elements of $\bH(\Q)$.

\begin{thm}\label{MainTheorem2}
The formula \eqref{e:MT1} is stabilized as follows, where the sum runs over  $\cE_{\el}(\bG)$.
$$\tr(\tau\times h f^p|H^{\ast}_{\xi}) = \sum_{(\bH,s,\eta)} \iota(\bG,\bH) ST^{\bH}_{\el} (f^{\bH})$$
\end{thm}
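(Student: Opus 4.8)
The plan is to follow Kottwitz's stabilization procedure from \cite{KottwitzStabilization} essentially verbatim, with the twisted orbital integrals of $\phi_{\tau,h}$ playing the role that the functions $f_p$ (or $\phi_p$) play in the unramified case. The starting point \eqref{e:MT1} expresses $\tr(\tau\times hf^p|H^\ast_\xi)$ as a sum over equivalence classes of triples $(\gamma_0;\gamma,\delta)$ subject to the vanishing of the Kottwitz sign invariant $\alpha(\gamma_0;\gamma,\delta)\in \mathfrak{K}(I_0/\Q)^D$ (or rather its being trivial), with coefficients $c(\gamma_0;\gamma,\delta)$ built from volumes and the cardinality of a certain cohomology set, multiplied by $O_\gamma(f^p)\, TO_{\delta\sigma}(\phi_{\tau,h})\,\tr\xi(\gamma_0)$. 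The first step is to rewrite the constraint $\alpha(\gamma_0;\gamma,\delta)=1$ using Fourier inversion on the finite abelian group $\mathfrak{K}(I_0/\Q)$: replace the indicator of $\alpha=1$ by $|\mathfrak{K}(I_0/\Q)|^{-1}\sum_{\kappa}\langle\alpha(\gamma_0;\gamma,\delta),\kappa\rangle$. This converts \eqref{e:MT1} into a double sum over $(\gamma_0;\gamma,\delta)$ and over characters $\kappa$, and the inner pairing against $\kappa$ is exactly what the theory of endoscopy is designed to match.

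Next, I would reorganize the sum according to Kottwitz's dictionary between pairs $(\gamma_0,\kappa)$ and pairs $(\bH,\gamma_\bH)$, where $(\bH,s,\eta)\in\cE_{\el}(\bG)$ is an elliptic endoscopic triple with $s$ mapping to $\kappa$ and $\gamma_\bH\in\bH(\Q)$ is a semisimple elliptic element that is a norm/image of $\gamma_0$. This is the passage from \S3--\S7 of \cite{KottwitzStabilization}: the coefficient $c(\gamma_0;\gamma,\delta)\langle\alpha(\gamma_0;\gamma,\delta),\kappa\rangle$, summed over the local data $(\gamma,\delta)$ with fixed $(\gamma_0,\kappa)$, factors as $\iota(\bG,\bH)$ times a product of local $\kappa$-(twisted) orbital integrals, and by the Langlands--Shelstad transfer (for the archimedean and away-from-$p$ places) and the twisted fundamental lemma / transfer at $p$ these local $\kappa$-orbital integrals become stable orbital integrals on $\bH$ of the transferred functions $f^{\bH,p}$, $f^{\bH}_{\tau,h}$, and $f^{\bH}_\xi$. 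The archimedean factor requires the usual computation identifying the stable orbital integral of the linear combination of pseudocoefficients with the value $\tr\xi(\gamma_\bH)$ times averaged discrete-series characters, i.e. Kottwitz's $f^{\bH}_\xi$; this is \S7 of \cite{KottwitzStabilization} together with Clozel--Delorme. After this reorganization the right-hand side becomes $\sum_{(\bH,s,\eta)}\iota(\bG,\bH)\sum_{\gamma_\bH} \mathrm{SO}_{\gamma_\bH}(f^{\bH})$, where $\gamma_\bH$ runs over stable conjugacy classes of elliptic semisimple elements of $\bH(\Q)$, which is precisely $\sum_{(\bH,s,\eta)}\iota(\bG,\bH)\, ST^{\bH}_{\el}(f^{\bH})$.

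The twisted transfer at $p$ deserves the most care, and is where I expect the real work to lie. In the unramified setting of \cite{KottwitzStabilization} one uses the base-change fundamental lemma for the Hecke algebra; here $\phi_{\tau,h}$ is the explicit function from \cite{ScholzeDefSpaces} — not spherical and encoding the action of $\tau$ on a nearby-cycles/deformation-space construction — so one must make sense of its twisted endoscopic transfer $f^{\bH}_{\tau,h}$ relative to a chosen unramified $L$-morphism ${}^L\bH\to {}^L(\mathrm{Res}_{F/\Q}\bG_F)$ (or the relevant base-changed group), and verify the matching of $\kappa$-twisted orbital integrals $TO_{\delta\sigma}(\phi_{\tau,h})$ with stable orbital integrals of $f^{\bH}_{\tau,h}$. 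Since at $p$ the group is a product of Weil restrictions of $\GL_n$'s, endoscopy is essentially base change and there is no genuine fundamental lemma obstruction: the needed identities reduce, via the splitting of the place $p$, to base-change character identities for $\GL_n$ in the sense of Arthur--Clozel, applied to the (possibly ramified) function $\phi_{\tau,h}$. Concretely one defines $f^{\bH}_{\tau,h}$ to be \emph{any} function with matching stable orbital integrals and checks this is consistent with the global product formula; existence of such a function follows from the density/transfer results available for $\GL_n$-type groups. One also must track the centralizer groups $I_0$ versus $\bG$ and the passage to inner forms of $\bH$ correctly, handle the bookkeeping of $\mathfrak{K}$-groups and the sign $e(\bG)$ exactly as Kottwitz does, and confirm that the vanishing condition $\alpha=1$ in \eqref{e:MT1} is compatible with the global--local cohomological setup of \cite{KottwitzStabilization} (this is guaranteed by the construction in \cite{ScholzeDefSpaces}, which was designed to mirror Kottwitz's point-counting formula). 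With these pieces in place, the rest is the formal rearrangement of \cite{KottwitzStabilization}, \S4--\S8.
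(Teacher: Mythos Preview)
Your overall architecture is the same as the paper's: start from \eqref{e:MT1}, replace the condition $\alpha(\gamma_0;\gamma,\delta)=1$ by Fourier inversion over $\mathfrak{K}(I_0/\Q)$ to get a sum over $(\gamma_0,\kappa)$, use Kottwitz's dictionary between pairs $(\gamma_0,\kappa)$ and quadruples $(\bH,s,\eta,\gamma_\bH)$ (this is exactly Lemma~\ref{l:transfer-conjugacy}), then transfer locally and collapse to $\sum_{(\bH,s,\eta)}\iota(\bG,\bH)ST^{\bH}_{\el}(f^{\bH})$. At $v\neq p,\infty$ and at $\infty$ your account matches \S\ref{sub:fund-lemma} and \S\ref{sub:construction-infty}.

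The one genuine correction concerns your treatment at $p$. Theorem~\ref{MainTheorem2} is stated for general compact PEL data of type A or C with $\bG_{\Q_p}$ merely \emph{quasisplit}; it is \emph{not} assumed that $p$ is a ``split'' place or that $\bG_{\Q_p}$ is a product of Weil restrictions of $\GL_n$'s. Hence the claim that ``endoscopy is essentially base change'' and that the needed identities ``reduce to base-change character identities for $\GL_n$ in the sense of Arthur--Clozel'' is not valid at this level of generality (and the $L$-morphism need not be unramified). What the paper actually uses at $p$ is:
\begin{itemize}
\item the construction of the twisted endoscopic datum $(\bH,t,\tilde\eta)$ for $(R_r,\theta)$ with $R_r=\Res_{\Q_{p^r}/\Q_p}\bG$ (\S\ref{sub:tw-endos-at-p}), and the existence of a twisted transfer $f^{\bH}_{\tau,h}$ of $\phi_{\tau,h}$ via Ng\^{o}--Waldspurger \cite{Wal08}, giving \eqref{e:SO-p0};
\item Kottwitz's identity \eqref{e:twisted-trans-factor}, $\Delta_p(\gamma_\bH,\delta)=\langle\alpha(\gamma_0;\delta),s\rangle^{-1}\Delta_p(\gamma_\bH,\gamma_0)$, valid because $\bG_{\Q_p}$ is quasisplit, which is what lets the $\kappa$-sum at $p$ become a stable orbital integral on $\bH$ (yielding \eqref{e:SO-p}).
\end{itemize}
With these two ingredients in place of your Arthur--Clozel appeal, the rest of your sketch goes through and is exactly the argument on pp.~188--189 of \cite{KottwitzStabilization} as reproduced in \S\ref{s:proof-Thm1.2}. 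The $\GL_n$ structure at $p$ is only used later in the paper (\S\ref{s:prove-conj}) to establish the character identities of Conjecture~\ref{c:char-id}, not for the stabilization itself.
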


  To apply Theorem \ref{MainTheorem2}, it is essential to understand character identities satisfied by $f^{\bH}_{\tau,h}$.
  We are able to formulate a precise conjecture, cf. Conjecture \ref{c:char-id} and Conjecture \ref{c:char-id-st-Bernstein},
  which is purely local in nature and may be thought of as a common
  generalization of the identities \cite[Thm 1.2.(a)]{ScholzeLLC} and \cite[(9.7)]{KottwitzStabilization}.
  As explained in the introduction of \cite{ScholzeDefSpaces}, it is also related to a conjecture of Kottwitz
  about similar functions arising via the bad reduction of Shimura varieties of parahoric type, and related
  ideas were expressed recently by Haines.

 The conjecture relates (the endoscopic transfers of) the functions $\phi_{\tau,h}$ arising from deformation spaces of
 $p$-divisible groups with the local Langlands Correspondence. In this sense it is similar in spirit to the conjectures
 that describe the cohomology of Rapoport-Zink spaces in terms of the local Langlands Correspondence; the precise
 relation between the two points of view remains to be clarified. Let us only mention here that our conjecture has the
 advantage that all Langlands parameters of $\bH$ occur in the same way, as was the case in \cite{ScholzeLLC}, Theorem 1.2.

  As evidence we verify the conjecture when all data are unramified, cf. Lemma \ref{l:conj-unramified}.
  More importantly, we prove the conjecture in all cases of EL type, cf. Corollary \ref{c:conj-GLn}.
  This extends the result of \cite{ScholzeLLC}, Theorem 1.2, to arbitrary signature. We stress here that we allow ramified EL data.

  Granting this conjecture, we follow the arguments of \cite{KottwitzStabilization} and verify that for any compact PEL type Shimura variety
  of type A or C one gets the expected description of the cohomology, assuming
  some additional hypotheses on the global $A$-packet classification and the stable trace formula, as in \cite{KottwitzStabilization}.
  This justifies the general form of the conjecture.

  Knowing the general character identities satisfied by (the endoscopic transfers of) the test function $\phi_{\tau,h}$, our main theorems
  follow from Theorem \ref{MainTheorem2} and applications of suitable trace formulas.

  We remark on one more application of our arguments in the nonendoscopic case. Namely, we prove that
  for certain compact unitary group Shimura varieties without endoscopy, all Newton strata are nonempty at "split" places in the above sense, cf.
  Corollary \ref{cor:AllPDivGroupsOccur} and Corollary \ref{cor:NewtonNonempty}. In particular, this proves that the
  deformation spaces considered in \cite{ScholzeDefSpaces} are algebraizable in all cases of EL type, cf. Corollary \ref{cor:ELAlg}.

  Let us summarize the content of the different sections. In the first sections, we give the stabilization leading to Theorem \ref{MainTheorem2}. We also explain the process of pseudostabilization in the nonendoscopic case, cf. Section \ref{s:pseudostab}. Afterwards, we state the conjecture about the character identitites satisfied by the endoscopic transfers $f_{\tau,h}^\bH$ of $\phi_{\tau,h}$ in Section \ref{s:char-id}. This will involve the theory of the stable Bernstein center, which we recall in Section \ref{s:stBernstein}. We go on by proving the conjecture in cases of EL type in Section \ref{s:prove-conj}. The last sections apply this result to prove our main theorems. We note that a reader only interested in our results in the nonendoscopic case finds the argument in Section \ref{s:pseudostab}, the first half of Section \ref{s:prove-conj}, and Section \ref{s:lgc}.

{\bf Acknowledgments.} This work was started as a result of a discussion at the IAS in Princeton during the special year on Galois representations and automorphic forms. The authors want to thank this institution for the hospitality, and also thank T. Haines for related discussions. The first-named author wants to thank R. Taylor for the invitation to speak there. Moreover, he thanks M. Rapoport for his constant encouragement and help. The second-named author wishes to thank D. Kazhdan and D. Vogan for answering his questions. This work was written while the first-named author was a Clay Research Fellow. The second-named author's work was supported by the National Science Foundation during his stay at the Institute for Advanced Study under agreement No. DMS-0635607.\footnote{Any opinions, findings and conclusions or recommendations expressed in this material are those of the author(s) and do not necessarily reflect the views of the National Science Foundation.}

\section{Kottwitz triples}\label{s:Kottwitz-triples}

  The definitions of this section are group-theoretic
  and do not require any reference to Shimura varieties even though
  that is where the motivation comes from.
  Let $\bG$ be a connected reductive group over $\Q$ with $A_{\bG}$ the maximal split component of its center such that
\begin{enumerate}
\item $(\bG/A_{\bG})(\R)$ contains a compact maximal torus,
\item $\bG^{\der}$ is simply connected,
\item $\bG_{\Q_p}$ is quasisplit.
\end{enumerate}
  Condition (1) is indispensable. Both (2) and (3) are not essential, but are assumed to avoid complication.

  Let $\bh:\Res_{\C/\R}\G_m \ra \bG_\R$ be a group morphism. Taking the basechange to $\C$, this gives rise to a morphism $\G_m\times \G_m\ra \bG_\C$, and we denote the first factor by $\mu: \G_m\ra \bG_\C$. Choosing a maximal torus $\bT$ of $\bG$ over $\C$, one can replace $\mu$ by a conjugate which factors over $\bT$; this induces by duality a cocharacter of $\hat{\bT}$. Restricting this cocharacter to $Z(\hat{\bG})$ gives an element $\mu_1\in X^\ast(Z(\hat{\bG}))$ independent of all choices\footnote{In particular, we revert back to the normalization used by Kottwitz, which is different from the one used in \cite{ScholzeDefSpaces}.}.

 The $\bG(\C)$-conjugacy class $\ol{\mu}$ of $\mu$ has its field of definition $E\subset \C$, which is finite over $\Q$. We fix a prime $\fkp$ of $E$ above $p$.
  Let $f(\fkp)$ denote the inertia degree of $E_\fkp$ over $\Q_p$.

  Set $L$  to be the fraction field of $W(\ol{\F}_p)$. Write $B(\bG_{\Q_p})$ for the set of $\sigma$-conjugacy classes in $\bG(L)$.
  Let
  \[
  \kappa_{\bG,p}:B(\bG_{\Q_p})\ra X^*(Z(\hat{\bG})^{\Gamma(p)})
  \]
  denote the Kottwitz map, cf. \cite{KottwitzStabilization}, Section 6.
  Here $\Gamma(p)$ denotes the absolute Galois group of $\Q_p$.
  Use $\bG(\Q)_{\semis}$ to denote the set of semisimple elements in $\bG(\Q)$.

\begin{definition}Let $j\geq 1$ be an integer, and set $r=j f(\fkp)$.
  A degree $j$ Kottwitz triple $(\gamma_0;\gamma,\delta)$ consists of
  $\gamma_0\in \bG(\Q)_{\semis}$, $\gamma\in \bG(\A^p_f)$ and $\delta\in \bG(\Q_{p^{r}})$ such that
\begin{itemize}
  \item $\gamma_0$ is elliptic in $\bG(\R)$,
  \item $\gamma_0$ and $\gamma$ are stably conjugate in $\bG(\Q_v)$ for all $v\neq p,\infty$,
  \item $\gamma_0$ and $N\delta=\delta \delta^{\sigma}\cdots \delta^{\sigma^{r-1}}$
  are stably conjugate, i.e. conjugate in $\bG(\ol{\Q}_p)$,
  \item $\kappa_{\bG,p}(\delta)=-\mu_1$ in $X^*(Z(\hat{\bG})^{\Gamma(p)})$.
\end{itemize}
  Two triples $(\gamma_0;\gamma,\delta)$ and $(\gamma'_0;\gamma',\delta')$ are considered equivalent
  if $\gamma_0\sim_{\st} \gamma'_0$, $\delta\sim_{\sigma} \delta'$ and
  $\gamma_v\sim \gamma'_v$ for all $v\neq p,\infty$.
  Denote by $\KT_j$ (a set of representatives for) equivalence classes of degree $j$ Kottwitz triples.

\end{definition}

  Let $I_0$ be the centralizer of $\gamma_0$ in $\bG$. Kottwitz defines
  an invariant $\beta_v(\gamma_0;\gamma,\delta)\in X^*(Z(\hat{I}_0)^{\Gamma(v)}Z(\hat{\bG}))$
  for each place $v$ of $\Q$ in \cite{KottwitzStabilization}, Section 2.
  Set $\tilde{\fkK}(I_0/\Q)=\cap_v Z(\hat{I}_0)^{\Gamma(v)}Z(\hat{\bG})$.
  The product
  $$\beta(\gamma_0;\gamma,\delta)=\prod_v \beta_v(\gamma_0;\gamma,\delta)|_{\tilde{\fkK}(I_0/\Q)}
  \in X^*(\tilde{\fkK}(I_0/\Q))$$
  is trivial on $Z(\hat{\bG})$ by construction, hence defines a character
  $\alpha(\gamma_0;\gamma,\delta)$
  of $\fkK(I_0/\Q)=(\cap_v Z(\hat{I}_0)^{\Gamma(v)}Z(\hat{\bG}))/Z(\hat{\bG})$.

\section{Endoscopic transfer of test functions}\label{s:transfer}

  In this section we define the test functions needed to stabilize the formula given in
  Theorem \ref{MainTheorem1} below, following Kottwitz.

  First, we recollect some general facts about endoscopy. Let $F$ be a local
  or global field of characteristic 0.
  Let $\bG$ be a connected reductive group over $F$. As in the last section, we assume for convenience that
  \begin{itemize}
    \item  $\bG^{\der}$ is simply connected.
  \end{itemize}
  Let $(\bH,s,\eta)\in \cE(\bG)$ be an endoscopic triple consisting
  of a quasi-split connected reductive group $\bH$ over $F$, an element $s\in Z(\hat{\bH})$ and
  an embedding of complex Lie groups $\eta:\hat{\bH}\ra \hat{\bG}$ satisfying the conditions
  of \cite[7.4]{Kot84a}. Such a triple is elliptic if $(Z(\hat{\bH})^{\Gamma})^0\subset Z(\hat{\bG})$.
  The notion of isomorphism for endoscopic triples is defined in \cite[\S7]{Kot84a}.
  We use the Weil form of the $L$-group, and fix an $L$-morphism ${}^L \bH \ra {}^L \bG$
  extending $\eta$, which exists since $\bG^{\der}$ is simply connected (\cite[Prop 1]{Lan79}). The latter $L$-morphism
  is also denoted $\eta$ by abuse of notation.
  Write $\cE(\bG)$ (resp. $\cE_{\el}(\bG)$)
  for the set of isomorphism classes of endoscopic (resp. elliptic endoscopic)
  triples for $\bG$.

\subsection{Places $v\neq p,\infty$: Untwisted endoscopy}\label{sub:fund-lemma}

  Now assume that $F$ is a local field.
  Langlands and Shelstad (\cite{LS87})
  define a transfer factor $$\Delta:\bH(F)_{\semis,\GHreg}\times \bG(F)_{\semis} \ra \C,$$
  which is canonical up to a nonzero constant.
  Some basic properties are that $\Delta(\gamma_{\bH},\gamma)$ depends only on the stable
  conjugacy class of $\gamma_H$ and the $\bG(F)$-conjugacy class of $\gamma$ and that
  $\Delta(\gamma_{\bH},\gamma)=0$ unless $\gamma_{\bH}$ is $(\bG,\bH)$-regular and associated to $\gamma$.
  The fundamental lemma and the transfer conjecture, which are now theorems due to
  Ng\^{o}, Waldspurger and others (see \cite{Wal97}, \cite{Ngo10} and references therein), assert

\begin{thm}\label{t:Fund-Lemma}
  For each $f\in C^\infty_c(\bG(F))$, there exists a $f^{\bH}\in  C^\infty_c(\bH(F))$ such that
  for every $\gamma_{\bH}\in \bH(F)_{\semis,\GHreg}$,
  $$SO^{\bH(F)}_{\gamma_{\bH}}(f^{\bH})= \sum_{\gamma\in\bG(F)_{\semis}/\sim }
   \Delta(\gamma_{\bH},\gamma) e(\bG_\gamma) O^{\bG(F)}_\gamma(f)$$
where $\gamma$ runs over the set of representatives for all conjugacy classes in $\bG(F)_{\semis}$ and $\bG_\gamma$ denotes the centralizer of $\gamma$ in $\bG$. Now assume that $\bH$, $\bG$ and $\eta$ are unramified and that $f\in \cH^{\ur}(\bG(F))$.
Then $\Delta$ can be normalized such that $f^{\bH}=\eta^*(f)$ satisfies the above formula, where $\eta^*: \cH^{\ur}(\bG(F))\ra \cH^{\ur}(\bH(F))$ is
the natural induced morphism of unramified Hecke algebras. In particular, if $f$ is the idempotent associated to a hyperspecial maximal compact subgroup,
then one may take $f^{\bH}$ as the idempotent of a hyperspecial maximal compact subgroup as well.
\end{thm}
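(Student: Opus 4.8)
The statement is the combination of three now-classical results, so the "proof" is really an assembly of citations, and the plan is to make that assembly explicit. First I would reduce to the case where $\bG$ has simply connected derived group, which is exactly the running hypothesis in this subsection; this is what makes the transfer factor $\Delta$ of Langlands--Shelstad \cite{LS87} well-defined (up to the usual nonzero constant) and is what lets us fix an $L$-morphism $\eta:{}^L\bH\to{}^L\bG$ by \cite[Prop.~1]{Lan79}. With these data fixed, the first assertion — existence of $f^{\bH}\in C^\infty_c(\bH(F))$ with the prescribed stable orbital integrals — is precisely the Langlands--Shelstad transfer conjecture. I would invoke the theorem of Ngô \cite{Ngo10} for the fundamental lemma for the unit element, together with Waldspurger's reductions \cite{Wal97} (fundamental lemma for the full Hecke algebra $\Rightarrow$ transfer conjecture, and the Langlands--Shelstad transfer in residue characteristic zero, plus Hales' and Waldspurger's work on the transfer at all places), which together give the transfer conjecture in the generality stated. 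One technical point worth flagging: the Langlands--Shelstad factor $\Delta$ is only canonical up to a constant, so the identity is really an identity of the form displayed once that constant has been absorbed into the normalization of $\Delta$; I would state this dependence explicitly so the later global arguments, where the product of local constants must be $1$, are not disrupted.

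For the second assertion — the unramified case — the plan is to recall that when $\bH$, $\bG$ and $\eta$ are unramified over $F$, one fixes hyperspecial maximal compact subgroups and the Satake isomorphism identifies $\cH^{\ur}(\bG(F))$ with $\C[\hat{\bG}^{\Gamma}]$-invariants (more precisely with $R(\hat{\bG}\rtimes\mathrm{Frob})$); the $L$-morphism $\eta$ then induces a ring homomorphism $\eta^\ast:\cH^{\ur}(\bG(F))\to\cH^{\ur}(\bH(F))$ on the spectral side. The content of the statement is that, for the canonical normalization of $\Delta$ in the unramified setting (the Hales normalization, pinned down so that $\Delta$ takes the value $1$ on a suitable base point coming from the hyperspecial vertices), the function $\eta^\ast(f)$ is a valid transfer of $f$ for every $f\in\cH^{\ur}(\bG(F))$. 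This is exactly the fundamental lemma for the spherical Hecke algebra; it follows from Ngô's theorem for the unit combined with Waldspurger's theorem \cite{Wal06} that the fundamental lemma for the unit implies it for the whole algebra (one can also cite Hales for the reduction). For the final sentence, I would specialize $f$ to the unit $\mathbf{1}_{K}$ of $\cH^{\ur}(\bG(F))$ at a hyperspecial $K$; then $\eta^\ast(\mathbf{1}_K)=\mathbf{1}_{K_{\bH}}$ for a hyperspecial $K_{\bH}\subset\bH(F)$, because $\eta^\ast$ is the Satake-transform of the map dual to $\eta$ and sends the identity of the representation ring to the identity — this is the original Langlands--Shelstad / Kottwitz form of the fundamental lemma.

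The main obstacle is not mathematical but expository: each of the three statements (transfer conjecture, fundamental lemma for the unit, fundamental lemma for the Hecke algebra) is normalized slightly differently in the literature, and the identity as displayed hides a choice of normalization of $\Delta$ (and of Haar measures, and of the factor $e(\bG_\gamma)$, which is the Kottwitz sign of the inner form $\bG_\gamma$). So the real work in the plan is bookkeeping: fix compatible Haar measures on $\bG(F)$, $\bH(F)$ and all the centralizers; fix the normalization of $\Delta$ so that in the unramified case it is the Hales normalization; and check that with these choices the three cited theorems combine into the single statement above. I expect no genuine difficulty beyond this, since for the application all data at the places $v\neq p,\infty$ are either of the unramified type (where $\eta^\ast$ is explicit) or covered directly by Ngô--Waldspurger, and the global product formula for the transfer-factor constants is handled in \cite{KottwitzStabilization} and will be cited there rather than reproved here.
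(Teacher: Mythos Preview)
Your proposal is correct and takes essentially the same approach as the paper: the paper does not prove this theorem at all but simply states it as a known result, attributing it to Ng\^{o}, Waldspurger and others with a parenthetical ``(see \cite{Wal97}, \cite{Ngo10} and references therein)'' immediately before the statement. Your assembly of citations is considerably more detailed than what the paper provides, but the underlying strategy---cite the transfer conjecture and fundamental lemma as established results rather than reprove them---is identical.
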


  We will say that $f^{\bH}$ as above is an ($\eta$-)transfer of $f$.

\begin{rem}\label{r:LS-normalization}
  Our transfer factor follows the convention of \cite{LS87} and \cite{KS99}, which are
  opposite to the one of \cite{KottwitzStabilization}. (See \cite[p.178]{KottwitzStabilization}
  and \cite[p.201]{Mor10}.) This introduces the sign change $s\mapsto s^{-1}$
  when citing results from the latter.
\end{rem}

\subsection{Place $p$: Twisted endoscopy}\label{sub:tw-endos-at-p}

  Let us recall the setup in which functions $\phi_{\tau,h}$ were defined in \cite{ScholzeDefSpaces}. In particular, we have a quasisplit
  reductive group $\bG$ over $\Q_p$ and a conjugacy class $\overline{\mu}$ of cocharacters
  $\mu: \mathbb{G}_m\ra \bG_{\bar{\Q}_p}$ with field of definition $E\subset \bar{\Q}_p$.\footnote{Here, we switch to Kottwitz'
  normalization of $\mu$: Let $\mu_{RZ}$ denote the $\mu$ considered in \cite{ScholzeDefSpaces}; then the product
  $\mu\mu_{RZ}$ is to be the central morphism $\G_m\ra \bG$ that sends $t\in \G_m$ to multiplication by $t$ on $V$.}
  We get $\mu_1\in X^*(Z(\hat{\bG}))$.
  Let $f$ be the inertial degree of the extension $E/\Q_p$, and let $r=jf$ for some $j\geq 1$. In \cite{ScholzeDefSpaces}, certain functions
  $\phi_{\tau,h}\in C_c^\infty(\bG(\Q_{p^r}))$ are defined, depending on $\tau$ in the Weil group of $E$, acting as the $j$-th power
  of geometric Frobenius on the residue field, and $h\in C_c^\infty(\bG(\Z_p))$ for a suitable integral model of $\bG$.
  One important property is that
  \begin{itemize}
  \item $\phi_{\tau,h}(\delta) = 0$ unless $\kappa_{\bG}(\delta) = - \mu_1$ in $X^\ast(Z(\hat{\bG})^{\Gamma(p)})$.
  \end{itemize}
  Here $\kappa_{\bG}: B(\bG)\ra X^*(Z(\hat{\bG})^{\Gamma(p)})$ is the Kottwitz morphism as above.

  We want to define the twisted endoscopic transfers of $\phi_{\tau,h}$. Let $(\bH,s,\eta)\in \cE(\bG_{\Q_p})$ be any endoscopic triple.
  First, we are going to give the construction of
  the endoscopic transfer $f^{\bH}_{\tau,h}$ of the function $\phi_{\tau,h}$ when $s\in Z(\hat{\bH})^{\Gamma(p)}$.
  It will be explained at the end of this subsection what to do in the general case where
  $s\in Z(\hat{\bH})^{\Gamma(p)}Z(\hat{\bG})$.
  Let $R_r=\Res_{\Q_{p^r}/\Q_p} \bG_{\Q_{p^r}}$, equipped with the obvious automorphism
  $\theta$ such that $R_r^{\theta}=\bG$.
  We have an identification $\hat{R}_r=\hat{\bG}\times \cdots\times \hat{\bG}$ ($r$ copies)
  on which any lift of the arithmetic Frobenius $\sigma$ acts by
  $(g_1,...,g_r)\mapsto (\sigma(g_2),...,\sigma(g_r),\sigma(g_1))$. In particular,
  there is a natural $L$-morphism $\xi: {}^L \bG\ra {}^L R_r$ sending $g\rtimes w\in {}^L \bG$ to
  $(g,\ldots,g)\rtimes w\in {}^L R_r$. Choose elements $t_i\in Z(\hat{\bH})^{\Gamma(p)}$ for $i=1,...,r$ such that
  $t_1t_2\cdots t_r=s$ and write $t=(t_1,...,t_r)$.
  Consider the $L$-morphism
  \begin{equation}\label{e:L-morphism-at-p}
    \tilde{\eta}:{}^L \bH \ra {}^L R_r
  \end{equation}
  defined by $\tilde{\eta}(h)= \xi(\eta(h))$ for $h\in \hat{\bH}$ and $\tilde{\eta}(\tilde{\sigma})=t\xi(\eta(\tilde{\sigma}))$
  for any lift $\tilde{\sigma}\in {}^L \bH$ of $\sigma$.
  Then $(\bH,t,\tilde{\eta})$ is a twisted endoscopic
  group for $(R_r,\theta)$.

  Ng\^{o} and Waldspurger (see \cite{Wal08}) showed
  that the twisted endoscopic transfer exists. The implication in our situation is that
  there exists a function $f^{\bH}_{\tau,h}\in C^\infty_c(\bH(\Q_p))$ such that
  for all $(\bG,\bH)$-regular semisimple $\gamma_{\bH}\in \bH(\Q_p)$,
 \begin{equation}
\label{e:SO-p0}
SO^{\bH(\Q_p)}_{\gamma_{\bH}}(f^{\bH}_{\tau,h})= \sum_{\delta\in \bG(\Q_{p^r})/\sim_\sigma}
\Delta_p(\gamma_{\bH},\delta)
  e(\bG_{\delta\sigma}) TO_{\delta\sigma}(\phi_{\tau,h}).\end{equation}
 where $\Delta_p$ denotes the twisted transfer factor, which is nonzero exactly when
 the twisted conjugacy class of $\delta$
 transfers to the stable conjugacy class of $\gamma_{\bH}$, and $\bG_{\delta\sigma}$ is the
 $\sigma$-centralizer of $\delta$ in $\bG$.

 As in our situation $\bG$ is quasi-split over $\Q_p$, more can be said about the transfer factors.
 Kottwitz showed (\cite[Appendix]{Mor10}) that
 if canonical transfer factors (\cite[A.1.5]{Mor10}) are used then
\begin{equation}
  \label{e:twisted-trans-factor}
  \Delta_p(\gamma_{\bH},\delta)= \lg \alpha(\gamma_0;\delta),s\rg^{-1}
  \Delta_p(\gamma_{\bH},\gamma_0),
\end{equation}
provided that $\gamma_0\in \bG(\Q_p)_{\semis}$ and $\gamma_{\bH}$ have
matching stable conjugacy classes. Such a $\gamma_0$ exists since $\bG$ is quasi-split over $\Q_p$.
Henceforth we will always choose $\Delta_p(\gamma_{\bH},\delta)$ and
  $\Delta_p(\gamma_{\bH},\gamma_0)$ to be canonical transfer factors.

  For later use, we recall the definition of $ \alpha(\gamma_0;\delta)$ here, cf. \cite[p.167]{KottwitzStabilization}.
  Let us write $I_0$ for the centralizer of $\gamma_0$ in $\bG$ over $\Q_p$.
  We have $I_0\subset \bH$ since $\gamma_0$ is $(\bG,\bH)$-regular.
  Recall that the set of $\sigma$-conjugacy classes in $I_0(L)$ is denoted $B(I_0)$.
  Using Steinberg's theorem, choose any $d\in \bG(L)$ such that $\delta \delta^{\sigma}\cdots \delta^{\sigma^{r-1}}
  = d \gamma_0 d^{-1}$. Then it turns out that $d^{-1}\delta d^{\sigma}$ belongs to $I_0(L)$
  and yields a well-defined element of $B(I_0)$.
  The image of the latter element under the Kottwitz map
  $\kappa_{I_0}:B(I_0)\ra X^*(Z(\hat{I_0})^{\Gamma(p)})$ is called $\alpha(\gamma_0;\delta)$.

  Putting this together, we get
  \begin{equation}\label{e:SO-p}
SO^{\bH(\Q_p)}_{\gamma_{\bH}}(f^{\bH}_{\tau,h})= \sum_{\delta\in \bG(\Q_{p^r})/\sim_\sigma}
 \lg \alpha(\gamma_0;\delta),s\rg^{-1} \Delta_p(\gamma_{\bH},\gamma_0)
  e(\bG_{\delta\sigma}) TO_{\delta\sigma}(\phi_{\tau,h}).\end{equation}

  So far we treated the case where $s\in Z(\hat{\bH})^{\Gamma(p)}$.
  In general, we may write $s=s_0 z$ for $s_0\in Z(\hat{\bH})^{\Gamma(p)}$ and $z\in Z(\hat{\bG})$.
  At this point, we remark that the twisted orbital integrals $TO_{\delta\sigma}(\phi_{\tau,h})$ vanish
  unless $\kappa_{\bG}(\delta) = - \mu_1$ in $X^\ast(Z(\hat{\bG})^{\Gamma(p)})$.
  This allows us to define $\beta(\gamma_0;\delta)
  \in X^*(Z(\hat{I_0})^{\Gamma(p)}Z(\hat{\bG}))$ as the extension of $\alpha(\gamma_0;\delta)$
  which is $-\mu_1$ on $Z(\hat{\bG})$.
  Then $\lg \beta(\gamma_0;\delta),s\rg^{-1}
  = \mu_1(z)\lg \beta(\gamma_0;\delta),s_0\rg^{-1}$.
  We define $f^{\bH}_{\tau,h}$ relative to $(\bH,s,\eta)$ as
  $\mu_1(z)$ times the function $f^{\bH}_{\tau,h}$ relative to $(\bH,s_0,\eta)$
  as constructed above, so that \eqref{e:SO-p} still holds if one restricts the sum to $\delta$
  satisfying $\kappa_{\bG}(\delta) = -\mu_1$ and replaces $\alpha(\gamma_0;\delta)$ by $\beta(\gamma_0;\delta)$.

\subsection{Place $\infty$: Pseudocoefficients}\label{sub:construction-infty}

  In this subsection, we assume that $F=\R$. Moreover, we fix an algebraic representation $\xi$ of $\bG$ over $\C$.
  Assume that $(\bH,s,\eta)$ is an elliptic endoscopic triple of $\bG$. We recall a construction of Kottwitz
  based on Shelstad's theory of real endoscopy.

  For a discrete series representation $\pi_\bH$ of $\bH(\R)$, write
  $\phi_{\pi_\bH}$ for its pseudo-coefficient. For any discrete
  $L$-parameter $\varphi_\bH:W_\R\ra {}^L \bH$, write
  $\Pi^2(\varphi_\bH)$ for its associated $L$-packet. Set
  $\phi_{\varphi_\bH}=|\Pi^2(\varphi_\bH)|^{-1} \sum_{\pi_{\bH}\in \Pi^2(\varphi_\bH)}
  \phi_{\pi_\bH}$.
  Write $\varphi_\xi:W_\R\ra {}^L \bG $ for the discrete $L$-parameter
  such that $\Pi^2(\varphi_\xi)$ consists of
  discrete series with the same central character and infinitesimal character as $\xi^\vee$.
  Define $\Phi_{\bH}(\varphi_\xi)$ to be the set of discrete $\varphi_\bH$
  such that $\eta\varphi_\bH$ and $\varphi_\xi$ are equivalent. Set
  $$f^{\bH}_{\xi}=
  \lg \mu_\bh, s \rg^{-1} \sum_{\varphi_\bH\in\Phi_{\bH}(\varphi_\xi)} (-1)^{q(\bG)}\det(\omega_*(\varphi_{\bH})) \phi_{\varphi_\bH}$$
  where $\det(\omega_*(\varphi_{\bH}))$ is defined on pp.184-185 and $\lg \mu_\bh,s\rg^{-1}$
  is defined on p.185 of \cite{KottwitzStabilization}. Kottwitz shows that for every $\gamma_{\bH}\in \bH(\R)_{\semis,\GHreg}$,
  \begin{equation}
\label{e:SO-infty}
  SO^{\bH(\R)}_{\gamma_{\bH}}(f^{\bH}_{\xi})
  = \lg \beta_\infty(\gamma_0), s \rg^{-1}\Delta_\infty(\gamma_{\bH},\gamma_0)
  e(I) \tr \xi(\gamma_0) \vol(A_{\bG}(\mathbb{R})^\circ\bs I(\R))^{-1}
\end{equation}
if $\gamma_{\bH}$ is elliptic and $SO^{\bH(\R)}_{\gamma_{\bH}}(f^{\bH}_\infty)=0$ otherwise, cf. \cite[(7.4), page 186]{KottwitzStabilization}.
Here $\Delta_\infty(\gamma_{\bH},\gamma_0)$  is as on page 184 of \cite{KottwitzStabilization}, and
$I$ is the inner form of the centralizer $I_0$ of $\gamma_0$ in $\bG$ that is anisotropic modulo center.
Moreover Proposition 3.3.4 (cf. Remark 6.2.2) of \cite{Mor10} implies that
  \begin{equation}
\label{e:SO-infty-vanish}
  SO^{\bH(\R)}_{\gamma_{\bH}}(f^{\bH}_{\xi}) = 0
\end{equation}
if $\gamma_{\bH}\in \bH(\R)_{\semis}$ is not $(\bG,\bH)$-regular.

\section{Stabilization - proof of Theorem \ref{MainTheorem2}}\label{s:proof-Thm1.2}

  Let us first briefly recall the main theorem from \cite{ScholzeDefSpaces}.
  \\
  Let $B$ be a simple $\mathbb{Q}$-algebra with center $F$ and maximal $\mathbb{Z}_{(p)}$-order $\mathcal{O}_B$ that is stable under a positive involution $\ast$ on $B$. Let $V$ be a finitely generated left $B$-module with a nondegenerate $\ast$-hermitian form $(\, ,\, )$. We assume that there is an $\mathcal{O}_B$-stable selfdual $\mathbb{Z}_{(p)}$-lattice $\Lambda\subset V$, which we fix. Moreover, we let $F_0 = F^{\ast = 1}$, which is a totally real field. We assume that at all places above $p$, the extension $F/F_0$ is unramified and the $F$-algebra $B$ is split.

We let $C=\mathrm{End}_B(V)$, and $\mathcal{O}_C = \mathrm{End}_{\mathcal{O}_B}(\Lambda)$; both carry an involution $\ast$ induced from $(\, ,\, )$. We recall, cf. \cite{KottwitzPoints}, p. 375, that over $\bar{\mathbb{Q}}$, the algebra $C$ together with the involution $\ast$ is of one of the following types:
\begin{enumerate}
\item[(A)] $M_n\times M_n^{\mathrm{opp}}$ with $(x,y)^\ast = (y,x)$,
\item[(C)] $M_{2n}$ with $x^\ast$ being the adjoint of $x$ with respect to a nondegenerate alternating form in $2n$ variables,
\item[(D)] $M_{2n}$ with $x^\ast$ being the adjoint of $x$ with respect to a nondegenerate symmetric form in $2n$ variables.
\end{enumerate}
We assume that case $A$ or $C$ occurs. We get the reductive group $\mathbf{G}/\mathbb{Q}$ of $B$-linear similitudes of $V$; in fact, we can extend it to an algebraic group over $\mathbb{Z}_{(p)}$ as the group representing the functor
\[
\mathbf{G}(R) = \{g\in (\mathcal{O}_C\otimes_{\mathbb{Z}_p} R)^\times \mid gg^\ast\in R^\times\}\ .
\]
Finally, we fix a homomorphism $\mathbf{h}_0: \mathbb{C}\rightarrow C\otimes \mathbb{R}$ such that $\mathbf{h}_0(\overline{z})=\mathbf{h}_0(z)^\ast$ for all $z\in \mathbb{C}$, and such that the symmetric real-valued bilinear form $(v,\mathbf{h}_0(i)w)$ on $V\otimes \mathbb{R}$ is positive definite.
\\

We write $\mathbf{h}$ for the map $\mathbb{S}\rightarrow \mathbf{G}\otimes \mathbb{R}$ from Deligne's torus $\mathbb{S}$ (i.e., the algebraic torus over $\mathbb{R}$ with $\mathbb{S}(\mathbb{R}) = \mathbb{C}^\times$) that is given on $\mathbb{R}$-valued points by $\mathbf{h}(z) = \mathbf{h}_0(z)$, $z\in\mathbb{C}^\times$. Then one gets a tower $\mathrm{Sh}_K$, $K\subset \mathbf{G}(\mathbb{A}_f)$ running through compact open subgroups of the adelic points of $\mathbf{G}$, of Shimura varieties associated to the pair $(\mathbf{G},\mathbf{h}^{-1})$.

Of course, we get $\mu$ and the reflex field $E$ as before, and $\mathrm{Sh}_K$ has a canonical model defined over $E$. We fix a prime $\mathfrak{p}$ of $E$ above $p$.

Finally, fix an algebraic representation $\xi$ of $\bG$, defined over a number field $L$, and let $\lambda$ be a place of $L$ lying over the rational prime $\ell\neq p$. This gives $\ell$-adic local systems $\mathcal{F}_{\xi,K}$ on $\mathrm{Sh}_K$ to which the action of the Hecke operators extend. Now we define
\[
H^\ast_{\xi} = \varinjlim_K H^\ast(\mathrm{Sh}_K\otimes \bar{\mathbb{Q}},\mathcal{F}_{K,\xi})
\]
as a virtual representation of $\Gal(\bar{\mathbb{Q}}/E)\times \bG(\A_f)$.

We make the assumption that the integral models defined in \cite{ScholzeDefSpaces} are proper; in particular, the Shimura varieties themselves need to be proper, i.e. $\bG$ needs to be anisotropic modulo center.

  We choose Haar measures on various groups exactly as in \cite[\S5.1]{Mor10}, cf. also \cite[\S3]{KottwitzStabilization}.
  We merely remark that adelic groups are given the Tamagawa measure and that
  the local measures at nonarchimedean places assign rational numbers to open compact subgroups.
  Moreover, in our situation we have defined a model of our group over $\mathbb{Z}_p$, and require that $\mathbf{G}(\mathbb{Z}_p)$ and $\mathbf{G}(\mathbb{Z}_{p^r})$ get volume $1$.

  Now we have the following theorem from \cite{ScholzeDefSpaces}.

  \begin{thm}\label{MainTheorem1} Let $\tau\in W_{E_\fkp}$ project to the $j$-th power of geometric Frobenius, for some $j\geq 1$. Let $h\in C_c^\infty(\bG(\Z_p))$ and $f^p\in C_c^\infty(\bG(\A_f^p))$. Then
  \[
  \tr(\tau\times hf^p|H^\ast_\xi) = \sum_{\substack{(\gamma_0;\gamma,\delta)\in \KT_j\\ \alpha(\gamma_0;\gamma,\delta) = 1}} c(\gamma_0;\gamma,\delta) O_\gamma(f^p) TO_{\delta\sigma}(\phi_{\tau,h}) \tr \xi(\gamma_0)\ ,
  \]
  where $c(\gamma_0;\gamma,\delta)$ is a volume constant defined as in \cite[p. 172]{KottwitzStabilization}.
  \end{thm}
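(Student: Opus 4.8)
The plan is to deduce this from the main theorem of \cite{ScholzeDefSpaces}, after reconciling the two normalization conventions. That paper establishes, for the proper integral model $\Sh_K$ over $\cO_{E_\fkp}$ and the $\ell$-adic local systems $\cF_{\xi,K}$, an identity of exactly the shape stated, the sum running over the degree $j$ Kottwitz triples whose obstruction invariant $\alpha$ vanishes (equivalently, those that actually contribute points to the special fibre). Two bookkeeping points must be verified. First, \cite{ScholzeDefSpaces} works with the cocharacter $\mu_{RZ}$ rather than Kottwitz' $\mu$ used in Section~\ref{s:Kottwitz-triples}, the two characterized by $\mu\,\mu_{RZ}$ being the central cocharacter $\G_m\to\bG$, $t\mapsto(v\mapsto tv)$, of $V$; this alters $\mu_1\in X^*(Z(\hat{\bG}))$ by a central character, hence shifts the defining condition $\kappa_{\bG,p}(\delta)=-\mu_1$ of $\KT_j$ and the normalization of $\phi_{\tau,h}$, so one must transport the statement of \emph{loc.\ cit.}\ along this dictionary. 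Second, one uses the transfer-factor and sign conventions of Remark~\ref{r:LS-normalization}. Granting these, the formula of \cite{ScholzeDefSpaces} becomes the one displayed, with $c(\gamma_0;\gamma,\delta)$ the volume constant of \cite[p.~172]{KottwitzStabilization}. It then remains only to observe that our PEL datum of Section~\ref{s:proof-Thm1.2} --- case $A$ or $C$, $B$ split and $F/F_0$ unramified above $p$, integral models assumed proper --- meets the hypotheses of \emph{loc.\ cit.}; this is immediate from the setup.

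For orientation, recall the structure of the proof in \cite{ScholzeDefSpaces}. By properness and proper base change, $H^*_\xi$ is computed by the nearby-cycle complex $R\Psi\,\cF_{\xi,K}$ on $\Sh_{K,\bar{\F}_p}$, so the Grothendieck--Lefschetz trace formula expresses $\tr(\tau\times hf^p\mid H^*_\xi)$ as a sum over the points of the special fibre fixed by the correspondence $\tau\times hf^p$, each weighted by the local term of $R\Psi\,\cF_{\xi,K}$ at that point. The Langlands--Kottwitz description of this fixed-point set identifies it, up to the usual equivalence, with the triples $(\gamma_0;\gamma,\delta)\in\KT_j$ with $\alpha=1$; for a fixed triple the contribution of the prime-to-$p$ and archimedean data assembles into $c(\gamma_0;\gamma,\delta)\,O_\gamma(f^p)$, and the coefficient system contributes $\tr\xi(\gamma_0)$. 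The genuinely new ingredient is the identification of the summed local terms at $p$, over the $p$-component of a fixed triple, with the twisted orbital integral $TO_{\delta\sigma}(\phi_{\tau,h})$: the function $\phi_{\tau,h}$ is constructed precisely so that its twisted orbital integrals compute the trace of $\tau$ on the cohomology of the relevant deformation spaces of $p$-divisible groups equipped with $h$-level structure.

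The main obstacle --- and the heart of \cite{ScholzeDefSpaces} --- is this last step: passing from the nearby-cycle local terms on a possibly badly ramified integral model to an explicit function on $\bG(\Q_{p^r})$ whose twisted orbital integrals capture them. The point-count away from $p$ and the Lefschetz formalism follow Kottwitz' treatment of the good-reduction case, and the normalization translation, though it must be carried out carefully, is routine. Since here we only quote the statement, no further work is required beyond that translation and the verification that the hypotheses of \emph{loc.\ cit.}\ hold in our setting.
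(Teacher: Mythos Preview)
Your proposal is correct and matches the paper's own treatment: the paper does not prove this statement but simply records it as ``the following theorem from \cite{ScholzeDefSpaces},'' after setting up the PEL datum and noting (in a footnote in \S\ref{sub:tw-endos-at-p}) the switch from $\mu_{RZ}$ to Kottwitz' $\mu$. Your write-up supplies more detail on the normalization dictionary and a sketch of the argument in \emph{loc.\ cit.}, which is helpful but goes beyond what the paper itself provides.
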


  We remark that a priori these expressions are numbers in an algebraic closure of $L_\lambda$; we choose an isomorphism $\bar{L}_\lambda\cong \C$ and consider everything as $\C$-valued from now on.

  Just as (4.2) was obtained from (3.1) in \cite{KottwitzStabilization},
  we derive from Theorem \ref{MainTheorem1} that $\mathrm{tr}(\tau\times h f^p|H^\ast_\xi) $
  is equal to
\begin{equation}\label{e:pre-stab}
\begin{aligned}\tau(\bG) \sum_{\gamma_0}\sum_{\kappa} \sum_{(\gamma,\delta)}
\lg \alpha(\gamma_0;\gamma,\delta),\kappa\rg^{-1} e(&\gamma,\delta) O_\gamma(f^p)TO_{\delta\sigma}(\phi_{\tau,h})\\
&\tr \xi(\gamma_0) \vol(A_{\bG}(\R)^\circ\bs I(\infty)(\R))^{-1},
\end{aligned}\end{equation}
where the inner sum runs over $(\gamma,\delta)$ such that $(\gamma_0;\gamma,\delta)\in \KT_j$. Again, $I(\infty)/\R$ is
the compact modulo center inner form of the centralizer $I_0$ of $\gamma_0$ in $\bG$. For every $\kappa$ in the sum,
fix a lift $\tilde{\kappa}$ of $\kappa$ to $\tilde{\fkK}(I_0/\Q)$.
Note $\lg \alpha(\gamma_0;\gamma,\delta),\kappa\rg =\lg \beta(\gamma_0;\gamma,\delta),\tilde{\kappa}\rg$.

Lemma 9.7 of \cite{Kot86} and the global hypothesis
for transfer factors (\cite[\S6.4]{LS87})
may be summarized as follows.

\begin{lem}\label{l:transfer-conjugacy}
  There exists a canonical map
  $$
  \mathcal{T}: \coprod_{(\bH,s,\eta)\in \cE_{\el}(\bG)} \bH(\Q)_{\GHreg,\semis}/\sim_{\st}$$
  $$\longrightarrow \{(\gamma_0,\kappa):
  \gamma_0\in \bG(\Q)_{\semis}/\sim_{\st},~\kappa\in \fkK(I_0/\Q)\} \cup \{ \emptyset\}$$
  such that
\begin{enumerate}
  \item $(\bH,s,\eta,\gamma_{\bH})\mapsto (\gamma_0,\kappa)$ if
  $\gamma_0$ and $\gamma_{\bH}$ have matching stable conjugacy classes,
  $s$ lands in $\tilde{\fkK}(I_0/\Q)\subset Z(\hat{I}_0)$
   via $ Z(\hat{I}_{\gamma_{\bH}})\simeq Z(\hat{I}_0)$ and the image of $s$
    maps to $\kappa\in \fkK(I_0/\Q)$ via projection.
  \item $(\bH,s,\eta,\gamma_{\bH})\mapsto \emptyset$ if
  $\gamma_{\bH}$ does not transfer to $\bG(\Q)$.
\end{enumerate}
  For any $(\gamma_0,\kappa)$, there is a unique elliptic endoscopic triple $(\bH,s,\eta)\in \cE_{\el}(\bG)$
  containing a preimage of $(\gamma_0,\kappa)$; in that case, we have
  $$|\mathcal{T}^{-1}(\gamma_0,\kappa)|=|\mathrm{Out}((\bH,s,\eta))|.$$
  Moreover, for each
  $(\bH,s,\eta)\in \cE_{\el}(\bG)$, the transfer factors $\Delta_v$ may be normalized
  globally such that the following holds: for all
  $ \gamma_0\in \bG(\Q)_{\semis}$ and $\gamma_{\bH}\in \bH(\Q)_{\GHreg,\semis}$,
  $$\prod_v\Delta_v(\gamma_{\bH},\gamma_0)=
  \left\{
  \begin{array}{cl}
 1, &
  \mbox{if}~\gamma_{\bH}\mapsto\gamma_0,
   \\ 0, &
    \mbox{otherwise}\end{array} \right.$$
\end{lem}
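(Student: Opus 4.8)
The plan is to reduce Lemma~\ref{l:transfer-conjugacy} to results already in the literature, namely Lemma~9.7 of \cite{Kot86} together with the global hypothesis on transfer factors in \cite[\S6.4]{LS87}, and then to package them into the clean bookkeeping statement given above. First I would set up the correspondence $\mathcal{T}$: given an elliptic endoscopic triple $(\bH,s,\eta)$ and a $(\bG,\bH)$-regular semisimple $\gamma_\bH\in\bH(\Q)$, I would attempt to transfer $\gamma_\bH$ to a stable conjugacy class in $\bG(\Q)$. Local transfer at every place is automatic because $\gamma_\bH$ is $(\bG,\bH)$-regular, but the global transfer may be obstructed by an element of $\ker^1(\Q,I_0)$-type cohomology; when it is obstructed we send $(\bH,s,\eta,\gamma_\bH)$ to $\emptyset$, and otherwise we obtain a well-defined $\gamma_0\in\bG(\Q)_{\semis}/\!\sim_{\st}$. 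The second coordinate $\kappa$ is produced from $s$: since $I_0$ is (an inner form of) the centralizer of $\gamma_\bH$ in $\bH$, there is a canonical identification $Z(\hat I_{\gamma_\bH})\simeq Z(\hat I_0)$, the ellipticity of $(\bH,s,\eta)$ forces the image of $s$ to land in $\tilde\fkK(I_0/\Q)$, and projecting modulo $Z(\hat\bG)$ yields $\kappa\in\fkK(I_0/\Q)$. This is exactly the content of \cite[Lemma 9.7]{Kot86}, so the construction of $\mathcal{T}$ and well-definedness are essentially citations.

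Next I would prove the surjectivity-with-multiplicity statement: given a pair $(\gamma_0,\kappa)$ with $\gamma_0\in\bG(\Q)_{\semis}$ and $\kappa\in\fkK(I_0/\Q)$, one reconstructs the endoscopic datum. The class $\kappa$ determines, via the standard dictionary of \cite{Kot84a}, \S7--8, an endoscopic triple $(\bH,s,\eta)$ for $\bG$ in which $I_0$ embeds, and ellipticity of $(\bH,s,\eta)$ corresponds to $\kappa$ being ``elliptic'' in the appropriate sense — which is automatic here because $\gamma_0$ is elliptic in $\bG(\R)$ by the Kottwitz-triple hypotheses. Uniqueness of $(\bH,s,\eta)$ up to isomorphism is again \cite[Lemma 9.7]{Kot86}. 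The fiber count $|\mathcal{T}^{-1}(\gamma_0,\kappa)|=|\mathrm{Out}((\bH,s,\eta))|$ comes from the fact that two preimages $\gamma_\bH,\gamma_\bH'$ inside the \emph{same} $(\bH,s,\eta)$ differ by the action of $\mathrm{Out}((\bH,s,\eta))$ on stable classes, and conversely this action is free on the set of $(\bG,\bH)$-regular classes mapping to a fixed $\gamma_0$; this is the standard device used by Kottwitz to pass between sums over endoscopic groups and sums over $(\gamma_0,\kappa)$.

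Finally I would address the normalization of transfer factors. For each fixed $(\bH,s,\eta)\in\cE_{\el}(\bG)$ one invokes the global hypothesis of Langlands--Shelstad \cite[\S6.4]{LS87}: the local transfer factors $\Delta_v$, each canonical only up to a nonzero scalar, can be rescaled simultaneously so that for every pair of related classes $\gamma_\bH\mapsto\gamma_0$ the product $\prod_v\Delta_v(\gamma_\bH,\gamma_0)$ equals $1$ (and it is $0$ by definition when $\gamma_\bH$ does not transfer to $\gamma_0$, since then some local factor vanishes or the global obstruction is nontrivial). One must check compatibility between this global normalization and the canonical local normalizations used in \eqref{e:twisted-trans-factor} and \eqref{e:SO-infty} at $p$ and $\infty$ respectively; this is where the sign-convention bookkeeping of Remark~\ref{r:LS-normalization} enters, and it is the only place real care is needed. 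I expect the main obstacle to be precisely this compatibility of normalizations across all places together with tracking the effect of the $s\mapsto s^{-1}$ convention difference between \cite{KottwitzStabilization} and \cite{LS87}; the group-theoretic heart of the lemma is a direct quotation of \cite[Lemma 9.7]{Kot86} and requires no new argument.
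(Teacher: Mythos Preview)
Your proposal is correct and matches the paper's approach exactly: the paper does not give an independent proof of this lemma but simply introduces it as a summary of Lemma~9.7 of \cite{Kot86} together with the global hypothesis for transfer factors from \cite[\S6.4]{LS87}, which is precisely what you do. Your write-up supplies more of the underlying detail (the construction of $\kappa$ from $s$, the fiber count via the $\mathrm{Out}$-action, etc.) than the paper itself, but the content and the cited inputs are the same; the compatibility-of-normalizations issue you flag at the end is not part of the lemma proper but rather of how it is applied in the stabilization argument that follows.
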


  Thanks to the fundamental lemma (Theorem \ref{t:Fund-Lemma}),
  we can find
   an $\eta$-transfer $f^{\bH,p}$ of $f^p$ such that
  for every $\gamma_{\bH}\in \bH(\A^p_f)_{\semis,\GHreg}$,
 \begin{equation}
\label{e:SO-outside-p}
SO^{\bH(\A^p_f)}_{\gamma_{\bH}}(f^{\bH,p})= \sum_{\gamma\in \bG(\A^p_f)_{\semis}/\sim}
  \Delta^p(\gamma_{\bH},\gamma) e^p(\gamma) O_\gamma(f^p).\end{equation}
  Kottwitz's construction of $\beta_v$ at $v\neq p,\infty$ implies that
  (see the second last equality of \cite[p.169]{KottwitzStabilization},
  keeping Remark \ref{r:LS-normalization} in mind)
 \begin{equation}
\label{e:trans-outside-p}
  \Delta^p(\gamma_{\bH},\gamma)=
  \Delta^p(\gamma_{\bH},\gamma_0)\frac{\Delta^p(\gamma_{\bH},\gamma)}{\Delta^p(\gamma_{\bH},\gamma_0)}
  = \Delta^p(\gamma_{\bH},\gamma_0)\lg \beta^p(\gamma_0;\gamma,\delta),s\rg^{-1}.
\end{equation}
We have defined $f^{\bH}_{\tau,h}$ and $f^{\bH}_\xi$ in the last section.
 Put $f^{\bH}=f^{\bH,p}f^{\bH}_{\tau,h}f^{\bH}_\xi$.
 Lemma \ref{l:transfer-conjugacy}, formulas
  \eqref{e:SO-p}, \eqref{e:twisted-trans-factor},
   \eqref{e:SO-infty-vanish}, \eqref{e:SO-outside-p},
  \eqref{e:trans-outside-p} and \eqref{e:SO-infty}
  imply (cf. argument on \cite[p.188-189]{KottwitzStabilization}) that
   the expression \eqref{e:pre-stab} is equal to
$$ \sum_{(\bH,s,\eta)\in \cE_{\el}(\bG)}\tau(\bG)|\mathrm{Out}((\bH,s,\eta))|^{-1}
 \sum_{\substack{\gamma_{\bH}\in\bH(\Q)_{\semis}/\sim_{\st}\\
 \mathrm{elliptic~in~}\bH(\R)}} SO_{\gamma_H}(f^{\bH})$$
$$=\sum_{(\bH,s,\eta)} \iota(\bG,\bH)\tau(\bH) \sum_{\substack{\gamma_{\bH}\in\bH(\Q)_{\semis}/\sim_{\st}\\
 \mathrm{elliptic~in~}\bH(\Q)}} SO_{\gamma_H}(f^{\bH})
= \sum_{(\bH,s,\eta)} \iota(\bG,\bH) ST^{\bH}_{\el}(f^{\bH}).$$
Here $\iota(\bG,\bH)$ is by definition
$\tau(\bG)\tau(\bH)^{-1}|\mathrm{Out}((\bH,s,\eta))|^{-1}$.
The proof of Theorem \ref{MainTheorem2} is complete.

\begin{rem}\label{r:dependence-on-triple}
  The stable orbital integrals of the function $f^{\bH}$
  depend only on the isomorphism class of $(\bH,s,\eta)$. For instance
  suppose that $(\bH,s,\eta)$ is replaced with an isomorphic triple $(\bH,sz,\eta)$ with $z\in Z(\hat{\bG})$.
  Then $f^{\bH,p}$ does not change whereas
  $f^{\bH}_{\tau,h}$ and $f^{\bH}_\xi$ are multiplied by $\mu_1(z)$ and $\mu_1^{-1}(z)$, respectively.
\end{rem}

\section{Pseudostabilization}\label{s:pseudostab}

In this section, we employ the easier process of pseudostabilization as in \cite{KottwitzLambdaAdic}
that works for certain compact unitary group Shimura varieties with trivial endoscopy. It has the advantage that
one can avoid mention of endoscopy almost completely.\footnote{In particular, all of our results in the
nonendoscopic case are independent of recent work on the fundamental lemma. Only the base change fundamental lemma by Kottwitz, Clozel and Labesse is needed.}

We recall the setup, cf. \cite{KottwitzLambdaAdic} and Section 8.1.1.3 in \cite{Far04}. We start with a CM field $F$ with totally real subfield $F_0$.
Let $D/F$ be a central division algebra of dimension $n^2$ with an involution $\ast$ that restricts to complex conjugation on $F$.
We want to construct PEL data with $C=D$, compatible with $\ast$. First, we set $B=D^{\mathrm{op}}$, acting on $V=B$ from the left.
In order to define the pairing on $V$, we choose some $b\in B^\times$ with $b^\ast = b$ such that the pairing
$$(x,y) = \mathrm{tr}_B(xby^\ast)$$
gives a positive pairing on $B\otimes_\Q \R$; this is possible because the set of all $b\in B^{\ast=1}\otimes \mathbb{R}$ with this
property is an open cone in $B^{\ast=1}$ by Lemma 2.8 in \cite{KottwitzPoints}. We endow $B$ with the involution $x\mapsto x^\sharp = bx^\ast b^{-1}$;
the previous remarks show that this is a positive involution, as required.
Now take some $\beta\in F$ with $\beta^\sharp = -\beta$, and endow $V$ with the inner product
$$\lg x,y\rg = \mathrm{tr}_B(xb^{-1}\beta y^\sharp).$$
One checks that starting with $(B,\sharp)$, this makes $V$ a left $B$-module with a nondegenerate $\sharp$-hermitian form,
and $C=\mathrm{End}_B(V)=D$, inducing the involution $\ast$ on $C=D$.
Finally, we take some $\ast$-homomorphism $\bh_0: \C\ra D\otimes \R$ such that $\lg x,\bh_0(i)y\rg$ is a positive pairing on $V$.

This defines global PEL data. Let $E$ be the associated reflex field, $p$ a prime satisfying our local assumptions; in particular,
we have additional integral data at $p$, and the group $\bG$, defined over $\Q$ by
\[
\bG(R) = \{ x\in D\otimes_\Q R\mid xx^\ast\in R^\times\}
\]
has a model over $\mathbb{Z}_{(p)}$. Also, let $\fkp$ be a place of $E$ above $p$, and let $\xi$ be an algebraic representation
of $\bG$ over a number field $L$; let $\lambda$ be a place of $L$ lying above $\ell\neq p$.

\begin{thm}\label{MainTheorem3} Let $\tau$, $h$ and $f^p$ as usual. Then
\[
N \tr(\tau\times hf^p | H^\ast_\xi) = \tr(f_{\tau,h}^{\bG} f^p | H^\ast_\xi)\ .
\]
Here $N=|\Pi^2(\xi)|\cdot |\pi_0(G(\R)/Z(G)(\R))|$, cf. \cite{KottwitzLambdaAdic}, p. 659, and $f_{\tau,h}^{\bG}$
is the endoscopic transfer of $\phi_{\tau,h}$ for the trivial endoscopic triple $(\bH,s,\eta)=(\bG,1,\mathrm{id})$.
\end{thm}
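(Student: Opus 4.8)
The plan is to follow Kottwitz's \emph{pseudostabilization} from \cite[\S\S3--4]{KottwitzLambdaAdic}, whose two endpoints are Theorem \ref{MainTheorem1} on the one side and the Arthur--Selberg trace formula for $\bG$ on the other; no form of the fundamental lemma enters, only the existence of base change transfers. Since the PEL datum here is one with trivial endoscopy (as in \cite{KottwitzLambdaAdic}), the set $\cE_{\el}(\bG)$ consists of $(\bG,1,\mathrm{id})$ alone, and via Lemma \ref{l:transfer-conjugacy} this forces $\fkK(I_0/\Q)$ to be trivial for every $\gamma_0$ occurring in Theorem \ref{MainTheorem1}; hence the constraint $\alpha(\gamma_0;\gamma,\delta)=1$ is automatic and may be dropped, so that
\[
\tr(\tau\times hf^p\mid H^\ast_\xi)=\sum_{(\gamma_0;\gamma,\delta)\in\KT_j}c(\gamma_0;\gamma,\delta)\,O_\gamma(f^p)\,TO_{\delta\sigma}(\phi_{\tau,h})\,\tr\xi(\gamma_0).
\]

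Next I would fix $\gamma_0$ (with centralizer $I_0$) and reassemble the inner sum over $(\gamma,\delta)$. Triviality of $\fkK(I_0/\Q)$, together with the control of $\ker^1(\Q,I_0)$ in this setting, ensures that the adelic classes stably conjugate to $\gamma_0$ away from $p$, elliptic at $\infty$, and with $\kappa_{\bG,p}(\delta)=-\mu_1$ at $p$ are exactly those arising from global data; using this, the vanishing $\phi_{\tau,h}(\delta)=0$ unless $\kappa_\bG(\delta)=-\mu_1$, and formula \eqref{e:SO-p} for the trivial endoscopic triple --- that is, the defining property of the base change transfer, $SO^{\bG(\Q_p)}_{\gamma_0}(f^\bG_{\tau,h})=\sum_{\delta}e(\bG_{\delta\sigma})\,TO_{\delta\sigma}(\phi_{\tau,h})$ --- one collapses the above, absorbing $c(\gamma_0;\gamma,\delta)$ into volume and cohomology factors, into
\[
\tau(\bG)\sum_{\gamma_0}\vol\bigl(A_\bG(\R)^\circ\backslash I(\infty)(\R)\bigr)^{-1}\tr\xi(\gamma_0)\,O^{\bG(\A_f^p)}_{\gamma_0}(f^p)\,SO^{\bG(\Q_p)}_{\gamma_0}(f^\bG_{\tau,h}),
\]
the sum running over $\gamma_0\in\bG(\Q)_{\semis}$ elliptic in $\bG(\R)$; this is the passage to the elliptic terms exactly as in \cite[\S4]{KottwitzLambdaAdic}, all measures being the Tamagawa measures fixed in Section \ref{s:proof-Thm1.2}.

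Now I would apply the Arthur--Selberg trace formula for $\bG$ --- which, since $\bG$ is anisotropic modulo center over $\Q$, has no parabolic or weighted terms and whose spectral side is $\sum_\pi m(\pi)\tr\pi(g)$ over the whole automorphic spectrum --- to $g=f^p\,f^\bG_{\tau,h}\,f^\bG_\xi$, with $f^\bG_\xi$ the archimedean function of Section \ref{sub:construction-infty} for $(\bG,1,\mathrm{id})$, i.e. $(-1)^{q(\bG)}$ times the average over $\Pi^2(\varphi_\xi)$ of the pseudocoefficients (the factors $\langle\mu_\bh,s\rangle^{-1}$ and $\det(\omega_*)$ being trivial). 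By \eqref{e:SO-infty} and \eqref{e:SO-infty-vanish} for the trivial triple, $O^{\bG(\R)}_\gamma(f^\bG_\xi)$ vanishes unless $\gamma$ is elliptic in $\bG(\R)$, where it contributes $e(I)\,\tr\xi(\gamma)\,\vol(A_\bG(\R)^\circ\backslash I(\R))^{-1}$ up to the count of rational conjugacy classes in its stable class; grouping the geometric side into stable classes --- legitimate since $f^\bG_\xi$ is stable and Tamagawa volumes agree on inner forms --- matches it with the expression displayed above, up to a constant $c_1$ that records only the difference between the two volume normalizations. Equating with the spectral side gives
\[
\tr(\tau\times hf^p\mid H^\ast_\xi)=c_1^{-1}\sum_\pi m(\pi)\,\tr\pi_f^p(f^p)\,\tr\pi_p(f^\bG_{\tau,h})\,\tr\pi_\infty(f^\bG_\xi).
\]

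Finally comes the purely archimedean input: for every irreducible admissible unitary $\pi_\infty$,
\[
\tr\pi_\infty(f^\bG_\xi)=c_2\cdot\chi_{EP}(\pi_\infty\otimes\xi),\qquad \chi_{EP}(\pi_\infty\otimes\xi)=\sum_i(-1)^i\dim H^i(\mathfrak g,K_\infty;\pi_\infty\otimes\xi),
\]
where $c_2$ depends only on $\bG$ and $\xi$; this is the Clozel--Delorme computation of traces of averaged pseudocoefficients against $(\mathfrak g,K_\infty)$-cohomology, valid for all $\pi_\infty$ and not merely for discrete series, and the averaging is what contributes the factor $|\Pi^2(\xi)|^{-1}$ to $c_2$. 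Substituting, and invoking Matsushima's formula $\tr(f\mid H^\ast_\xi)=\sum_\pi m(\pi)\,\tr\pi_f(f)\,\chi_{EP}(\pi_\infty\otimes\xi)$ for $f\in C_c^\infty(\bG(\A_f))$, one gets $\tr(\tau\times hf^p\mid H^\ast_\xi)=(c_2/c_1)\,\tr(f^\bG_{\tau,h}f^p\mid H^\ast_\xi)$, so the theorem reduces to the numerical identity $c_1/c_2=N=|\Pi^2(\xi)|\cdot|\pi_0(\bG(\R)/Z(\bG)(\R))|$. The main obstacle is precisely this last bookkeeping: one must keep the Tamagawa-measure normalizations of Theorem \ref{MainTheorem1} consistent with those of the trace formula and pin down $c_2$ (the factor $|\pi_0(\bG(\R)/Z(\bG)(\R))|$ enters here, through the behaviour of the pseudocoefficients with respect to the split center of $\bG(\R)$); there is no conceptual difficulty beyond faithfully reproducing the constants of \cite[\S\S3--4]{KottwitzLambdaAdic}.
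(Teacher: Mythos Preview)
Your proposal is correct and follows essentially the same route as the paper: triviality of $\fkK(I_0/\Q)$ in this setup, collapse of Theorem~\ref{MainTheorem1} into stable orbital integrals via the base change transfer defining $f^{\bG}_{\tau,h}$, the trace formula for $\bG$ anisotropic modulo center, the archimedean identity relating traces of averaged pseudocoefficients to Euler--Poincar\'e characteristics, and Matsushima's formula. The paper is terser---it simply cites \cite[(5.2), Lem.~4.1, Lem.~4.2]{KottwitzLambdaAdic} for the steps you spell out and records the constant $N$ up front rather than tracking $c_1,c_2$ separately---but the content is the same.
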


\begin{proof} This follows from following the arguments in \cite{KottwitzLambdaAdic}. The crucial point in comparison to the general case is that $\mathfrak{K}(I_0/\mathbb{Q})$ is trivial for all Kottwitz triples $(\gamma_0;\gamma,\delta)$. This makes the sum over $\kappa$ in \eqref{e:pre-stab} superfluous, and one can factor everything into stable orbital integrals directly. The outcome is, cf. \cite[(5.2)]{KottwitzLambdaAdic}, that the left-hand side equals
\[
N\tau(\bG) \sum_{\gamma_0} SO_{\gamma_0}(f_{\tau,h}^{\bG} f^p f_\infty)\ ,
\]
where $\gamma_0$ runs through stable conjugacy classes in $\bG(\Q)$. Applying the trace formula (and Lemma 4.1 of \cite{KottwitzLambdaAdic}), this rewrites as
\[
N \sum_\pi m(\pi) \tr \pi(f_{\tau,h}^{\bG} f^p f_\infty)\ .
\]
Now one applies Lemma 4.2 of \cite{KottwitzLambdaAdic} to rewrite this as
\[
\sum_{\pi_f} \tr \pi_f(f_{\tau,h}^{\bG} f^p) \sum_{\pi_\infty} m(\pi_f\otimes \pi_\infty) \mathrm{ep}(\pi_\infty\otimes \xi)\ ,
\]
with the notation $\mathrm{ep}$ denoting the Euler-Poincar\'{e} characteristics of the Lie algebra cohomology as in \cite{KottwitzLambdaAdic}, p. 660.
Finally, one uses Matsushima's formula, cf. \cite{KottwitzLambdaAdic}, p. 655, to rewrite this as
\[
\tr(f_{\tau,h}^{\bG} f^p | H^\ast_\xi)\ ,
\]
as desired.
\end{proof}

In general one would expect that the following statement is true at places $p$, such that all places of $F_0$ above $p$ are split in $F$: If $\pi_f$, $\pi_f^\prime$ are two irreducible admissible representations of $\bG(\A_f)$ such that the $\pi_f$-isotypic and $\pi^\prime_f$-isotypic components of $H^\ast_\xi$ are nonzero and the representations are isomorphic away from $p$, i.e. $\pi_f^p\cong \pi_f^{\prime p}$, then also $\pi_p\cong \pi_p^\prime$.

Under some extra conditions, this can deduced from results of Harris and Labesse, \cite{HarrisLabesse}, which show the existence of a stable base change lift of automorphic representations $\pi$ of $\bG$ to automorphic representations of $\GL_n$ over $F$, cf. \cite{Far04}, Section A.3.

In particular, assume that $F$ is the composite of $F_0$ and an imaginary quadratic field $\cK$. Note that if $q$ is a rational prime that splits in $\cK$ as $w_0w_0^c$, there is a decomposition
\[
\bG(\Q_p) = \prod_{v|w_0} \GL_n(F_v)\times \Q_p^\times\ ,
\]
which induces a decomposition
\[
\pi_q = \bigotimes_{v|w_0} \pi_v \otimes \chi_{w_0}
\]
for every irreducible admissible representation $\pi_q$ of $\bG(\Q_p)$.

\begin{thm}\label{t:cond-bc} Assume that $F=F_0 \cK$ is the composite of the totally real field $F_0$ and an imaginary quadratic field $\cK$ in which $p$ splits. Assume that the division algebra $D$ is split at all places of $F_0$ which do not split in $F$, and at all nonarchimedean places either split or a division algebra. Let $\pi_f$ be an irreducible admissible representation of $\bG(\A_f)$ that occurs in $H^\ast_\xi$, and such that there is some rational prime $q=w_0w_0^c$ split in $\cK$ such that in the decomposition
\[
\pi_q = \bigotimes_{v|w_0} \pi_v \otimes \chi_{w_0}\ ,
\]
one component $\pi_v$ is supercuspidal.

Then there exists a cuspidal automorphic representation $\Pi$ of $\GL_n(\A_F)$ and a Hecke character $\chi$ of $\A_{\cK}^\times$ such that $\Pi_\infty$ is regular algebraic, $\Pi^\vee\cong \Pi\circ c$, where $c:F\ra F$ is complex conjugation, and for all places $p=u_0u_0^c$ split in $\cK$, the decomposition of $\pi_p$ from above is given by
\[
\pi_p = \bigotimes_{v|u_0} \Pi_v\otimes \chi_{u_0}\ .
\]

A similar description holds true at unramified inert places.
\end{thm}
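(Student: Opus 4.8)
The plan is to combine Theorem \ref{MainTheorem3} (pseudostabilization) with the base-change transfer results of Harris--Labesse, following the strategy sketched in \cite{Far04}, Section A.3. First I would note that since $\pi_f$ occurs in $H^\ast_\xi$, the pseudostabilized identity of Theorem \ref{MainTheorem3} forces $\pi_f$ to extend to an automorphic representation $\pi = \pi_f \otimes \pi_\infty$ of $\bG(\A)$ with $\pi_\infty$ cohomological for $\xi$; in particular $\pi$ is discrete automorphic, and by our assumption that $\bG$ is anisotropic modulo center it occurs in the discrete spectrum. The supercuspidality hypothesis on one component $\pi_v$ with $v \mid w_0$ for a split prime $q$ is what makes the situation tractable: it pins down the local behaviour of $\pi$ well enough that the stable base change of Harris--Labesse, \cite{HarrisLabesse}, applies and produces an automorphic representation of $\GL_1(\A_\cK) \times \GL_n(\A_F)$, which we write as $\chi \boxtimes \Pi$.

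Next I would check that $\Pi$ is \emph{cuspidal} and not merely an isobaric sum: here one uses precisely the presence of the supercuspidal local component. Indeed, if $\Pi$ were a nontrivial isobaric sum $\Pi_1 \boxplus \cdots \boxplus \Pi_k$, then by strong multiplicity one applied to the split place $q$ the local component $\pi_v$ would be a full (non-supercuspidal) parabolic induction, contradicting supercuspidality of $\pi_v$. Thus $\Pi$ is cuspidal. The relations $\Pi_\infty$ regular algebraic and $\Pi^\vee \cong \Pi \circ c$ follow from the construction of the base-change lift: the conjugate self-duality is built into the fact that $\bG$ is a unitary similitude group relative to $F/F_0$, and the infinitesimal character of $\Pi_\infty$ is read off from $\xi$ via the cohomological normalization of $\pi_\infty$ (regularity of $\xi$ gives regularity of $\Pi_\infty$). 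The Hecke character $\chi$ on $\A_\cK^\times$ records the similitude factor.

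The remaining point is the \emph{local} description of $\pi_p$ at all primes $p = u_0 u_0^c$ split in $\cK$, namely $\pi_p = \bigotimes_{v \mid u_0} \Pi_v \otimes \chi_{u_0}$. At such $p$ one has the product decomposition $\bG(\Q_p) = \prod_{v\mid u_0}\GL_n(F_v) \times \Q_p^\times$, and the stable base change of Harris--Labesse is, at split places, simply the identity (base change from $\GL_n(F_v)$ over $\Q_p$ to $\GL_n(F_v)$ over $\cK_{u_0} = \Q_p$ is trivial on each factor). So the local components literally match, up to reconciling the similitude-factor normalization with $\chi_{u_0}$, which is a bookkeeping check using compatibility of central characters. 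The statement at unramified inert places follows the same way, replacing the trivial base change by the unramified base change map, which is explicit on Satake parameters.

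The main obstacle is ensuring that the Harris--Labesse transfer genuinely applies under exactly the stated hypotheses on $D$ (split at places of $F_0$ nonsplit in $F$, and split-or-division at all nonarchimedean places) and on $\pi$ (one supercuspidal component at a split prime): one needs the supercuspidality to bypass the difficulties with non-generic or non-tempered representations in the general stable base change, and to guarantee that $\pi$ is not of the problematic endoscopic type at $q$. This is precisely where one invokes \cite{HarrisLabesse} together with the discussion in \cite{Far04}, Section A.3; the rest is a matter of matching normalizations at split and inert places.
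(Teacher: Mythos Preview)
Your proposal is essentially correct and follows the same route as the paper: the paper's own proof is a one-line citation to Theorem~A.3.1 of \cite{Far04}, which packages precisely the Harris--Labesse base-change argument you have sketched (existence of the lift $\chi\boxtimes\Pi$, cuspidality from the supercuspidal local component, conjugate self-duality and regular-algebraicity of $\Pi_\infty$, and the explicit matching at split and unramified inert places). One small correction: the fact that $\pi_f$ extends to an automorphic $\pi=\pi_f\otimes\pi_\infty$ with $\pi_\infty$ cohomological for $\xi$ is a direct consequence of Matsushima's formula, not of the pseudostabilization identity of Theorem~\ref{MainTheorem3}; the latter is a trace identity and plays no role in this step.
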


\begin{proof} This follows from Theorem A.3.1 in \cite{Far04}.
\end{proof}

\begin{cor}\label{cor:RigidAtp} In the situation of the theorem, let $p\neq q$ be split in $\cK$ and let $\pi_f^\prime$ be a second irreducible admissible representation of $\bG(\A_f)$ which occurs in $H^\ast_\xi$ and for which $\pi_f^{\prime p}\cong \pi_f^p$. Then $\pi_p\cong \pi_p^\prime$.
\end{cor}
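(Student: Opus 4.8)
The plan is to deduce this from Theorem~\ref{t:cond-bc} together with strong multiplicity one for $\GL_n$. First I would check that the hypotheses of Theorem~\ref{t:cond-bc} are also satisfied by $\pi_f'$: since $q\neq p$ and $\pi_f^{\prime p}\cong\pi_f^p$, we have $\pi_q'\cong\pi_q$, so in the decomposition $\pi_q'=\bigotimes_{v\mid w_0}\pi_v'\otimes\chi_{w_0}'$ one of the components $\pi_v'$ is supercuspidal, exactly as for $\pi_q$. Hence Theorem~\ref{t:cond-bc} provides a regular algebraic cuspidal automorphic representation $\Pi'$ of $\GL_n(\A_F)$ with $(\Pi')^\vee\cong\Pi'\circ c$ and a Hecke character $\chi'$ of $\A_{\cK}^\times$ describing $\pi_u'$ at every prime $u$ split in $\cK$ (and, in the inert variant, at every unramified inert prime), just as $\Pi$ and $\chi$ describe $\pi_u$.

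Next I would compare $\Pi$ with $\Pi'$ place by place. For every finite rational prime $u\neq p$ one has $\pi_u\cong\pi_u'$; if $u=u_0u_0^c$ is split in $\cK$ this reads $\bigotimes_{v\mid u_0}\Pi_v\otimes\chi_{u_0}\cong\bigotimes_{v\mid u_0}\Pi_v'\otimes\chi_{u_0}'$, and the uniqueness of the tensor factorization of an irreducible admissible representation of $\prod_{v\mid u_0}\GL_n(F_v)\times\Q_u^\times$ forces $\Pi_v\cong\Pi_v'$ for all $v\mid u_0$ and $\chi_{u_0}\cong\chi_{u_0}'$; applying $\Pi^\vee\cong\Pi\circ c$ and its analogue for $\Pi'$ then also matches the components at the places $v\mid u_0^c$. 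The inert variant of Theorem~\ref{t:cond-bc} does the same at unramified inert primes. Since $p$ is split in $\cK$, every place of $F$ not lying above $p$ lies above a rational prime different from $p$; discarding in addition the finitely many primes ramified in $F$, we conclude that $\Pi$ and $\Pi'$ have isomorphic local components at all but finitely many places of $F$, and likewise for $\chi$ and $\chi'$. Strong multiplicity one for $\GL_n$ over $F$ (and the analogous statement for Hecke characters of $\cK$) then gives $\Pi\cong\Pi'$ and $\chi\cong\chi'$.

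Finally I would feed $\Pi\cong\Pi'$ and $\chi\cong\chi'$ back into the description of Theorem~\ref{t:cond-bc} at the prime $p=u_0u_0^c$ itself, which yields $\pi_p=\bigotimes_{v\mid u_0}\Pi_v\otimes\chi_{u_0}\cong\bigotimes_{v\mid u_0}\Pi_v'\otimes\chi_{u_0}'=\pi_p'$, as claimed.

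I do not expect a serious obstacle here: the statement is essentially formal once Theorem~\ref{t:cond-bc} is available. The two points deserving attention are confirming that the supercuspidality hypothesis propagates from $\pi_f$ to $\pi_f'$ (immediate because $q\neq p$) and ensuring that $\Pi$ and $\Pi'$ are known to agree on a cofinite set of places before invoking strong multiplicity one; this is why it is convenient to use the inert-place version of Theorem~\ref{t:cond-bc} alongside the split-place one, rather than relying on a refined form of strong multiplicity one for the density-$\tfrac12$ set of split places alone.
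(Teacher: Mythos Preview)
Your proposal is correct and follows essentially the same approach as the paper, which simply invokes strong multiplicity one for $\GL_n$ together with the quadratic base change of Theorem~\ref{t:cond-bc} (citing \cite[Cor.~VI.2.3]{HT01}). You have spelled out in detail what the paper compresses into one line, including the check that the supercuspidality hypothesis at $q$ transfers to $\pi_f'$ and the use of the inert-place part of Theorem~\ref{t:cond-bc} to get cofinite agreement before applying strong multiplicity one.
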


\begin{proof} Use strong multiplicity $1$ for $\GL_n$ and the quadratic base change for $\bG$ in the nonendoscopic case. cf. Corollary VI.2.3 of \cite{HT01}.
\end{proof}

Moreover, the results of Harris-Taylor, \cite[Theorem C]{HarrisTaylor}, prove the existence of $\ell$-adic Galois representations associated to $\Pi$ that satisfy a local-global compatibility at all finite places not dividing $\ell$. We remark that for our purposes, the slightly weaker statement proven in \cite{ScholzeLLC} would suffice.

\begin{cor}\label{cor:ExGalRepr} In the situation of the theorem, there is an $\ell$-adic Galois representation
\[
R_\ell(\Pi): \Gal(\bar{F}/F)\ra \GL_n(\bar{\Q}_\ell)
\]
such that for all primes $v$ of $F$ not dividing $\ell$, the restriction $R_\ell(\Pi)|_{W_{F_v}}$ is given by $\sigma_\ell(\Pi_v^\vee)\otimes |\cdot|^{(1-n)/2}$, where $\sigma_\ell$ denotes the local Langlands correspondence for $\GL_n(F_v)$.$\hfill \Box$
\end{cor}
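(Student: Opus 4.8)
The plan is to obtain $R_\ell(\Pi)$ by invoking the construction of Harris and Taylor, once the hypotheses of their main theorem have been checked. By Theorem~\ref{t:cond-bc} the representation $\Pi$ is cuspidal, with $\Pi_\infty$ regular algebraic and $\Pi^\vee\cong\Pi\circ c$, so the only remaining input required is that $\Pi$ be square-integrable at some finite place of $F$. First I would extract this from the hypothesis of Theorem~\ref{t:cond-bc}: let $q = w_0 w_0^c$ be the rational prime split in $\cK$ for which some component $\pi_v$ in
\[
\pi_q = \bigotimes_{v|w_0}\pi_v \otimes \chi_{w_0}
\]
is supercuspidal. Applying the description of $\pi_q$ in terms of $\Pi$ furnished by Theorem~\ref{t:cond-bc} and comparing the two tensor-product decompositions of $\pi_q$, one gets $\Pi_v\cong\pi_v$ for the places $v\mid w_0$ of $F$; in particular $\Pi$ is supercuspidal, hence square-integrable, at such a $v$.

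With this in hand, \cite[Theorem~C]{HarrisTaylor} applies and yields a continuous semisimple representation $R_\ell(\Pi)\colon\Gal(\bar F/F)\ra\GL_n(\bar\Q_\ell)$ whose restriction to the decomposition group at every finite place $v\nmid\ell$ is, up to an abelian twist, the representation $\sigma_\ell(\Pi_v^\vee)$ attached to $\Pi_v^\vee$ by local Langlands. The second step is therefore simply to cite that theorem and then rewrite its output in the normalization used here; this rewriting --- reconciling the conventions for the local Langlands correspondence, for geometric versus arithmetic Frobenius, and for the Tate twist --- is what accounts for the explicit factor $\sigma_\ell(\Pi_v^\vee)\otimes|\cdot|^{(1-n)/2}$ in the statement.

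There is no genuine obstacle here: the entire content is the verification of square-integrability at a finite place, which is guaranteed by the supercuspidality hypothesis already built into Theorem~\ref{t:cond-bc}. As an alternative to \cite{HarrisTaylor} one may instead cite \cite{ScholzeLLC}, which constructs $R_\ell(\Pi)$ together with local-global compatibility at all places not dividing $\ell$ after Frobenius semisimplification; since only this weaker form is used in the sequel, this is in fact the route I would take. Finally, in the degenerate case $n=1$ the representation $\Pi$ is a Hecke character and $R_\ell(\Pi)$ is the associated Galois character by class field theory, so the statement is immediate there.
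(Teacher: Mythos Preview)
Your proposal is correct and follows the same route as the paper: the corollary is simply a citation of \cite[Theorem~C]{HarrisTaylor} (with \cite{ScholzeLLC} as an acceptable alternative), once one notes that the supercuspidality hypothesis built into Theorem~\ref{t:cond-bc} guarantees the square-integrability at a finite place needed there. Your write-up merely spells out the verification of hypotheses more explicitly than the paper does.
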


\section{The stable Bernstein center}\label{s:stBernstein}

The most convenient way to state our conjecture is to use the conjectural theory of the stable Bernstein center, cf. \cite{Vog93}. The importance of the stable Bernstein center in relation to test functions appearing in generalizations of the Langlands-Kottwitz method has also been emphasized recently by Haines.

Let us recall the main definitions. Let $G$ be a connected reductive group over a $p$-adic field $F$. In our convention
varieties are not required to be of finite type or connected.

\begin{defprop}(\cite{Ber84}) A supercuspidal pair for $G$ is a pair $(M,\sigma)$, where $M\subset G$ is a Levi subgroup of $G$, and $\sigma$ is a supercuspidal representation of $M(F)$. An irreducible smooth representation $\pi$ of $G(F)$ is said to have supercuspidal support $(M,\sigma)$, if it appears as a subquotient of the normalized parabolic induction of $\sigma$ to $G(F)$; for any $\pi$, there is a unique such $(M,\sigma)$ up to conjugation by $G(F)$.

The variety $\Omega(G)$ of all supercuspidal pairs $(M,\sigma)$ up to conjugation by $G(F)$ has a natural structure as the $\mathbb{C}$-valued points of an infinite disjoint union of algebraic varieties over $\mathbb{C}$, which are quotients of tori by finite group actions. Let $\mathcal{Z}(G)$ be the ring of regular functions on $\Omega(G)$, the Bernstein center of $G$. Then there is a natural $\mathcal{Z}(G)$-algebra structure on $C_c^\infty(G(F))$, given by a convolution map
\[
\mathcal{Z}(G)\times C_c^\infty(G(F))\rightarrow C_c^\infty(G(F))\ :\ (z,f)\mapsto z\ast f\ ,
\]
such that for all $f\in C_c^\infty(G(F))$, all $z\in \mathcal{Z}(G)$ and any irreducible $\pi$ with supercuspidal support $(M,\sigma)$, we have
\[
\tr(z\ast f|\pi) = z(M,\sigma) \tr(f|\pi)\ .
\]
\end{defprop}

\begin{defprop} A semisimple $L$-parameter\footnote{Vogan named it an infinitesimal character in \cite{Vog93}, which unfortunately conflicts with the common usage of the term as a character of $\mathcal{Z}(G)$, which Vogan refers to as a classical infinitesimal character.} for $G$ is a homomorphism $\lambda: W_F\ra {}^L G$ from the Weil group $W_F$ of $F$ into the $L$-group of $G$ that commutes with the projection maps to the Galois group of $F$, and such that $\lambda(W_F)$ consists of semisimple elements.

The variety $\Omega^{\st}(G)$ of all semisimple $L$-parameters for $G$ up to $\hat{G}$-conjugation has a natural structure as an infinite disjoint union of algebraic varieties over $\mathbb{C}$, which are quotients of tori by finite group actions. Let $\mathcal{Z}^\st(G)$ be the ring of regular functions on $\Omega^\st(G)$, the stable Bernstein center of $G$.
\end{defprop}

Now assume for motivation that the local Langlands conjecture is known. In that case, any irreducible $\pi$ has an associated $L$-parameter $\varphi_\pi: W_F\times \SL_2(\C)\ra {}^L G$, and the restriction $\varphi_\pi|_{W_F}$ defines a semisimple $L$-parameter $\lambda_\pi$. Here, we embed $W_F$ into $W_F\times \SL_2(\C)$ by using the identity map on the first factor, and the map
\[
w\mapsto \left(\begin{array}{cc}|w|_F^{1/2} & 0 \\ 0 & |w|_F^{-1/2}\end{array}\right)
\]
from $W_F$ to $\SL_2(\C)$, using the norm character $|\cdot|_F:W_F\ra \R^\times_{>0}$. Conjecturally, cf. Conjecture 7.18 in \cite{Vog93}, this induces a finite map of algebraic varieties
\[
\Omega(G)\ra \Omega^\st(G)\ :\ (M,\sigma)\mapsto \lambda(M,\sigma)
\]
such that whenever $\pi$ has supercuspidal support $(M,\sigma)$, then $\lambda_\pi = \lambda(M,\sigma)$.

We will use the following form of the local Langlands conjecture. cf. Remark \ref{r:Kazhdan} below.

\begin{conjecture}\label{conj:st-Bernstein} There is a natural finite map of algebraic varieties
\[
\Omega(G)\ra \Omega^\st(G)\ .
\]
The induced map $\mathcal{Z}^\st(G)\ra \mathcal{Z}(G)$ has the following property: If $f\in C_c^\infty(G(F))$ has vanishing stable orbital integrals then for any $z^\st \in \mathcal{Z}^\st(G)$ with image $z\in \mathcal{Z}(G)$, the convolution $z\ast f\in C_c^\infty(G(F))$ has vanishing stable orbital integrals.
\end{conjecture}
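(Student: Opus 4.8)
The plan is to prove the two assertions of the conjecture separately: first the existence of the natural finite morphism $\Omega(G)\to\Omega^\st(G)$, and then the claim that the induced pullback $\mathcal{Z}^\st(G)\to\mathcal{Z}(G)$ (pullback of regular functions along this finite morphism, which is automatically a $\mathbb{C}$-algebra homomorphism) preserves the property of having vanishing stable orbital integrals.

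For the first assertion I would construct the map directly from the local Langlands correspondence for Levi subgroups. Given a supercuspidal pair $(M,\sigma)$, the representation $\sigma$ is tempered, so its $L$-parameter $\varphi_\sigma\colon W_F\times\SL_2(\C)\to{}^LM$ is trivial on the $\SL_2$-factor; composing $\varphi_\sigma|_{W_F}$ with a fixed $L$-embedding ${}^LM\hookrightarrow{}^LG$ gives a semisimple $L$-parameter $\lambda(M,\sigma)\colon W_F\to{}^LG$, well-defined up to $\hat G$-conjugacy and independent of the chosen representative. One then checks that the assignment $(M,\sigma)\mapsto\lambda(M,\sigma)$ is a morphism of the relevant algebraic varieties, which amounts to the (standard) fact that LLC for $M$ varies algebraically under unramified twists of $\sigma$; and that it is finite, the fibre over $\lambda$ being the set of realisations of $\lambda$ as a ``cuspidal-support parameter'', which is finite because a semisimple Weil-group representation has only finitely many decompositions into irreducible summands and each summand is the parameter of at most finitely many supercuspidals of the corresponding factor. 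In the cases of EL type needed in this paper, where $G$ is a product of Weil restrictions of general linear groups, this is entirely unconditional: $\lambda\bigl(\prod\GL_{n_i},\bigotimes\sigma_i\bigr)=\bigoplus\varphi_{\sigma_i}$, and finiteness is the finiteness of partitions of a semisimple representation into supercuspidal blocks.

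For the second assertion I would argue spectrally. Writing $z$ for the image of $z^\st$, the defining property of the Bernstein center gives $\tr(z\ast f\mid\pi)=z(M_\pi,\sigma_\pi)\,\tr(f\mid\pi)$ for every irreducible $\pi$ with supercuspidal support $(M_\pi,\sigma_\pi)$, and by construction (together with the compatibility $\lambda_\pi=\lambda(M_\pi,\sigma_\pi)$, i.e.\ Conjecture 7.18 of \cite{Vog93} quoted above) this scalar equals $z^\st(\lambda_\pi)$, depending only on the semisimple parameter $\lambda_\pi=\varphi_\pi|_{W_F}$. The crucial point is that this makes the scalar \emph{constant across each $L$-packet}: if $\pi,\pi'\in\Pi_\varphi$ then $\lambda_\pi=\lambda_{\pi'}=\varphi|_{W_F}$, even though their supercuspidal supports may genuinely differ (which is exactly why one must pass to $\mathcal{Z}^\st$). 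Hence for the stable virtual character $\Theta_\varphi=\sum_{\pi\in\Pi_\varphi}\Theta_\pi$ one gets $\Theta_\varphi(z\ast f)=z^\st(\varphi|_{W_F})\,\Theta_\varphi(f)$. If $f$ has vanishing stable orbital integrals then $\Theta_\varphi(f)=0$ for all (tempered) $\varphi$ — this is the local stabilisation input, namely that stable distributions are topologically spanned by the $\Theta_\varphi$, with non-tempered and $A$-packet contributions reduced to the tempered case via the Langlands classification and a standard-module argument, together with Kazhdan's density theorem as in Remark \ref{r:Kazhdan} — and therefore $\Theta_\varphi(z\ast f)=0$ for all $\varphi$, i.e.\ $z\ast f$ again has vanishing stable orbital integrals. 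In the $\GL_n$-setting this final step is immediate, since stable conjugacy is conjugacy, so vanishing stable orbital integrals just means $\tr(f\mid\pi)=0$ for all irreducible $\pi$, a condition visibly preserved by convolution with any element of $\mathcal{Z}(G)$.

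The main obstacle is precisely the step identifying ``$f$ has vanishing stable orbital integrals'' with ``$\Theta_\varphi(f)=0$ for all $\varphi$'': this rests on the expected description of the space of stable distributions in terms of $L$-packets, which is only partially available in general and is the reason the statement is recorded as a conjecture. Everything else — the construction of the map, its finiteness, and the bookkeeping with the Bernstein center — is formal once the local Langlands correspondence is in hand, and in particular is unconditional in the EL-type cases used for the applications.
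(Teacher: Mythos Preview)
The statement you are addressing is recorded in the paper as a \emph{conjecture}, not as a proved result; there is no proof in the paper to compare your proposal against. Immediately after stating it, the authors simply observe that it holds for products of Weil restrictions of $\GL_n$ by the known local Langlands correspondence of Harris--Taylor and Henniart, and that in the $\GL_n$ case the map $\mathcal{Z}^\st(G)\to\mathcal{Z}(G)$ was essentially constructed directly from Lubin--Tate towers in earlier work of the first author. No further argument is offered, and for general $G$ none is claimed: the statement is genuinely open.

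Your proposal is therefore not so much a proof as a correct diagnosis of \emph{why} the statement is conjectural, and on that score it is accurate. You rightly isolate the essential obstruction --- the identification of ``$f$ has vanishing stable orbital integrals'' with ``$\Theta_\varphi(f)=0$ for all $\varphi$'' --- as depending on the expected spectral description of stable distributions, and you correctly note that in the $\GL_n$ setting everything collapses because stable conjugacy coincides with ordinary conjugacy. One small slip: a supercuspidal $\sigma$ of $M(F)$ need not be tempered, since the Bernstein variety parametrizes supercuspidals up to arbitrary unramified twist and non-unitary $\sigma$ therefore occur; for $\GL_n$ the $L$-parameter of any supercuspidal is nonetheless irreducible as a Weil-group representation and hence trivial on the $\SL_2$-factor, so this does not affect the EL-type case that the paper actually uses.
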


The conjecture is true for (products of Weil restrictions of) $\GL_n$, by \cite{HT01}, \cite{Hen00}. In the case of $GL_n$ one of us essentially constructed the map $\mathcal{Z}^\st(G)\ra \mathcal{Z}(G)$ directly from Lubin-Tate towers in \cite{ScholzeGLn}.

Moreover, let us recall some statements from \cite{Vog93} about stable characters. Here we only explain the case where $G$ is quasisplit and refer the reader to \cite{Vog93} or \cite{Art06} for general $G$. The easiest case is the case of tempered $L$-parameters; so let
\[
\varphi: W_F\times \SL_2(\C)\ra {}^L G
\]
be a tempered $L$-parameter. Let $S_\varphi$ be the group of $g\in \hat{G}$ centralizing the image of $\varphi$ up to a 1-coboundary of $W_F$ valued in $Z(\hat{G})$. Define $\fkS_{\varphi}=\pi_0(S_{\varphi}/Z(\hat{G}))$.
  It is expected that there exists a bijection $\pi\mapsto r_\pi$
  from the $L$-packet $\Pi(\varphi)$ to the set of irreducible finite dimensional representations of $\fkS_{\varphi}$. Moreover, one expects that the distribution
\[
 S\Theta_{\varphi}=\sum_{\pi\in \Pi(\varphi)} \dim r_\pi \tr \pi
\]
  is stable. The bijection $\pi\mapsto r_\pi$ is supposed to be canonical up to twist by a character, so that the stable character is independent of all choices.

  In particular, in this case, a suitable linear combination of the characters of the representations in the $L$-packet induces a stable character. In general, this will not be the case, and one has to include some representations from other $L$-packets, as in the example of $A$-packets. However, all representations appearing will have the same {\it semisimple} $L$-parameter. Indeed, Vogan conjectures that there are many stable characters of the form
  \begin{equation}\label{e:gen-st-char}
  S\Theta = \sum_{\pi} a(\pi) \tr \pi\ ,
  \end{equation}
  where $a(\pi)$ is nonzero for only finitely many $\pi$, and for all such $\pi$, the semisimple $L$-parameter $\lambda=\lambda_\pi$ is the same. Let $z^\st\in \mathcal{Z}^\st(G)$, with associated $z\in \mathcal{Z}(G)$. Then for all $f\in C_c^\infty(G(F))$, we have
  \begin{equation}\label{e:factor-st-Bernstein}
  S\Theta(z\ast f) = z^\st(\lambda) S\Theta(f)\ .
  \end{equation}
  Indeed, this is an immediate consequence of Conjecture \ref{conj:st-Bernstein} and the formula for the action of the Bernstein center on the Hecke algebra.

\begin{rem}\label{r:Kazhdan}
  The work of Kazhdan and Varshavsky \cite{KV06} suggests a harmonic analytic
  definition of the stable Bernstein center without reference to semisimple $L$-parameters.\footnote{In fact our definition is not the same
  as the one in \cite{KV06}. Though the two definitions are supposed to be equivalent, and this is the case for $G=GL_n$, it is difficult to check in general. Our definition has the advantage that $\cY^{\st}(G)$ is easily seen to be a subalgebra of $\cZ(G)$.}
   Recall that $\cZ(G)$ can be viewed as the algebra of $G(F)$-invariant distributions $\alpha$ on $G(F)$ which preserve $C^\infty_c(G(F))$ under the convolution $*$. Define $\cY^{\st}(G)$ to be the subspace of $\cZ(G)$ consisting of $\delta$ such that $\delta* f$ has vanishing stable orbital integrals
  for every $f\in C^\infty_c(G(F))$ whose stable orbital integrals vanish. It is clear that $\cY^{\st}(G)$ is a subalgebra of $\cZ(G)$.
  In this optic Conjecture \ref{conj:st-Bernstein} would hold true if the following are verified: $\cZ(G)$ is a finitely generated $\cY^{\st}(G)$-module and there is a natural surjection $\cZ^{\st}(G)\ra \cY^\st(G)$, which should be induced by an infinitesimal version of the local Langlands correspondence. If $G$ is quasisplit, then the map $\cZ^\st(G)\ra \cY^\st(G)$ should be an isomorphism. This anticipated isomorphism justifies our terminology for $\cZ^{\st}(G)$.
\end{rem}

\section{Character identities satisfied by $f^{\bH}_{\tau,h}$: Conjectures}\label{s:char-id}

  The contents of this section are purely local in nature, and we go back to the
  local setting of \cite{ScholzeDefSpaces} and Subsection \ref{sub:tw-endos-at-p}.
  Let $(\bH,s,\eta)$ be an endoscopic triple for $\bG$.

  We will state two version of our conjecture. One relies on the local Langlands correspondence for $\bH$, at least for tempered $L$-parameters. The other relies on Conjecture \ref{conj:st-Bernstein} about the stable Bernstein center, and is stronger.

  Recall that we have a conjugacy class $\ol{\mu}$ of cocharacters $\mu:\G_m\ra \bG_{\ol{\Q}_p}$ as defined in Subsection
  \ref{sub:tw-endos-at-p}. Fix a maximal torus $\bT$ of $\bG$ over $\ol{\Q}_p$ with a set of positive roots.
  After conjugation we may assume that $\mu$ factors through $\bT$ and
  is a dominant coweight. Write $\rho$ for the half sum of all positive roots.
  Up to isomorphism there exists a unique complex representation $r_{-\mu}$ of ${}^L \bG$
  characterized by \cite[Lem 2.1.2]{Kot84b}. In particular, $r_{-\mu}|_{\hat{\bG}}$
  is irreducible, with highest weight given by (the dominant representative of) $-\mu$.
  Let $\mathrm{Frob}$ denote a geometric Frobenius element in $W_E$.

\begin{conjecture}\label{c:char-id}
  Let $\tau\in \mathrm{Frob}^j I_{E}$ for $j\ge 1$ and $h\in C^\infty_c(\bG(\Z_p))$.
  Denote by $h^{\bH}$ an $\eta$-transfer of $h$ from $\bG$ to $\bH$.
  For every tempered $L$-parameter $\varphi_{\bH}$ of $\bH$, with associated
  semisimple $L$-parameter $\lambda_{\bH} = \varphi_{\bH}|_{W_{\Q_p}}$, we have
  \begin{equation}\label{e:char-id-for-f-tau-h}
    S\Theta_{\varphi_{\bH}}(f^{\bH}_{\tau,h})=
  \tr (s^{-1}\tau|(r_{-\mu}\circ \eta\lambda_{\bH}|_{W_E})|\cdot |_E^{-\lg \rho,\mu\rg}) S\Theta_{\varphi_{\bH}}(h^{\bH}).
  \end{equation}

 \end{conjecture}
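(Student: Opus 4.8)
We expect \eqref{e:char-id-for-f-tau-h} to hold in general, but can indicate a proof only when all data are unramified and, more substantially, in the case of EL type; the general case must for now remain open, for the reason given at the end. When $\bG$, $\bH$, $\eta$ and $h$ are all unramified, the plan is to reduce to Kottwitz's computation via the Satake isomorphism. By the point-counting formula of \cite{ScholzeDefSpaces} together with the base change fundamental lemma, $\phi_{\tau,h}$ has the same twisted orbital integrals as a multiple of the Kottwitz test function, so that its $\tilde\eta$-transfer $f^{\bH}_{\tau,h}$ is computed by the unramified transfer attached to \eqref{e:L-morphism-at-p}; pairing against an unramified tempered $\varphi_{\bH}$ and using $h^{\bH}=\eta^*(h)$, both sides of \eqref{e:char-id-for-f-tau-h} become the eigenvalue of an explicit element of the unramified Hecke algebra, and the factor $\tr(s^{-1}\tau\mid r_{-\mu}\circ\eta\lambda_{\bH}|_{W_E}\,|\cdot|_E^{-\langle\rho,\mu\rangle})$ falls out, the $s^{-1}$ being precisely the twist introduced by the choice of the $t_i$ in \eqref{e:L-morphism-at-p}. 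This yields Lemma \ref{l:conj-unramified}.

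For EL type, write $\bG_{\Q_p}$, up to its central torus, as a product of Weil restrictions $\Res_{K/\Q_p}\GL_n$. The key structural observation is that every endoscopic triple $(\bH,s,\eta)$ for such a group has $\eta$ conjugate to a Levi embedding $\hat\bH\hookrightarrow\hat\bG$ and $s$ block-scalar. Consequently (i) the $\tilde\eta$-transfer $f^{\bH}_{\tau,h}$ is, up to the explicit $s$-twist, a constant term of $\phi_{\tau,h}$ along the corresponding parabolic on the $\GL_n$-side, and (ii) since $L$-packets for $\GL_n$ are singletons, $S\Theta_{\varphi_{\bH}}$ is simply the character of the irreducible representation parabolically induced from $\varphi_{\bH}$, whose parameter is $\eta\varphi_{\bH}$. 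Using moreover that the deformation spaces attached to a Levi embed into those of the ambient group, this reduces \eqref{e:char-id-for-f-tau-h} to the case $\bH=\bG=\Res_{K/\Q_p}\GL_n$, i.e. to a statement about $\phi_{\tau,h}$ and the local Langlands correspondence for $\GL_n$.

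For that case I would combine the construction of $\phi_{\tau,h}$ in \cite{ScholzeDefSpaces} with the trace identity of \cite{ScholzeLLC}, Theorem 1.2, which has exactly this shape — extending it from the Lubin--Tate setting to arbitrary $\mu$ being formal once the construction of $\phi_{\tau,h}$ in \cite{ScholzeDefSpaces} is in hand — and with \cite{ScholzeGLn}, which produces the required element of the stable Bernstein center directly from the Lubin--Tate tower and hence yields the stronger, $\cZ^{\st}$-form of the conjecture via \eqref{e:factor-st-Bernstein}. One first treats supercuspidal $\varphi_{\bH}$ and then propagates to all tempered, indeed all semisimple, parameters, using that both sides of \eqref{e:char-id-for-f-tau-h} are compatible with parabolic induction and with the action of $\cZ^{\st}$.

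The main obstacle is the bookkeeping of the twist by $s$. One must trace through the elements $t_i\in Z(\hat\bH)$ with $t_1\cdots t_r=s$ chosen in \eqref{e:L-morphism-at-p}, the decomposition of $r_{-\mu}|_{\hat\bH}$ under the Levi embedding $\eta$, and the base change normalizations, and check that the scalars they produce on the various isotypic blocks assemble precisely into $\tr(s^{-1}\tau\mid r_{-\mu}\circ\eta\lambda_{\bH}|_{W_E}\,|\cdot|_E^{-\langle\rho,\mu\rangle})$, and not some other twist; here the conventions of Remark \ref{r:LS-normalization} and the Kottwitz-versus-\cite{ScholzeDefSpaces} normalization of $\mu$ both enter. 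A second obstacle, absent in EL type but responsible for the statement remaining only a conjecture in general, is that realizing the local Langlands correspondence for $\bH$ inside the cohomology of the relevant deformation spaces — the analogue of \cite{ScholzeLLC} for groups other than $\GL_n$ — is not available, so for general $\bH$ one cannot compute $S\Theta_{\varphi_{\bH}}(f^{\bH}_{\tau,h})$ by these methods.
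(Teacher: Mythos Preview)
Your treatment of the unramified case and of the reduction from a general endoscopic triple to the trivial one in EL type is broadly in line with the paper (Lemma~\ref{l:conj-unramified} and Corollary~\ref{c:conj-GLn}, the latter also exploiting that $\hat{\bH}\hookrightarrow\hat{\bG}$ is a Levi). The substantial problem is your claim that, for $\bH=\bG=\Res_{K/\Q_p}\GL_n$, extending \cite[Thm~1.2]{ScholzeLLC} from the Lubin--Tate setting to arbitrary $\mu$ is ``formal once the construction of $\phi_{\tau,h}$ in \cite{ScholzeDefSpaces} is in hand''. It is not. The Lubin--Tate case is special because the deformation spaces are fully understood and the local Langlands correspondence is realized in their cohomology; for general signature no such local realization is available to you, and nothing in \cite{ScholzeDefSpaces} or \cite{ScholzeGLn} supplies it. In fact the paper goes in the opposite direction: it \emph{uses} the EL case of the conjecture, once established, to prove algebraizability of the general EL deformation spaces (Corollary~\ref{cor:ELAlg}).

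The paper's actual proof of the key case $\bH=\bG$ (Theorem~\ref{t:conj-GLn}) is a global-to-local argument, quite different from what you sketch. One globalizes the local EL datum to a compact unitary group Shimura variety with trivial endoscopy as in Section~\ref{s:pseudostab}; the pseudostabilization formula (Theorem~\ref{MainTheorem3}) then relates $\tr(\tau\times hf^p|H^\ast_\xi)$ to $\tr(f_{\tau,h}^{\bG}f^p|H^\ast_\xi)$. For $\pi_p$ arising as the $p$-component of a suitable automorphic $\pi$, one computes $H^\ast_\xi(\pi_f)|_{W_{E_\fkp}}$ at unramified places via Lemma~\ref{l:conj-unramified}, matches it with the Galois representation built from \cite{HT01} (Corollary~\ref{cor:ExGalRepr}), and transports the answer to the ramified place $\fkp$ by Chebotarev. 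This proves \eqref{e:conj-GLn} for such $\pi_p$; a density argument using \cite{BDK86} and \cite[Thm~4.4]{Shin-Plan} then extends it to all $\pi_p$. Your purely local strategy would need an independent way to compute $\tr(f^{\bG}_{\tau,h}|\pi)$ for general $\mu$, and that is exactly what is missing.
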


 Here is a simple consistency check.
 Let $z\in Z(\hat{\bG})$. Suppose that $(\bH,s,\eta)$ is replaced with $(\bH,sz,\eta)$ and that
  $f^{\bH}_{\tau,h}$ is constructed from the latter. Then both sides of \eqref{e:char-id-for-f-tau-h}
  are multiplied by $\mu_1(z)$ (cf. Remark \ref{r:dependence-on-triple}).

  It is expected that stable tempered characters are dense in the space of stable distributions,
  cf. \cite{Sha90}, Conjecture 9.2. Namely for any stable distribution $\mathcal{D}$ on $\bH(\Q_p)$, the value
  of $\mathcal{D}(f^{\bH}_{\tau,h})$ should be determined by \eqref{e:char-id-for-f-tau-h} for tempered
  $\varphi_{\bH}$. In particular, there should be an analogue of \eqref{e:char-id-for-f-tau-h} for nontempered $\varphi_{\bH}$.
  We feel that the best way to formulate this conjecture is to use the stable Bernstein center.

  Let $z_\tau^{\bH,\st}\in \mathcal{Z}^\st(\bH)$ be the function in the stable Bernstein center of $\bH$ that takes
  each semisimple $L$-parameter $\lambda: W_{\Q_p}\ra {}^L \bH$ to
  \[
   \tr (s^{-1}\tau|(r_{-\mu}\circ \eta\lambda|_{W_E})|\cdot |_E^{-\lg \rho,\mu\rg})\ .
   \]
   General requirements on the local Langlands correspondence ensure that this is a regular function on $\Omega^\st(\bH)$.
   Let $z_\tau^\bH\in \mathcal{Z}(\bH)$ be its image.

  \begin{conjecture}\label{c:char-id-st-Bernstein}
  Let $\tau$ and $h$ be as above. Then for any $\eta$-endoscopic transfer $h^{\bH}$ of $h$, the function $z_\tau^\bH\ast h^{\bH}$ is
  a twisted endoscopic $\tilde{\eta}$-transfer of $\phi_{\tau,h}$.
  \end{conjecture}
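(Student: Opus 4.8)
The plan is to reduce the statement to the character identity of Conjecture \ref{c:char-id}, exploiting the behaviour of the (stable) Bernstein center. First, recall that by its very construction the function $f^{\bH}_{\tau,h}$ \emph{is} a twisted endoscopic $\tilde\eta$-transfer of $\phi_{\tau,h}$, i.e. it satisfies \eqref{e:SO-p0} (equivalently \eqref{e:SO-p}). Since that defining condition only constrains stable orbital integrals of $(\bG,\bH)$-regular semisimple elements, a function $g\in C_c^\infty(\bH(\Q_p))$ is again such a transfer if and only if $g$ and $f^{\bH}_{\tau,h}$ have the same stable orbital integrals on the $(\bG,\bH)$-regular semisimple locus — equivalently, the same image in the space of stable orbital integrals of $\bH(\Q_p)$. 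Thus it suffices to show that $z_\tau^{\bH}\ast h^{\bH}-f^{\bH}_{\tau,h}$ has vanishing stable orbital integrals, where $z_\tau^{\bH}\in\cZ(\bH)$ is the image of $z_\tau^{\bH,\st}$ under the map of Conjecture \ref{conj:st-Bernstein}.

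To prove vanishing of stable orbital integrals I would test against stable characters. Granting the expected density of stable tempered characters among stable distributions on $\bH(\Q_p)$ (cf. \cite{Sha90}, Conjecture 9.2), it is enough to check $S\Theta_{\varphi_\bH}(z_\tau^{\bH}\ast h^{\bH})=S\Theta_{\varphi_\bH}(f^{\bH}_{\tau,h})$ for every tempered $L$-parameter $\varphi_\bH$ of $\bH$, with associated semisimple parameter $\lambda_\bH=\varphi_\bH|_{W_{\Q_p}}$. On one side, $S\Theta_{\varphi_\bH}$ is a stable character all of whose constituents have semisimple $L$-parameter $\lambda_\bH$, so \eqref{e:factor-st-Bernstein} and the definition of $z_\tau^{\bH,\st}$ give
\[
S\Theta_{\varphi_\bH}(z_\tau^{\bH}\ast h^{\bH})=z_\tau^{\bH,\st}(\lambda_\bH)\,S\Theta_{\varphi_\bH}(h^{\bH})=\tr\!\big(s^{-1}\tau\,\big|\,(r_{-\mu}\circ\eta\lambda_\bH|_{W_E})|\cdot|_E^{-\lg\rho,\mu\rg}\big)\,S\Theta_{\varphi_\bH}(h^{\bH}).
\]
On the other side, Conjecture \ref{c:char-id} applied to $\varphi_\bH$ gives exactly the same expression for $S\Theta_{\varphi_\bH}(f^{\bH}_{\tau,h})$. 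Hence the two traces agree, and the reduction is complete. (In the general case $s=s_0z$ with $s_0\in Z(\hat\bH)^{\Gamma(p)}$ and $z\in Z(\hat\bG)$ one reduces to $s_0$ via the scalar $\mu_1(z)$ exactly as in Subsection \ref{sub:tw-endos-at-p}; by Remark \ref{r:dependence-on-triple} both sides scale compatibly, so nothing changes.)

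The real content — and the main obstacle — is Conjecture \ref{c:char-id} itself, which the argument above takes as input: it is the genuine character identity for the twisted endoscopic transfer of the deformation-space test function $\phi_{\tau,h}$ in terms of $r_{-\mu}$ and the local Langlands parameter, and it is out of reach in general. I would expect it to be accessible precisely in the EL-type cases, where $\bH$ is again a product of Weil restrictions of general linear groups, so that the local Langlands correspondence and Conjecture \ref{conj:st-Bernstein} are available by \cite{HT01}, \cite{Hen00}, \cite{ScholzeGLn}, and where $\phi_{\tau,h}$ and its transfer can be analyzed through the cohomology of Lubin--Tate and deformation spaces, extending \cite{ScholzeLLC}; one then also has to carry out the passage from tempered to arbitrary semisimple parameters via the stable Bernstein center. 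A secondary point worth verifying is that $z_\tau^{\bH,\st}$ is genuinely a regular function on $\Omega^\st(\bH)$, i.e. a bona fide element of $\cZ^\st(\bH)$; this should follow from the standard finiteness and continuity properties of $\lambda\mapsto r_{-\mu}\circ\eta\lambda|_{W_E}$ built into the formalism of the stable Bernstein center, as indicated in the text preceding the statement.
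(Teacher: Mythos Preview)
The statement is a \emph{conjecture}, and the paper does not prove it in general. What the paper does is: (i) introduce Conjecture~\ref{c:char-id-st-Bernstein} as the stable-Bernstein-center strengthening of Conjecture~\ref{c:char-id}; (ii) observe, in the paragraph immediately preceding it, that density of stable tempered characters (\cite{Sha90}, Conjecture~9.2) would allow one to pass from the tempered identity~\eqref{e:char-id-for-f-tau-h} to the general one~\eqref{e:gen-char-id-for-f-tau-h}; and (iii) remark that in the EL and quasi-EL cases ``it is an easy exercise to verify that the two conjectures are equivalent.'' The only unconditional results toward either conjecture are the unramified case (Lemma~\ref{l:conj-unramified}) and the EL cases (Theorem~\ref{t:conj-GLn}, Corollary~\ref{c:conj-GLn}), and those are formulated and proved for Conjecture~\ref{c:char-id}.

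Your proposal is therefore not a proof but a reduction of Conjecture~\ref{c:char-id-st-Bernstein} to Conjecture~\ref{c:char-id} together with Conjecture~\ref{conj:st-Bernstein} and the density of stable tempered characters. You are clearly aware of this, and the reduction you write out is exactly the heuristic the paper itself sketches between the two conjectures and in~\eqref{e:gen-char-id-for-f-tau-h}. In the EL setting your argument does become unconditional: $L$-packets are singletons so stability is vacuous, Conjecture~\ref{conj:st-Bernstein} is known, and the density input is supplied by the ordinary trace Paley--Wiener theorem of \cite{BDK86}; this is precisely the ``easy exercise'' the paper leaves to the reader. So there is no error in your reasoning, but you should present it for what it is---a conditional equivalence with Conjecture~\ref{c:char-id}, unconditional only in the EL case---since outside that case neither the paper nor your argument establishes the conjecture.
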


  This conjecture says that the twisted endoscopic transfers of $\phi_{\tau,h}$ factor as a product of two factors, one depending only on $\tau$, the other depending only on $h$. This property by itself would be of great interest.

  As a consequence of this conjecture, assume that $S\Theta$ is a stable distribution as in \eqref{e:gen-st-char}. Let $f_{\tau,h}^\bH$ denote any
  transfer of $\phi_{\tau,h}$. Then
  \begin{equation}\label{e:gen-char-id-for-f-tau-h}
  S\Theta(f_{\tau,h}^\bH) =  \tr (s^{-1}\tau|(r_{-\mu}\circ \eta\lambda|_{W_E})|\cdot |_E^{-\lg \rho,\mu\rg}) S\Theta(h^{\bH})\ .
  \end{equation}
  This generalizes \eqref{e:char-id-for-f-tau-h}, and should apply in particular to the stable characters associated to $A$-packets.

  If $\bG(\Q_p)$ is a product of general linear groups over finite extensions of $\Q_p$, as happens in all cases of EL or quasi-EL type
  (cf. Remark \ref{r:quasi-EL} below),
  then $\bH(\Q_p)$ is of the same type, and the relevant local Langlands conjecture is known by \cite{HT01} and \cite{Hen00}.
  In this case the statements of Conjecture \ref{c:char-id} and Conjecture \ref{c:char-id-st-Bernstein} do not rely on any unverified conjecture
  and simplify as stability issues do not arise. It is an easy exercise to verify that the two conjectures are equivalent.
  The conjecture was shown to be true by one of us, \cite{ScholzeGLn}, in the Lubin-Tate case.
  In \S\ref{s:prove-conj} we will settle Conjecture \ref{c:char-id} in all EL cases.

A version of this conjecture also makes sense in the unramified case without any assumption
  thanks to the unramified Langlands correspondence.
  Hence assume that we are in the case of unramified EL or PEL data. Given our assumptions,
  this amounts to the only additional requirement that $F$ is unramified.
  In particular, $\bG$ is unramified over $\Q_p$. Suppose moreover that $\bH$, $\eta:{}^L \bH \ra {}^L \bG$,
   and $h$ are unramified; in particular (up to scalar) $h=\triv_{\bG(\Z_p)}$.
  If $\varphi_{\bH}$ is unramified, then we write $\pi_{\varphi_{\bH}}$ for the unramified
  representation corresponding to $\varphi_{\bH}$. We define a character $S\Theta_{\varphi_{\bH}}$ on $\cH^{\ur}(\bH(\Q_p))$,
   by setting $S\Theta_{\varphi_{\bH}}$ equal to $\tr \pi_{\varphi_{\bH}}$ if $\varphi_{\bH}$ is unramified and $0$ otherwise.

\begin{lem}\label{l:conj-unramified}
 Assume that $F$ is unramified, and that $p\neq 2$ in the PEL case. One can choose $f_{\tau,h}^\bH,h^{\bH}\in \cH^{\ur}(\bH(\Q_p))$,
 and for all $L$-parameters $\varphi_{\bH}$ with semisimple $L$-parameter $\lambda_{\bH}$, we have
 \begin{equation}\label{e:unr-case}
 S\Theta_{\varphi_{\bH}}(f_{\tau,h}^{\bH}) =  \tr (s^{-1}\tau|(r_{-\mu}\circ \eta\lambda_{\bH}|_{W_E})|\cdot |_E^{-\lg \rho,\mu\rg}) S\Theta_{\varphi_{\bH}}(h^{\bH})\ .
 \end{equation}
\end{lem}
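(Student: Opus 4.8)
The plan is to reduce \eqref{e:unr-case} to a computation of Satake transforms, using that in the unramified case $\phi_{\tau,h}$ is nothing but the classical test function of the Langlands--Kottwitz method in good reduction. First I would observe that since $F$ is unramified, the deformation spaces of $p$-divisible groups underlying the definition of $\phi_{\tau,h}$ in \cite{ScholzeDefSpaces} are formally smooth, i.e.\ have good reduction; hence the sheaf whose nearby cycles compute $\phi_{\tau,h}$ is the constant sheaf placed in degree $0$, and the inertia subgroup $I_E$ acts trivially. Consequently $\phi_{\tau,h}$ depends on $\tau$ only through the integer $j$, and for $h=c\cdot\triv_{\bG(\Z_p)}$ it equals $c$ times the Kottwitz function $\phi_r\in\cH^{\ur}(\bG(\Q_{p^r}))$, the characteristic function (with our volume normalization) of the $\bG(\Z_{p^r})$-double coset determined by the minuscule class $\ol{\mu}$, with $r=jf$. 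The $\sigma$-twisted Satake transform of $\phi_r$ is computed by Kottwitz (cf.\ \cite{Kot84b}, \cite{KottwitzLambdaAdic}): on the $\sigma$-conjugacy class attached to an unramified parameter $\lambda$ of $\bG$ it takes the value $\tr\big(\mathrm{Frob}^j\mid (r_{-\mu}\circ\lambda|_{W_E})\,|\cdot|_E^{-\lg\rho,\mu\rg}\big)$.

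Next I would produce the two transfers. I define $f^{\bH}_{\tau,h}\in\cH^{\ur}(\bH(\Q_p))$ to be $c$ times the image of $\phi_r$ under the homomorphism $\cH^{\ur}(\bG(\Q_{p^r}))\to\cH^{\ur}(\bH(\Q_p))$ which is dual, via the Satake isomorphism, to the map on unramified $L$-parameters induced by the $L$-morphism $\tilde{\eta}:{}^L\bH\to{}^L R_r$ of \eqref{e:L-morphism-at-p}. That this $f^{\bH}_{\tau,h}$ is a genuine twisted endoscopic $\tilde{\eta}$-transfer of $\phi_{\tau,h}$, i.e.\ satisfies \eqref{e:SO-p0}, is the twisted fundamental lemma for the full spherical Hecke algebra; it follows from the twisted fundamental lemma for the unit (see \cite{Wal08}) by the standard Hecke-algebra reduction, together with the base-change fundamental lemma of Kottwitz, Clozel and Labesse for the component $\xi:{}^L\bG\to{}^L R_r$. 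For the other transfer, $h^{\bH}:=c\cdot\triv_{\bH(\Z_p)}$ is an $\eta$-transfer of $h$ by the last assertion of Theorem \ref{t:Fund-Lemma}; the hypothesis $p\neq2$ in the PEL case is what guarantees that the relevant integral model of $\bG$ and these transfer statements are available.

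The third step is the Satake computation itself. For an unramified $\varphi_{\bH}$ with semisimple part $\lambda_{\bH}$, one unwinds the definition of $\tilde{\eta}$: the composition is $\xi\circ\eta$ up to the central $t$-twist with $t_1\cdots t_r=s$, which introduces a twist by $s$, while the relation $r=jf$ (with $f$ the residue degree of $E_\fkp/\Q_p$) matches $\mathrm{Frob}^r$ on the $\Q_{p^r}$-side with $\tau$, acting as $\mathrm{Frob}_{E_\fkp}^j$, on the $E_\fkp$-side. Plugging the resulting $\sigma$-conjugacy class into the formula of Step 1 gives
\[
\tr\pi_{\varphi_{\bH}}(f^{\bH}_{\tau,h})=c\cdot\tr\big(s^{-1}\tau\mid(r_{-\mu}\circ\eta\lambda_{\bH}|_{W_E})\,|\cdot|_E^{-\lg\rho,\mu\rg}\big),
\]
the inverse on $s$ being precisely the convention discrepancy recorded in Remark \ref{r:LS-normalization}. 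Since $S\Theta_{\varphi_{\bH}}(f^{\bH}_{\tau,h})=\tr\pi_{\varphi_{\bH}}(f^{\bH}_{\tau,h})$ and $S\Theta_{\varphi_{\bH}}(h^{\bH})=c$ when $\varphi_{\bH}$ is unramified (and both sides of \eqref{e:unr-case} vanish by definition of $S\Theta_{\varphi_{\bH}}$ when $\varphi_{\bH}$ is ramified), this is exactly \eqref{e:unr-case}.

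I expect the main obstacle to be purely one of bookkeeping: the three inputs --- good reduction of the deformation spaces, Kottwitz's Satake computation of $\phi_r$, and the twisted fundamental lemma for the spherical Hecke algebra --- are all available, but matching the normalizations so that the Satake identity reproduces \emph{exactly} the right-hand side of \eqref{e:unr-case} is delicate. One must keep careful track of the local Haar measures and the convention $\vol(\bG(\Z_{p^r}))=1$, of the canonical transfer factors and the $s\mapsto s^{-1}$ convention of Remark \ref{r:LS-normalization}, of the half-twist $|\cdot|_E^{-\lg\rho,\mu\rg}$, of the interplay of $r$, $j$ and $f$, and of why $p\neq2$ is needed in the PEL case.
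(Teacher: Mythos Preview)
Your proposal is correct and follows essentially the same approach as the paper: reduce to Kottwitz's function $\phi_r$ in the unramified Hecke algebra (the paper invokes \cite[Prop.~4.7]{ScholzeDefSpaces} for this, which is precisely your good-reduction observation), transfer via the Hecke-algebra homomorphism dual to $\tilde{\eta}$ using the twisted fundamental lemma for spherical Hecke algebras (the paper cites \cite[A.1.5]{Mor10}), and then quote Kottwitz's Satake computation leading to \cite[(9.7)]{KottwitzStabilization}. One small caution: the paper only asserts that $\phi_{\tau,h}$ and $\phi_r$ have the same \emph{twisted orbital integrals}, not that they coincide as functions, but since only twisted orbital integrals enter the transfer this makes no difference.
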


\begin{rem}
  The lemma shows that \eqref{e:char-id-for-f-tau-h} naturally
  extends the identity in the unramified case treated in \cite{KottwitzStabilization}.
\end{rem}

\begin{proof}
  From Proposition 4.7 of \cite{ScholzeDefSpaces}, we know that $\phi_{\tau,h}$ has the same twisted orbital integrals as
  Kottwitz's $\phi_r$ on \cite[p.173]{KottwitzStabilization} when $h=\triv_{\bG(\Z_p)}$. We may thus work with $\phi_r$ instead.
  We note that $\phi_r$ lies in the unramified Hecke algebra $\cH^{\ur}(\bG(\Q_{p^r}))$.

  The $L$-morphism \eqref{e:L-morphism-at-p} gives rise to
  the transfer $\tilde{\eta}^*:\cH^{\ur}(\bG(\Q_{p^r}))\ra \cH^{\ur}(\bH(\Q_p))$
  such that $\phi\mapsto \tilde{\eta}^*(\phi)$ is an $\tilde{\eta}$-transfer
  with respect to the canonical transfer factor of \S\ref{sub:tw-endos-at-p} (cf. \cite[A.1.5]{Mor10}).

  In particular, we can choose $f_{\tau,h}^{\bH},h^{\bH}\in \cH^{\ur}(\bH(\Q_p))$, as desired.
  Moreover, both sides of \eqref{e:unr-case} vanish unless $\varphi_{\bH}$ is unramified.

  Now assume that $\varphi_{\bH}$ is unramified. Then the argument for (9.7) of \cite{KottwitzStabilization}, which is based
  on the Satake transform (thus unconditional), shows that
  $$S\Theta_{\varphi_{\bH}}(f^{\bH}_{\tau,h})=
  p^{r\lg \rho,\mu\rg} \tr (s^{-1}\tau|(r_{-\mu}\circ \eta\lambda_{\bH}|_{W_E})),$$
  which is equivalent to \eqref{e:unr-case}.
  Note that $\Delta_p(\psi_{\bH},\pi_{\eta\varphi_{\bH}})=1$ in (9.7) of \cite{KottwitzStabilization} since the above transfer factor
  is compatible with the algebra map of unramified Hecke algebras.
\end{proof}

  As the final topic of this section, and as further supporting evidence for the conjecture, we verify that it leads
  to the expected description of the cohomology. We switch back to global notation and
  assume that $\bG$ is anisotropic over $\Q$ modulo center so that our Shimura varieties are compact.\footnote{We also assume that the flat closure of the generic fibre in the integral models defined in \cite{ScholzeDefSpaces} is proper.}

  The following are accepted on faith:
\begin{enumerate}
  \item the $A$-packet classification for discrete automorphic
  representations of $\bG$ and its
  elliptic endoscopic groups over $\Q$, encompassing the
  multiplicity formula \cite[(8.2)]{KottwitzStabilization}
  and a product formula for spectral transfer factors \cite[(9.6)]{KottwitzStabilization},\footnote{In particular, we assume that there is the global Langlands group. This should be avoidable in all PEL cases by using alternative global parameters as in \cite{ArthurEndoscopy}.}
  \item $ST^{\bH}_{\el}(f^{\bH})=ST^{\bH}_{\disc}(f^{\bH})$
  for all $(\bH,s,\eta)\in \cE_{\el}(\bG)$. (cf. \cite[(10.1)]{KottwitzStabilization})
\end{enumerate}
  Our argument copies the one in
   \cite[\S9-10]{KottwitzStabilization} with changes occurring only at $p$.
   Therefore we will only indicate the changes without repeating the details. All references
   are made to Kottwitz' article.
\begin{itemize}
  \item Use $f^{\bH,p}$, $f^{\bH}_{\tau,h}$ and $f^{\bH}_\xi$
  in place of $h^p$, $h_p$ and $h_\infty$.
  \item Drop the assumption that $\psi_p$ is unramified.
  Now (9.3) is equal to (9.7) if the latter is replaced by
  $$\sum_{\pi_p\in \Pi(\eta\psi_{\bH,p})} \Delta_p(\psi_{\bH},\pi_p)
  \tr (s^{-1}\tau|(r_{-\mu}\circ \eta\varphi_{\bH}|_{W_{E_{\fkp}}})|\cdot|_{\fkp}^{-\dim \Sh/2}) \tr \pi_p(h).$$
  This key equality is justified by Conjecture \ref{c:char-id}, or more precisely its version for $A$-parameters.
  \item On page 198, drop conditions (a) and (c).
  \item (9.9) is valid if $\tr \pi_f(f)$ is changed to
  $\tr \pi_f^p(f^p)$ and the line below reads
  $$A(x,\pi_p)=(-1)^{q(\bG)} \tr (s^{-1}\tau|(r_{-\mu}\circ \varphi_{\pi_p}|_{W_{E_{\fkp}}})|\cdot|_{\fkp}^{-\dim \Sh/2})
  \tr \pi_p(h).$$
  \item (10.4) holds if $\tr \pi_f(f)$
  is replaced with $\tr \pi_f^p(f^p)$ and
    we set in the line below
   $$A(\pi_p,\nu)=(-1)^{q(\bG)} \nu(s_\psi)\tr (\tau|V_\nu|\cdot|_{\fkp}^{-\dim \Sh/2})
   \tr \pi_p(h).$$
  \item From (10.4) we obtain (cf. (10.5)) that
  $\tr (\tau\times hf^p|H^*_\xi)$ equals
  $$ \sum_{[\psi]}\sum_{\pi_f} \tr \pi_f(hf^p)\sum_{\nu}
  (-1)^{q(\bG)} m(\pi_f,\nu)  \nu(s_\psi) \tr (\tau|V(\psi,\nu)).$$
  \item The above equality establishes that
  $$H^*_\xi=\sum_{[\psi]}\sum_{\pi_f} \sum_{\nu}
 (-1)^{q(\bG)} m(\pi_f,\nu) \nu(s_\psi) (\pi_f\otimes V(\psi_p,\nu)).$$
  as virtual $\bG(\A^p_f)\times \bG(\Z_p)\times W_{E_{\fkp}}$-representations, as desired.
\end{itemize}

Of course, one would expect that this identity stays true as $\bG(\A_f)\times W_{E_\fkp}$-representations. If $\bG_{\Q_p}$ is a product of Weil restrictions of general linear groups, then $\pi_p$ should be determined by $\pi_f^p$, which easily implies that this identity extends to $\bG(\A_f)\times W_{E_\fkp}$. In the general case, the first obstruction is that if $\pi_f$ and $\pi_f^\prime$ are two representations appearing in $H^\ast_\xi$ such that $\pi_f^p\cong \pi_f^{\prime p}$, then it is not clear that $\pi_p$ and $\pi_p^\prime$ belong to the same $A$-packet. Ignoring this obstacle, the question of whether one can isolate representations within a local $A$-packet by Hecke operators in $\bG(\Z_p)$ appears:

\begin{question} Let $G$ be a connected reductive group over a $p$-adic field $F$, and let $K\subset G(F)$ be a special maximal compact subgroup. Let $\psi$ be an $A$-parameter for $G$, with associated $A$-packet $\Pi(\psi)=\{\pi_1,\ldots,\pi_k\}$. Are $\pi_1|_K,\ldots,\pi_k|_K$ linearly independent in the Grothendieck group of admissible representations of $K$?
\end{question}

\section{Character identities satisfied by $f^{\bH}_{\tau,h}$: Proofs}\label{s:prove-conj}

  Let us keep the local notation of the last section (except that global notation
  will appear in the proof of Theorem \ref{t:conj-GLn}).
  Recall that the local Langlands correspondence for (a product of) general linear groups
  over a $p$-adic field is known thanks to Harris-Taylor and Henniart (\cite{HT01}, \cite{Hen00}).
  In what follows $\varphi_{\pi}$ denotes the $L$-parameter attached
  to an irreducible representation $\pi$ on such a group.

\begin{thm}\label{t:conj-GLn}
  Conjecture \ref{c:char-id} is true in all EL and quasi-EL cases for the trivial endoscopic triple $(\bH,s,\eta)=(\bG,1,\mathrm{id})$.
  In fact, for every irreducible representation $\pi$ of $\bG(\Q_p)$ with semisimple $L$-parameter $\lambda_\pi$,
\begin{equation}\label{e:conj-GLn} \tr(f^{\bG}_{\tau,h}|\pi)= \tr (\tau|(r_{-\mu}\circ \lambda_{\pi}|_{W_E})
|\cdot |_E^{-\lg \rho,\mu\rg}) \tr (h|\pi).\end{equation}
\end{thm}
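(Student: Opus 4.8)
The plan is to reduce the statement to the Lubin--Tate case already established in \cite{ScholzeGLn}, by decomposing the group $\bG(\Q_p)$ and the cocharacter $\mu$ into ``simple'' pieces and using the compatibility of the functions $\phi_{\tau,h}$ with products. In the EL (and quasi-EL) situation, $\bG(\Q_p)$ is, up to center, a product $\prod_i \GL_{n_i}(F_i)$ of general linear groups over finite extensions $F_i/\Q_p$, and the deformation-space construction of $\phi_{\tau,h}$ in \cite{ScholzeDefSpaces} respects the corresponding product decomposition of the PEL datum. So first I would set up this decomposition carefully: the $p$-divisible group whose deformations are being parametrized splits (isoclinic-by-isoclinic, or according to the decomposition of $V\otimes\Q_p$ as a module over the relevant algebra), $\mu$ decomposes accordingly, and both $\phi_{\tau,h}$ and the representation $r_{-\mu}$ factor as (essentially) tensor products over the pieces. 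This reduces \eqref{e:conj-GLn} to the case of a single factor, i.e. to $\bG = \GL_n$ over a $p$-adic field with a cocharacter $\mu$ of $\GL_{n,\ol{\Q}_p}$, where $r_{-\mu}$ is an explicit representation of ${}^L\GL_n$ built from exterior powers of the standard representation.

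The next step is to handle a single $\GL_n$ with a general minuscule (or even non-minuscule) $\mu$. Here the key input is that the function $\phi_{\tau,h}$, by its very definition via the cohomology of Lubin--Tate-type deformation spaces, is built to satisfy exactly the character identity of \cite{ScholzeGLn}, Theorem 1.2 --- but only for $\mu$ of the form $(1,0,\dots,0)$, the Lubin--Tate case. For general $\mu$ I would proceed by expressing the relevant deformation space, or rather the function $\phi_{\tau,h}$ it produces, in terms of the Lubin--Tate ones: concretely, one embeds the EL datum with cocharacter $\mu$ into a product of ``standard'' EL data (each contributing a $\GL_1$-twist or a single coordinate of $\mu$), uses that twisted orbital integrals of $\phi_{\tau,h}$ are multiplicative along such a product (Proposition of \cite{ScholzeDefSpaces} of the kind already cited in the proof of Lemma \ref{l:conj-unramified}), and then invokes linearity of the trace map $\pi\mapsto\tr(f^{\bG}_{\tau,h}|\pi)$ together with compatibility of local Langlands with parabolic induction. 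On the Galois side, $r_{-\mu}\circ\lambda_\pi$ decomposes compatibly because $r_{-\mu}$ for a sum of minuscule coweights is (a summand of) the tensor product of the standard-representation pieces, and local-global-free compatibility of $\sigma_\ell$ for $\GL_n$ with products and with the local Langlands correspondence gives the matching identity of traces of $\tau$.

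A cleaner route, which I would try first, is to bypass deformation spaces almost entirely after the Lubin--Tate case: one knows $\phi_{\tau,h}$ only through its twisted orbital integrals, hence through its trace against admissible representations (by the density/linear-independence of twisted characters, or via base change as in \cite{ArthurClozel}); so it suffices to check \eqref{e:conj-GLn} on a spanning set of $\pi$, e.g.\ on irreducible subquotients of induced representations from supercuspidals, and then reduce by induction on $n$ to the supercuspidal case. For supercuspidal $\pi$ of $\GL_n$ one still needs the Lubin--Tate input plus the explicit shape of $r_{-\mu}$; this is where the computation of \cite{ScholzeGLn} does the real work. The normalization factor $|\cdot|_E^{-\lg\rho,\mu\rg}$ and the precise Frobenius-eigenvalue bookkeeping (the shift by $\lg\rho,\mu\rg$ versus $\dim\Sh/2$, and Kottwitz's versus the Rapoport--Zink normalization of $\mu$, flagged in the footnotes) must be tracked with care through the product decomposition.

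The main obstacle I anticipate is exactly this last point combined with the non-minuscule case: while minuscule $\mu$ reduces transparently to Lubin--Tate, a general $\mu$ forces one either to use non-minuscule deformation spaces (where the geometry, and the identification of $\phi_{\tau,h}$, is more delicate) or to argue that the identity for all minuscule summands, via multiplicativity and the representation-theoretic decomposition of $r_{-\mu}$, pins down the general case --- and the latter requires knowing that $r_{-\mu}$ is really the full tensor-product summand and that no cross terms spoil the factorization of the twisted orbital integrals. Verifying that the $\GL_1$-factor (the similitude/central character twist $\chi$) and the $\GL_n$-factors genuinely decouple in the construction of $\phi_{\tau,h}$, with the correct twist by $\chi$ on both sides, is the technical heart; everything else is bookkeeping with local Langlands for $\GL_n$ and the results of \cite{ScholzeGLn}.
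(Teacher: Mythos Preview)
Your approach is genuinely different from the paper's, and it has a real gap at precisely the point you flag as an obstacle.

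The paper does \emph{not} reduce to the Lubin--Tate result of \cite{ScholzeGLn}; it gives an independent \emph{global} argument. After the same preliminary reduction to a simple (quasi-)EL datum, the paper globalizes the local datum to a PEL datum for a compact unitary Shimura variety with trivial endoscopy as in Section~\ref{s:pseudostab}. It then first treats those $\pi_p$ that arise as the $p$-component of a suitable global automorphic representation $\pi$: the pseudostabilized identity of Theorem~\ref{MainTheorem3}, together with Corollary~\ref{cor:RigidAtp}, gives a relation between $\tr(f^{\bG}_{\tau,h}|\pi_p)$ and $\tr(\tau|H^*_\xi(\pi_f))$. To evaluate the latter, one uses the easy \emph{unramified} case (Lemma~\ref{l:conj-unramified}) at auxiliary split places, the Harris--Taylor Galois representation (Corollary~\ref{cor:ExGalRepr}), and Chebotarev density to determine $H^*_\xi(\pi_f)|_{W_{E_\fkp}}$ at the given ramified prime $\fkp$ as well; the identity \eqref{e:conj-GLn} then drops out. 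Finally a density argument (trace Paley--Wiener \cite{BDK86} plus \cite[Thm~4.4]{Shin-Plan}) removes the global hypothesis on $\pi_p$.

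The gap in your purely local route is the passage from a general signature to the Lubin--Tate one. After reducing to a single simple factor $\Res_{K/\Q_p}\GL_n$, the cocharacter $\mu$ still has arbitrary signature $(d_\sigma,n-d_\sigma)$ at each embedding $\sigma:K\hookrightarrow\ol{\Q}_p$, and the associated deformation space is \emph{not} a product of Lubin--Tate towers; no multiplicativity statement for $\phi_{\tau,h}$ in \cite{ScholzeDefSpaces} expresses its twisted orbital integrals in terms of Lubin--Tate ones. Your assertion that ``minuscule $\mu$ reduces transparently to Lubin--Tate'' is not correct: signature $(d,n-d)$ with $1<d<n-1$ is a genuinely different deformation problem, and no product decomposition of the EL datum changes that. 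The ``cleaner route'' via parabolic induction does not help either: reducing to supercuspidal $\pi$ on $\GL_n$ still leaves you needing \eqref{e:conj-GLn} for that $\pi$ and the \emph{same} general $\mu$, which is not covered by \cite{ScholzeGLn}. The global argument is exactly the device that bypasses this, transporting the Satake-based unramified identity to ramified places along the Galois representation.
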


\begin{rem}\label{r:quasi-EL}
 We recall that in \cite{ScholzeDefSpaces}, quasi-EL data were defined as those PEL data where the field $F$ decomposes as $F_0\times F_0$ with the involution $\ast$ acting by $(x,y)^\ast = (y,x)$. We note that there is a bijection between EL data and quasi-EL data, and that quasi-EL data come about by localization of global PEL data of type A at a split place.
\end{rem}

\begin{proof}
The conjectures in an EL case and the corresponding quasi-EL case are equivalent by Proposition 4.10 of \cite{ScholzeDefSpaces}.
Moreover, the conjecture in the EL case reduces to the simple EL case by Proposition 4.9 of \cite{ScholzeDefSpaces}.
By Morita equivalence, we may moreover assume that $V=B$ as left $B$-module, where $B$ is a matrix algebra over $F$.

Summarizing, we may assume that we are in the quasi-EL case corresponding to a simple EL case,
where $B=B_0\times B_0$ is a matrix algebra over $F=F_0\times F_0$, following the notation for the quasi-EL case in \cite{ScholzeDefSpaces}.
The involution on $B$ is given by $(x,y)^\ast = (y,x)$, and $V=B$ as left $B$-module.

  For the proof we may fix $\tau$ and $h$. Also fix an algebraic representation $\xi$ of $\bG$ of regular highest weight.
  As our proof will be global in nature, we will adopt global notation as needed.
  Let us globalize the local datum over $\Q_p$ to a global datum $(B,\cO_B,F,V,\Lambda,C,\cO_C,\bh)$
  and a $*$-hermitian form on $V$ for a compact unitary Shimura variety with trivial endoscopy as in
  Section \ref{s:pseudostab}, satisfying the assumptions of Theorem \ref{t:cond-bc} as done in
  \cite[Prop 8.1.3]{Far04}. In particular $F$ is a CM field containing an imaginary quadratic field $\cK$, $p$ splits in $\cK$, $F$ decomposes
  as the product of its totally real subfield $F_0$ and $\cK$, and $p$ is inert in $F_0$, with its local field $F_{0,p}$ being
  isomorphic to what was previously called $F_0$. After basechange from $\Q$ to $\Q_p$, we get the previous local PEL data
  and there exists a prime $\fkp$ of the reflex field $E$ of the datum
  such that the completion $E_{\fkp}$ is isomorphic to the local reflex field $E$ occuring in \eqref{e:conj-GLn},
  compatible with the cocharacter $\mu$. Let $\bG$ denote the reductive group over $\Z_{(p)}$ for the global datum,
  and $\pi_p$ denote $\pi$ of \eqref{e:conj-GLn}.

  The first step is to verify \eqref{e:conj-GLn} assuming
  \begin{itemize}
    \item $\pi_p$ appears as the $p$-component of an irreducible automorphic representation
  $\pi$ of $\bG(\A)$ such that $\pi_\infty\in \Pi^2(\varphi_\xi)$ and $\pi_v$ is supercuspidal
  for some prime $q\neq p$ split in $\cK$ and some place $v|q$ of $F$.
  \end{itemize}

  Let
  $$H^\ast_\xi(\pi_f) = \Hom_{\bG(\A_f)}(\pi_f,H^\ast_\xi).$$
  Using Corollary \ref{cor:RigidAtp}, it follows from Theorem \ref{MainTheorem3} that
  \begin{equation}\label{e:global-trace}
  N \tr(\tau | H^\ast_\xi(\pi_f)) \tr(h|\pi_p) = \dim H^\ast_\xi(\pi_f) \tr(f_{\tau,h}^{\bG} | \pi_p).
  \end{equation}

  If $\pi_p$ is unramified, one may take $h=\triv_{\bG(\Z_p)}$ and use Lemma \ref{l:conj-unramified} to determine the restriction
  $H^\ast_\xi(\pi_f)|_{W_{E_\fkp}}$, cf. Theorem 1 in \cite{KottwitzLambdaAdic}. This gives
 \begin{equation}\label{e:local-global-compat}
 H^*_\xi(\pi_f)|_{W_{E_\fkp}}=a(\pi_f) (r_{-\mu}\circ \lambda_{\pi_p}|_{W_{E_{\fkp}}})|\cdot|_{\fkp}^{-\dim\Sh/2}
 \end{equation}
 for some integer $a(\pi_f)\in \Z$. This equation implies in particular that $N= \dim r_{-\mu}$ and $\dim H^\ast_\xi(\pi_f) = a(\pi_f) N$.
 We also recall that $\dim \Sh = \lg 2\rho,\mu\rg$. This argument works for any unramified split place $p$ (not only the one fixed in advance).
 Using Corollary \ref{cor:ExGalRepr}, one can build a
 representation of $\Gal(\bar{\Q}/E \cK)$ that agrees with $H^*_\xi(\pi_f)|_{\Gal(\bar{\Q}/E \cK)}$
 at all unramified split places; we refer to the proof of \cite[Thm A.7.2]{Far04} for the precise construction.
 From the Chebotarev density theorem, it follows that they agree everywhere, which shows that
 \eqref{e:local-global-compat} continues to hold at ramified split places, in particular at our given $\fkp$.

 Now one may combine \eqref{e:global-trace} with \eqref{e:local-global-compat} to deduce \eqref{e:conj-GLn}.

  It remains to get rid of the hypothesis on $\pi_p$. Consider the two linear forms
  $\cF_1$ (resp. $\cF_2$) on the Grothendieck group
  of admissible representations of $\bG(\Q_p)$ mapping each irreducible $\pi_p$
  to the value of the left (resp. right) hand side of \eqref{e:conj-GLn}.
  It is easy to see that both $\cF_1$ and $\cF_2$ are trace functions in the context
  of the trace Paley-Wiener theorem of \cite{BDK86}. (This is a tautology for $\cF_1$.)
  Since \cite[Thm 4.4]{Shin-Plan} shows that a dense subset of the unitary dual of $\bG(\Q_p)$
  satisfies the above hypothesis, we know that $\cF_1=\cF_2$ on that dense subset.
  Since $\cF_1$ and $\cF_2$ are regular functions on (an infinite disjoint union of) algebraic varieties,
  we deduce that $\cF_1(\pi_p)=\cF_2(\pi_p)$ for all $\pi_p$.
\end{proof}

At this point, let us give a corollary of the proof. We note that the identities of Conjecture \ref{c:char-id}
determine the orbital integrals of $f_{\tau,h}^\bG$ uniquely. On the other hand, the orbital integrals
of $f_{\tau,h}^\bG$ determine the twisted orbital integrals of $\phi_{\tau,h}$ uniquely. But, cf. the remarks
after Theorem 5.7 of \cite{ScholzeDefSpaces}, there was some freedom in the choice of $\phi_{\tau,h}$.
This gives the following corollary.

\begin{cor}\label{cor:AllPDivGroupsOccur} Let $\Sh$ be compact unitary group Shimura variety without endoscopy as in Section \ref{s:pseudostab},
satisfying the hypothesis of Theorem \ref{t:cond-bc}. Let $p$ be a place split in $\cK$, let $\fkp|p$ a place of $E$ and let $\mathcal{M}_{K^p}/\mathcal{O}_{E_\fkp}$ be the integral model of (a finite disjoint union of copies of) $\Sh_{\bG(\Z_p)K^p}$ over $\mathcal{O}_{E_\fkp}$ defined in \cite{ScholzeDefSpaces}.

Let $\mathcal{D}$ be the corresponding local data of quasi-EL type, associated to data $\mathcal{D}_0$ of EL type. Then any
$p$-divisible group $\underline{\overline{H}}$ with $\mathcal{D}$-structure over $\bar{\mathbb{F}}_p$ which admits a deformation to characteristic $0$ occurs as the $p$-divisible group with $\mathcal{D}$-structure associated to some point over $\bar{\mathbb{F}}_p$ on $\mathcal{M}_{K^p}$, for some $K^p$.
\end{cor}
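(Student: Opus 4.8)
The plan is to exploit the freedom, noted after Theorem 5.7 of \cite{ScholzeDefSpaces}, in the choice of $\phi_{\tau,h}$, and to pin it down using the character identities.

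Recall that in \cite{ScholzeDefSpaces} the function $\phi_{\tau,h}$ is assembled with one summand for each isomorphism class of $p$-divisible group $\underline{\overline{H}}$ with $\mathcal{D}$-structure over $\bar{\mathbb{F}}_p$ admitting a deformation to characteristic $0$: for such $\underline{\overline{H}}$ the deformation space $\mathrm{Def}_{\underline{\overline{H}}}$ over $\cO_{E_\fkp}$ has a nonempty generic fibre, whose $\ell$-adic cohomology — with its $\tau$-action, twisted by the level datum $h$ — is the contribution of $\underline{\overline{H}}$ to $\phi_{\tau,h}$, the resulting function being determined only up to one with vanishing twisted orbital integrals. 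Now $\tr(\tau\times hf^p|H^\ast_\xi)$ is computed in Theorem \ref{MainTheorem1} by counting the points of $\mathcal{M}_{K^p}(\bar{\mathbb{F}}_p)$ — the completed local ring at a point $x$ being $\mathrm{Def}_{\underline{\overline{H}}_x}$ by construction of the integral model — weighted by the cohomology of their deformation spaces; hence the identity of Theorem \ref{MainTheorem1}, together with Theorem \ref{MainTheorem3}, remains valid if $\phi_{\tau,h}$ is replaced by the function $\phi'_{\tau,h}$ assembled from only those $\underline{\overline{H}}$ that occur on some $\mathcal{M}_{K^p}$, since only these enter the point count.

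I would then observe that the character identity pins down the twisted orbital integrals of any such valid choice. By Theorem \ref{MainTheorem3}, $N\tr(\tau\times hf^p|H^\ast_\xi)=\tr(f^{\bG}_{\tau,h}f^p|H^\ast_\xi)$ for the base change transfer $f^{\bG}_{\tau,h}$ of $\phi_{\tau,h}$, and the proof of Theorem \ref{t:conj-GLn} — which uses only Theorems \ref{MainTheorem1} and \ref{MainTheorem3} together with local--global compatibility — evaluates $\tr(f^{\bG}_{\tau,h}|\pi)$ in closed form for every irreducible $\pi$, and applies verbatim to $\phi'_{\tau,h}$ in place of $\phi_{\tau,h}$. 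By the trace Paley--Wiener theorem this determines the orbital integrals of both base change transfers, which therefore coincide; and since $\bG_{\Q_p}$ is a product of Weil restrictions of general linear groups, base change is injective on twisted orbital integrals, so $\phi_{\tau,h}$ and $\phi'_{\tau,h}$ have the same twisted orbital integrals. Comparing the two deformation-space expressions for these, the contribution of every $\underline{\overline{H}}$ not occurring on any $\mathcal{M}_{K^p}$ must vanish identically in $\tau$ and $h$.

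It remains to rule this out, i.e. to show that the contribution of $\mathrm{Def}_{\underline{\overline{H}}}$ is nonzero whenever $\underline{\overline{H}}$ admits a deformation to characteristic $0$. Its generic fibre is then a nonempty smooth rigid variety of finite dimension with finite-dimensional $\ell$-adic cohomology, so for $h=\triv_{\bG(\Z_p)}$ and $\tau=\mathrm{Frob}^N$ the contribution equals $\sum_i(-1)^i\tr(\mathrm{Frob}^N|H^i)$, a finite sum of $N$-th powers of Weil numbers. For $N$ in a suitable congruence class the weight-zero part, from $H^0$ of the nonempty generic fibre, contributes a positive integer; by purity all of $H^{\geq 1}$ has strictly positive weight, so if it contributes nontrivially the sum is unbounded in $N$ and cannot vanish for all large such $N$, while if $H^{\geq 1}$ contributes nothing the sum is that positive integer. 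In either case the contribution is nonzero for suitable $\tau$, $h$. Therefore every $p$-divisible group with $\mathcal{D}$-structure over $\bar{\mathbb{F}}_p$ admitting a deformation to characteristic $0$ occurs on $\mathcal{M}_{K^p}$ for a suitable $K^p$. The step I expect to be the main obstacle is the bookkeeping underlying the second half of the first paragraph: tracking, through the Langlands--Kottwitz point count and the Honda--Tate parametrization, which individual deformation-space contributions appear in each $TO_{\delta\sigma}(\phi_{\tau,h})$ and how the occurring/non-occurring dichotomy is reflected there, together with the uniform nonvanishing just sketched.
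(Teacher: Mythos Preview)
Your overall strategy matches the paper's: exploit the freedom in the choice of $\phi_{\tau,h}$ noted after Theorem~5.7 of \cite{ScholzeDefSpaces}, and use the character identities of Theorem~\ref{t:conj-GLn} (together with trace Paley--Wiener and injectivity of base change for $\GL_n$) to show that any two valid choices have the same twisted orbital integrals. Up to that point your argument is the paper's.

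The divergence, and the gap, is in the final step. You try to show that the deformation-space contribution of a non-occurring $\underline{\overline{H}}$ is nonzero by a weight/purity argument on the $\ell$-adic cohomology of its rigid-analytic generic fibre. But these generic fibres are non-proper rigid spaces, and purity of Frobenius weights for their cohomology is simply not available in this generality; your assertion that $H^{\geq 1}$ has strictly positive weight is exactly what would need to be proved, and there is no Weil-conjecture input here to supply it. You also flag yourself the bookkeeping difficulty of relating individual ``contributions'' to individual twisted orbital integrals; this is not resolved in what you wrote.

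The paper sidesteps all of this. Rather than compare $\phi_{\tau,h}$ with a cohomologically defined $\phi'_{\tau,h}$, one takes the two \emph{extreme} valid choices $\phi_{\tau,h}^{(0)}$ and $\phi_{\tau,h}^{(1)}$, setting the value to $0$, resp.\ $1$, at every $\delta$ on which the conditions of \cite{ScholzeDefSpaces} impose nothing (by Proposition~3.10 of \cite{ScholzeDefSpaces}, these are exactly the $\delta$ parametrizing the $\underline{\overline{H}}$ in question). Both satisfy the conditions, so by the argument you gave their twisted orbital integrals coincide. Hence $\phi_{\tau,h}^{(1)}-\phi_{\tau,h}^{(0)}$, which is the characteristic function of an open bounded subset of $\bG(\Q_{p^r})$, has all twisted orbital integrals equal to zero. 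But for any $\delta$ in that set, $TO_{\delta\sigma}$ of its characteristic function is the volume of a nonempty open set, hence strictly positive. The set is therefore empty, which is the desired conclusion --- obtained with no cohomological or purity input whatsoever.
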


\begin{proof} Note that a $p$-divisible group $\underline{\overline{H}}$ with $\mathcal{D}$-structure which admits a deformation to characteristic $0$, i.e. such that $X_{\overline{\underline{H}}}\neq \emptyset$, is parametrized by some $\delta\in \bG(W(\bar{\F}_p)[\frac 1p])$, cf. Proposition 3.10 of \cite{ScholzeDefSpaces}.

We can define two functions $\phi_{\tau,h}^{(0)}$ and $\phi_{\tau,h}^{(1)}$, both satisfying the conditions after Theorem 5.7
of \cite{ScholzeDefSpaces}, but such that $\phi_{\tau,h}^{(0)}$, resp. $\phi_{\tau,h}^{(1)}$, is $0$, resp. $1$, on all $\delta$ for which
nothing is imposed by these conditions. We want to prove that $\phi_{\tau,h}^{(0)} = \phi_{\tau,h}^{(1)}$.

Now by what we have shown, necessarily all twisted orbital integrals of $\phi_{\tau,h}^{(1)} - \phi_{\tau,h}^{(0)}$ vanish. But this is the characteristic function of some open bounded subset, and it follows that this subset has to be trivial.
\end{proof}

In the case that the local PEL data are unramified, this implies nonemptiness of Newton strata. Recall that the Newton stratification is a stratification of $\mathcal{M}_{K^p}\otimes \bar{\mathbb{F}}_p$ into locally closed subvarieties $\mathcal{M}_{K^p,b}$, $b\in B(\bG_{\Q_p},\mu)$, parametrized by a certain subset of $\sigma$-conjugacy classes $B(\bG_{\Q_p},\mu)\subset B(\bG_{\Q_p})$, cf. the work of Rapoport and Richartz, \cite{RapoportRichartz}. The question of nonemptiness of Newton strata is a deep question in the theory of the reduction of Shimura varieties. We refer to the survey article of Rapoport, \cite{RapoportGuide}, for an extensive discussion of related work. Recently, Viehmann and Wedhorn, \cite{ViehmannWedhorn}, have used local geometric methods to prove nonemptiness of Newton strata in many cases, including those that we handle here. Our method is entirely different, in that it uses global and automorphic arguments, similar to earlier work of Fargues, \cite{Far04}, showing that the basic locus is nonempty.

\begin{cor}\label{cor:NewtonNonempty} Assume that in the situation of Corollary \ref{cor:AllPDivGroupsOccur}, the field $F$ is unramified at $p$, so that $\bG$ is unramified at $p$ and $\bG(\Z_p)$ is a hyperspecial maximal compact subgroup. Then for all $b\in B(\bG_{\Q_p},\mu)$, the Newton stratum $\mathcal{M}_{K^p,b}$ is nonempty.
\end{cor}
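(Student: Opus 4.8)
The plan is to deduce nonemptiness of the Newton stratum $\mathcal{M}_{K^p,b}$ from Corollary \ref{cor:AllPDivGroupsOccur} by exhibiting, for each $b\in B(\bG_{\Q_p},\mu)$, a $p$-divisible group $\underline{\overline{H}}$ with $\mathcal{D}$-structure over $\bar{\F}_p$ whose associated isocrystal has Newton point $b$ and which admits a deformation to characteristic $0$. First I would recall that since $F$ is unramified at $p$, the local datum $\mathcal{D}$ is of unramified (quasi-)EL type, and the Newton stratification is indexed precisely by $B(\bG_{\Q_p},\mu)$, the set of $\sigma$-conjugacy classes $[b]$ with $\kappa_{\bG}(b)=-\mu_1$ (in the appropriate normalization) and $\bar\nu_b\le \bar\mu$ in the usual dominance order; this is the description of Rapoport--Richartz. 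The point $\underline{\overline{H}}$ on $\mathcal{M}_{K^p}\otimes\bar\F_p$ lying in the stratum indexed by $[b]$ is, up to isogeny, the one parametrized by a representative $\delta\in \bG(W(\bar\F_p)[\tfrac1p])$ of $[b]$.

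The key step is therefore to check that every class $b\in B(\bG_{\Q_p},\mu)$ is represented by some $\delta$ for which the deformation space $X_{\underline{\overline{H}}}$ is nonempty, i.e. which admits a lift to characteristic $0$ compatible with the $\mathcal{D}$-structure and the Hodge filtration of type $\mu$. This is exactly the content of Mazur's inequality / the group-theoretic nonemptiness statement for affine Deligne--Lusztig sets at a hyperspecial level: the condition "$X_{\underline{\overline{H}}}\neq\emptyset$" translates (via Proposition 3.10 of \cite{ScholzeDefSpaces}) into $\delta$ lying in the appropriate $\bG(W(\bar\F_p))$-double coset governed by $\mu$, and the fact that every $b\in B(\bG_{\Q_p},\mu)$ admits such a representative is precisely Mazur's inequality in the form proved by Rapoport--Richartz (and Gashi, Lucarelli) for unramified groups. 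Concretely: given $b\in B(\bG_{\Q_p},\mu)$, pick $\delta$ in its $\sigma$-conjugacy class with $\delta\in \bG(\Z_p)\,p^{\mu}\,\bG(\Z_p)$ (or a translate thereof); then the corresponding $\underline{\overline{H}}$ admits a deformation to characteristic $0$.

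Having produced such a $\delta$ with $X_{\underline{\overline{H}}}\neq\emptyset$, I would invoke Corollary \ref{cor:AllPDivGroupsOccur} directly: it asserts that any $p$-divisible group with $\mathcal{D}$-structure over $\bar\F_p$ which admits a deformation to characteristic $0$ actually occurs at some $\bar\F_p$-point of $\mathcal{M}_{K^p}$ for suitable $K^p$. Since the Newton point of the isocrystal of $\underline{\overline{H}}$ is by construction $b$, that point lies in $\mathcal{M}_{K^p,b}$, so this stratum is nonempty. Finally, one remarks that the stratum for a fixed small enough $K^p$ is nonempty as soon as it is nonempty for some $K^p$, since the transition maps in the tower are finite surjective; alternatively one simply notes that the statement as phrased quantifies over $K^p$ implicitly through Corollary \ref{cor:AllPDivGroupsOccur}.

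I expect the main obstacle to be purely a matter of bookkeeping rather than a genuine difficulty: namely, matching the normalization of $\mu_1$ and $\kappa_{\bG}$ used here (Kottwitz's, as emphasized in Section \ref{s:Kottwitz-triples}) with the parametrization of Newton strata in \cite{RapoportRichartz}, and verifying that "admits a deformation to characteristic $0$" is equivalent to the group-theoretic condition defining $B(\bG_{\Q_p},\mu)$ — i.e. that Mazur's inequality is an equality of index sets in the unramified case. Both points are standard in the literature, so the proof is short once the dictionary is set up; the real content has already been absorbed into Corollary \ref{cor:AllPDivGroupsOccur}.
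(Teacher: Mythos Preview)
Your proposal is correct and follows essentially the same route as the paper: reduce to producing a $p$-divisible group with $\mathcal{D}$-structure of the prescribed Newton class that lifts to characteristic $0$, translate this into nonemptiness of the relevant affine Deligne--Lusztig set $X(b,\mu)_K$, and invoke the converse to Mazur's inequality in the unramified case. The paper phrases this via \cite{RapoportGuide} and attributes the needed nonemptiness to the theorem of Kottwitz--Rapoport \cite{KottwitzRapoportExFCristals} (which suffices here since the local group is of $\GL_n$-type); your citation of Rapoport--Richartz is slightly off, as that paper establishes the forward direction of Mazur's inequality rather than the converse you need, but Gashi/Lucarelli or Kottwitz--Rapoport give the correct input.
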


\begin{proof} We only have to show that there is some $p$-divisible group with $\mathcal{D}$-structure that has the correct Newton polygon. In the notation of \cite{RapoportGuide}, this question is equivalent to the question $X(b,\mu)_K\neq \emptyset$ by Corollary 3.12 of \cite{RapoportGuide}. This is resolved by a theorem of Kottwitz and Rapoport in the case considered here, \cite{KottwitzRapoportExFCristals}, cf. Theorem 5.5 in \cite{RapoportGuide}.
\end{proof}

Another interesting application of Corollary \ref{cor:AllPDivGroupsOccur} is the following algebraization result that works in the ramified case, and seems to be hard to prove by other methods.

\begin{cor}\label{cor:ELAlg} Let $\mathcal{D}$ be data of EL type, and let $\underline{\overline{H}}$ be any $p$-divisible
group with $\mathcal{D}$-structure over $\bar{\mathbb{F}}_p$. Then the deformation spaces
$X_{\underline{\overline{H}},K}/X_{\underline{\overline{H}}}$ defined in \cite{ScholzeDefSpaces}
are algebraizable as in Theorem 2.9 of \cite{ScholzeDefSpaces}.
\end{cor}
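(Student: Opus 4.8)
The plan is to combine Corollary \ref{cor:AllPDivGroupsOccur} with Theorem 2.9 of \cite{ScholzeDefSpaces}. First one disposes of a trivial case: if $\underline{\overline{H}}$ does not admit a deformation to characteristic $0$, then $X_{\underline{\overline{H}}}=\emptyset$ and there is nothing to prove, so we may assume $X_{\underline{\overline{H}}}\neq\emptyset$. Next, using the product-decomposition and Morita-equivalence reductions of \cite{ScholzeDefSpaces} underlying Propositions 4.9 and 4.10 there, exactly as in the proof of Theorem \ref{t:conj-GLn}, I would reduce to the case where $\mathcal{D}$ is the quasi-EL datum attached to a \emph{simple} EL datum: a decomposition of an EL datum induces a decomposition of the deformation towers $X_{\underline{\overline{H}},K}/X_{\underline{\overline{H}}}$ into a product, Morita equivalence does not change them, and the EL/quasi-EL dictionary identifies $p$-divisible groups with structure, hence their deformation towers, via duality.

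Then I would globalize. Following \cite[Prop.~8.1.3]{Far04}, again as in the proof of Theorem \ref{t:conj-GLn}, one embeds the local datum into a global PEL datum of the kind considered in Section \ref{s:pseudostab}, satisfying the hypotheses of Theorem \ref{t:cond-bc}, with a prime $\fkp\mid p$ of the reflex field $E$ such that $E_\fkp$ and $\mu$ match the given local data. This produces a compact unitary group Shimura variety without endoscopy, with integral model $\mathcal{M}_{K^p}/\mathcal{O}_{E_\fkp}$, whose local datum at $p$ is the given $\mathcal{D}$. Since $\underline{\overline{H}}$ admits a deformation to characteristic $0$, Corollary \ref{cor:AllPDivGroupsOccur} then provides an $\bar{\mathbb{F}}_p$-point $x$ of $\mathcal{M}_{K^p}$, for some $K^p$, whose associated $p$-divisible group with $\mathcal{D}$-structure is isomorphic to $\underline{\overline{H}}$. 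By Theorem 2.9 of \cite{ScholzeDefSpaces}, the existence of such a point on the special fibre of the integral model identifies the tower $X_{\underline{\overline{H}},K}/X_{\underline{\overline{H}}}$ with the formal completion of the tower of finite covers of $\mathcal{M}_{K^p}$ along the locus lying over $x$, which is a tower of $\mathcal{O}_{E_\fkp}$-schemes of finite type; hence it is algebraizable in the sense of Theorem 2.9 of \cite{ScholzeDefSpaces}.

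The main obstacle does not lie in this corollary: all the real work is in Corollary \ref{cor:AllPDivGroupsOccur}, which itself rests on the rigidity of the twisted orbital integrals of $\phi_{\tau,h}$ extracted from the global trace-formula comparison, and in Theorem 2.9 of \cite{ScholzeDefSpaces}. The only point here that requires a little care is to check that the reductions to simple EL data and the passage between EL and quasi-EL data are compatible with forming the deformation towers $X_{\underline{\overline{H}},K}$ and with the algebraization statement; but this compatibility is already built into the framework of \cite{ScholzeDefSpaces}, so no new difficulty arises.
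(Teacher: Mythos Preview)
Your proposal is correct and follows essentially the same route as the paper's proof: reduce to simple EL data via product decomposition and Morita equivalence, pass to the associated quasi-EL datum, globalize to a compact unitary Shimura variety without endoscopy, and then invoke Corollary \ref{cor:AllPDivGroupsOccur} to realize $\underline{\overline{H}}$ in the special fibre so that Theorem 2.9 of \cite{ScholzeDefSpaces} yields algebraization. The paper's proof is more terse (it folds the globalization step into the reference to Corollary \ref{cor:AllPDivGroupsOccur} and does not separately discuss the case $X_{\underline{\overline{H}}}=\emptyset$), but the content is the same.
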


\begin{proof} It suffices to consider the case of simple EL data, as in the general case everything decomposes
into a product. By Morita equivalence, we can assume that $V=B$ again. Changing to quasi-EL data, we may use
Corollary \ref{cor:AllPDivGroupsOccur} to embed the situation into a Shimura variety, which provides the desired algebraization.
\end{proof}

Now we continue with the proof of Conjecture \ref{c:char-id} in EL cases.

\begin{cor}\label{c:conj-GLn}
    Conjecture \ref{c:char-id} is true in all EL and quasi-EL cases
    for general endoscopic triples $(\bH,s,\eta)$ of $\bG$, i.e. for all irreducible representations $\pi_{\bH}$ of $\bH(\Q_p)$ with
    semisimple $L$-parameter $\lambda_{\pi_\bH}$, we have
    \[
    \tr(f^{\bH}_{\tau,h}|\pi_{\bH})=
  \tr (s^{-1}\tau|(r_{-\mu}\circ \eta\lambda_{\pi_\bH}|_{W_E})|\cdot |_E^{-\lg \rho,\mu\rg}) \tr(h^{\bH}|\pi_{\bH})\ .
  \]
\end{cor}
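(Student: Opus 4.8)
The plan is to deduce the general statement from Theorem~\ref{t:conj-GLn} (the trivial-triple case), exploiting that in all EL and quasi-EL situations $\bH(\Q_p)$ is again a product of general linear groups over $p$-adic fields, so that its local Langlands correspondence is available, all $L$-packets are singletons, and endoscopic transfer to $\bH$ is governed by the well-understood compatibility of that correspondence with parabolic induction. First I would run the same reductions as in the proof of Theorem~\ref{t:conj-GLn}: by Propositions~4.9 and~4.10 of \cite{ScholzeDefSpaces} together with Morita equivalence, one may assume $\bG$ arises from a simple quasi-EL datum, so that $\bG(\Q_p)$ is a product of groups $\GL_n(F_i)$. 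For such $\bG$, an endoscopic triple $(\bH,s,\eta)$ has $\hat\bH$ equal to (the dual of) a Levi of $\bG$, a product of general linear groups, and $\bH(\Q_p)$ is likewise a product of general linear groups over $p$-adic fields; hence the identity to be proved is a genuine identity of ordinary character traces, with no stability subtleties, exactly as in Theorem~\ref{t:conj-GLn}. Next I would unwind the definition \eqref{e:SO-p} of $f^{\bH}_{\tau,h}$: since the $L$-morphism $\tilde\eta$ of \eqref{e:L-morphism-at-p} factors as $\tilde\eta=t\cdot(\xi\circ\eta)$, with $\xi\colon{}^L\bG\to{}^L R_r$ the base-change $L$-morphism of \S\ref{sub:tw-endos-at-p} and $\prod_i t_i=s$, transitivity of transfer for the canonical transfer factors (valid here because all groups in sight are quasisplit over $\Q_p$, the $t$-twist being precisely the $\lg\cdot,s\rg^{-1}$ normalization appearing in \eqref{e:SO-p}) shows that, up to the scalar $\mu_1(z)$ already built into the definition, $f^{\bH}_{\tau,h}$ is obtained from the base-change transfer $f^{\bG}_{\tau,h}$ of $\phi_{\tau,h}$ by an ordinary $\eta$-endoscopic transfer twisted in a manner determined by $s$.

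With this in hand the computation is routine. For $\bG$ a product of general linear groups and $\bH$ an endoscopic (hence Levi-type) subgroup, $\eta$-transfer is dual to normalized parabolic induction, so for an irreducible $\pi_{\bH}$ with $L$-parameter $\varphi_{\pi_{\bH}}$ one has, up to the $s$-twist, $\tr(f^{\bH}_{\tau,h}\,|\,\pi_{\bH})=\tr(f^{\bG}_{\tau,h}\,|\,\Pi)$, where $\Pi$ is the (possibly reducible) representation of $\bG(\Q_p)$ parabolically induced from $\pi_{\bH}$; its semisimple $L$-parameter is $\eta\circ\lambda_{\pi_{\bH}}$. Plugging the virtual representation $[\Pi]$ into \eqref{e:conj-GLn} — legitimate since that identity, proved for irreducibles, extends by linearity to the Grothendieck group, and all subquotients of $\Pi$ share the semisimple parameter $\eta\circ\lambda_{\pi_\bH}$ — gives $\tr(f^{\bG}_{\tau,h}\,|\,\Pi)=\tr(\tau\,|\,(r_{-\mu}\circ\eta\lambda_{\pi_{\bH}}|_{W_E})|\cdot|_E^{-\lg\rho,\mu\rg})\,\tr(h\,|\,\Pi)$, and $\tr(h\,|\,\Pi)=\tr(h^{\bH}\,|\,\pi_{\bH})$ because $h^{\bH}$ is an $\eta$-transfer of $h$. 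Restoring the $s$-twist replaces $\tau$ by $s^{-1}\tau$ in the first factor, which yields the asserted formula; the passage from irreducible $\pi_{\bH}$ to arbitrary ones is then the trace Paley-Wiener and Zariski-density argument of the proof of Theorem~\ref{t:conj-GLn}, now applied on $\bH$.

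The main obstacle is the bookkeeping in the middle step: making precise the transitivity of twisted and ordinary endoscopic transfer with the correct canonical normalizations of transfer factors, and — most importantly — tracking how the element $s\in Z(\hat\bH)$, which enters the transfer factor at $p$ through $\lg\alpha(\gamma_0;\delta),s\rg^{-1}$ in \eqref{e:twisted-trans-factor}, translates on the spectral side into the twist $\tau\mapsto s^{-1}\tau$ inside $r_{-\mu}\circ\eta\lambda_{\pi_{\bH}}|_{W_E}$. This is the same phenomenon as in Kottwitz's passage to \cite[(9.7)]{KottwitzStabilization} for general endoscopic triples and in the unramified case treated in Lemma~\ref{l:conj-unramified}. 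Once this dictionary is set up, everything else is the routine combination of Theorem~\ref{t:conj-GLn} with the known compatibility of the local Langlands correspondence for general linear groups with parabolic induction.
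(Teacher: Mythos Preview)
Your approach is essentially the same as the paper's: reduce to simple EL data so that $\bH$ is a Levi of $\bG$, factor the $\tilde\eta$-transfer as base change followed by a cocycle-twisted $\eta$-transfer, and then combine Theorem~\ref{t:conj-GLn} with the compatibility of endoscopic transfer with parabolic induction. The paper carries out precisely the step you flag as the main obstacle by introducing an explicit unramified cocycle $c$ with $c(\tau)=s^{-1}$ so that $\tilde\eta=\xi\circ\eta'$ with $\eta'=c\cdot\eta$, proving directly the orbital-integral identity $\chi_c(\gamma_{\bH})=\langle\alpha(\gamma;\delta),s\rangle$ via the Kottwitz map for $\Res_{K/\Q_p}\GL_n$, and also noting that $\chi_c$ is trivial on the support of $\eta^*(h)$ so that the $\chi_c$-twist disappears on the $h$-side (a point your sketch elides); your final Paley--Wiener/density step is unnecessary here since the argument already treats an arbitrary irreducible $\pi_{\bH}$ directly.
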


\begin{proof}
  Let $K$ be a finite extension of $\Q_p$. It suffices to consider the case of simple EL data with
  $B=K$, $V=K^n$, where $\bG\simeq \Res_{K/\Q_p} \GL_n$. In this case, any endoscopic triple
  $(\bH,s,\eta)$ of $\bG$ is of the form $\bH\simeq \Res_{K/\Q_p} (\prod_i \GL_{n_i})$
  for some $n_i\ge 1$ such that $\sum_i n_i=n$ and
 $$\eta:(\prod_i \GL_{n_i}(\C))^{\Hom(K,\ol{\Q}_p)}\rtimes W_{\Q_p}
   \ra \GL_n(\C)^{\Hom(K,\ol{\Q}_p)}\rtimes W_{\Q_p},$$
  is the canonical Levi embedding, i.e. we identify $\bH$ and $\hat{\bH}$ with
  Levi subgroups of $\bG$ and $\hat{\bG}$ via block diagonal embeddings.
  Moreover we may assume that $s\in Z(\hat{\bH})^{\Gamma(p)}$ as the general case is reduced to this
  case (cf. the last paragraph of \S\ref{sub:tw-endos-at-p}).
  Write $s=(s_i)_i\in Z(\hat{\bH})^{\Gamma(p)}=\prod_i \C^\times$
  (which embeds in the center of $(\prod_i \GL_{n_i}(\C))^{\Hom(K,\ol{\Q}_p)}$
  diagonally). Choose $t_0=(t_{0,i})_i\in Z(\hat{\bH})^{\Gamma(p)}$
  such that $t_0^r=s$. Let $c:W_{\Q_p}\ra Z(\hat{\bH})^{\Gamma(p)}$ be the
  cocycle $w\mapsto t_0^{-\ord(w)}$, where we normalize $\ord: W_E\ra \Z$ by requiring
  that it sends a geometric Frobenius element to $1$. Define $\eta':{}^L \bH \ra {}^L \bG$
  by $\eta'(g\rtimes w)=c(w)\eta(g\rtimes w)$.
  Recall that $R_r$ and $\tilde{\eta}$ were defined in \S\ref{sub:tw-endos-at-p}.
  Write $\xi:{}^L G \ra {}^L R_r$ for the base change $L$-morphism sending
  $g\rtimes w$ to $(g,g,...,g)\rtimes w$. Then it is straightforward to verify that
  \begin{equation}
    \tilde{\eta}=\xi\circ \eta'.
  \end{equation}
  The function $f^{\bH}_{\tau,h}$ is a twisted transfer of $\phi_{\tau,h}$ relative to $\tilde{\eta}$.
  We claim that it is also an $\eta'$-transfer of $f_{\tau,h}^\bG$ (denoted $(\eta')^*(f_{\tau,h}^\bG)$ below).

  Let us accept the claim for now and finish the proof.
  Note that an $\eta$-transfer of $f_{\tau,h}^\bG$, denoted $\eta^*(f_{\tau,h}^\bG)$,
   is given by the constant term along a parabolic
  subgroup with $\bH$ as a Levi component. It satisfies a character identity
   $$\tr (\eta^*(f_{\tau,h}^\bG)|\pi_{\bH})=\tr\left(f_{\tau,h}^\bG\left|\nind^{\bG(\Q_p)}_{\bH(\Q_p)}(\pi_{\bH})\right.\right)$$
  for every irreducible admissible representation $\pi_{\bH}$ of $\bH(\Q_p)$, where
  $\nind$ denotes the normalized parabolic induction (independent of the choice of parabolic).
  Let $\chi_c:H(\Q_p)\ra \C^\times$ be the character corresponding to $c\in H^1(W_{\Q_p},
  Z(\hat{\bH})^{\Gamma(p)})$.

  Since $\eta'=\eta\cdot c$, one can show that
  $(\eta')^* f^{\bG}_{\tau,h}$ may be given as $\chi^{-1}_c\cdot \eta^*(f_{\tau,h}^\bG)$.
  (The reason is that the Langlands-Shelstad transfer factor gets multiplied by $\chi^{-1}_c$
  when $\eta$ is multiplied by $c$.)
  As we can take $f^{\bH}_{\tau,h}=(\eta')^* f_{\tau,h}^\bG$ by the claim above,
  $$f^{\bH}_{\tau,h}(\gamma)=\eta^*(f_{\tau,h}^\bG)(\gamma)\chi^{-1}_c(\gamma), \quad\forall \gamma\in \bH(\Q_p).$$
  To put it in an invariant way,
    $$\tr(f^{\bH}_{\tau,h}|\pi_{\bH})=\tr(\eta^*(f_{\tau,h}^\bG)|\pi_{\bH}\otimes \chi_c)
  =\tr\left(f_{\tau,h}^\bG\left|\nind^{\bG(\Q_p)}_{\bH(\Q_p)}(\pi_{\bH}\otimes \chi_c)\right.\right).$$
  Note that all subquotients $\pi$ of $\nind^{\bG(\Q_p)}_{\bH(\Q_p)}(\pi_{\bH}\otimes \chi_c)$ give
  rise to the same semisimple $L$-parameter $\lambda_\pi = \eta\lambda_{\pi_{\bH}\otimes\chi_c}$. Hence, by
  \eqref{e:conj-GLn} the right hand side equals
   $$\tr(\tau|(r_{-\mu}\circ \eta\lambda_{\pi_{\bH}\otimes\chi_c}|_{W_{E}})) \tr\left(h\left|\nind^{\bG(\Q_p)}_{\bH(\Q_p)}(\pi_{\bH}\otimes \chi_c)\right.\right)$$
  $$=\tr(\tau|(r_{-\mu}\circ \eta\lambda_{\pi_{\bH}\otimes \chi_c}|_{W_{E}})) \tr(\eta^*(h)|\pi_{\bH}\otimes \chi_c).$$
  We recall that $c(\tau)=t_0^{-\ord(\tau)}=t_0^{-r}=s^{-1}$, from which
  it is not difficult to deduce that the twist by $\chi_c$ has the same effect as
  replacing $\tau$ by $s^{-1}\tau$:
  $$\tr(\tau|(r_{-\mu}\circ \eta\lambda_{\pi_{\bH}\otimes \chi_c}|_{W_E}))
  = \tr(s^{-1}\tau|(r_{-\mu}\circ \eta\lambda_{\pi_{\bH}}|_{W_E})).$$
  On the other hand, note that
  $$ \tr(\eta^*(h)|\pi_{\bH}\otimes \chi_c)= \tr(\eta^*(h)|\pi_{\bH}).$$
  Indeed, the constant term $\eta^*(h)$ is supported only on the conjugacy classes of $\bH(\Q_p)$ meeting
  $\bH(\Z_p)$, since $h$ is supported on $\bG(\Z_p)$.
  Since the character $\chi_c$ is trivial on such conjugacy classes (because $c$ is an unramified
  cocycle), the identity follows.

  Putting the above identities together, we conclude that
  $$\tr(f^{\bH}_{\tau,h}|\pi_{\bH})=\tr(s^{-1}\tau|(r_{-\mu}\circ \eta\lambda_{\pi_{\bH}}|_{W_E}))\tr(\eta^*(h)|\pi_{\bH}),$$
  which is the equality \eqref{e:char-id-for-f-tau-h} to be proved.

  It remains to justify the unproven claim that $f^{\bH}_{\tau,h}$ is an $\eta'$-transfer of $f_{\tau,h}^\bG$.
  For any $(\bG,\bH)$-regular semisimple $\gamma_{\bH}\in \bH(\Q_p)$, if $\gamma_{\bH}$ is not a transfer of
  any twisted conjugacy class of $\delta\in \bG(\Q_{p^r})$ then
  $SO_{\gamma_{\bH}}(f^{\bH}_{\tau,h})=SO_{\gamma_{\bH}}((\eta')^* f_{\tau,h}^\bG)=0$.
  From now on suppose that $\gamma_{\bH}$ is a transfer of some $\delta$. We note that in the cases that occur here,
  taking a stable (twisted) orbital integral amounts to multiplying the usual (twisted) orbital integral by
  a sign, given as the Kottwitz sign attached to the (twisted) centralizer of the element considered.
  By \eqref{e:SO-p}, \eqref{e:twisted-trans-factor},
$$
SO^{\bH(\Q_p)}_{\gamma_{\bH}}(f^{\bH}_{\tau,h})= \Delta^{\eta}_p(\gamma_{\bH},\gamma)
\lg \alpha(\gamma;\delta),s\rg^{-1}
  STO_{\delta\sigma}(\phi_{\tau,h}).$$
  The superscript $\eta$ indicates that the transfer factor is relative to $\eta$.
  On the other hand, Theorem \ref{t:Fund-Lemma} and its analogue for the base change relative to $\xi$ tell us that
$$
SO^{\bH(\Q_p)}_{\gamma_{\bH}}((\eta')^* f_{\tau,h}^\bG)= \Delta^{\eta'}_p(\gamma_{\bH},\gamma) SO^{\bG(\Q_p)}_{\gamma}(f_{\tau,h}^\bG)
= \Delta^{\eta'}_p(\gamma_{\bH},\gamma) STO_{\delta\sigma}(\phi_{\tau,h}).$$

  As we have noted above, $\Delta^{\eta'}_p(\gamma_{\bH},\gamma)=
  \chi^{-1}_c(\gamma_{\bH})\Delta^{\eta}_p(\gamma_{\bH},\gamma)$. Hence the proof boils down to showing that
  \begin{equation}\label{e:claim-conjGLn}\chi_c(\gamma_{\bH})=\lg \alpha(\gamma;\delta),s\rg.\end{equation}

  Define $R_r^{\bH}=\Res_{\Q_{p^r}/\Q_p} \bH$. We can find a $\delta_{\bH}\in R_r^{\bH}(\Q_p)=\bH(\Q_{p^r})$
  which is $\sigma$-conjugate to $\delta$ and transfers to $\gamma_{\bH}$ via base change for $\bH$.
  We write $\delta_{\bH}=(\delta_{\bH,i})_i$ and $\gamma_{\bH}=(\gamma_{\bH,i})_i$ with respect to $\bH\simeq \Res_{K/\Q_p}(\prod_i GL_{n_i})$.
  As usual $v_p$ denotes the additive valuation on a $p$-adic field such that $v_p(p)=1$.
  It can be seen from the definition of $\chi_c$ that
  $$\chi_c(\gamma_{\bH})=\prod_i t_{0,i}^{v_p(N_{K/\Q_p}\det \gamma_{\bH,i})}.$$
  As $\gamma_{\bH,i}$ is a norm of $\delta_{\bH,i}$, we get
  $$\chi_c(\gamma_{\bH})=\prod_i s_i^{v_p(N_{K/\Q_p}\det \delta_{\bH,i})}.$$

  In order to evaluate the right hand side of \eqref{e:claim-conjGLn},
  we begin by recalling the definition of $\alpha(\gamma;\delta)$.
  We are free to replace $\delta$ by a $\sigma$-conjugate, and hence we assume that $\delta=\delta_{\bH}$.
  Let $L$ be the fraction field of $W(\bar{\mathbb{F}}_p)$. Using Steinberg's theorem, one finds $d\in \bG(L)$
  such that $N\delta = d\gamma_{\bH} d^{-1}$. In fact, it is possible to take $d\in \bH(L)$, by choice of $\delta$, and
  as $\gamma_{\bH}$ is a base-change transfer of $\delta_{\bH}$.
  In general, $d^{-1}\delta d^\sigma\in I_0(L)$, where $I_0$ is the centralizer of $\gamma$ in $\bG$, and $\alpha(\gamma;\delta)$ is
  defined as the image of $d^{-1}\delta d^\sigma$ under Kottwitz' map $\kappa_{I_0}: B(I_0)\ra X^\ast(Z(\hat{I_0})^\Gamma)$.

  The Kottwitz maps
  $\kappa_{I_0}$ and $\kappa_{\bH}$ fit into a commutative diagram
  $$\xymatrix{ B(I_0) \ar[d]_-{\kappa_{I_0}} \ar[r] & B(\bH) \ar[d]_-{\kappa_{\bH}}
  \ar@{=}[r] & \prod_i B(\Res_{K/\Q_p} \GL_{n_i})\ar[d]_-{(\kappa_{\Res_{K/\Q_p} \GL_{n_i}})_i}   \\
  X^*(Z(\hat{I_0})^\Gamma) \ar[r]  & X^*(Z(\hat{\bH})^\Gamma)
  \ar@{=}[r] & \prod_i X^*(\C^\times)
  }$$
  The image of $\alpha(\gamma;\delta)$ in $X^\ast(Z(\hat{\bH})^\Gamma)$ is the image under $\kappa_{\bH}$
  of the $\sigma$-conjugacy class of $d^{-1} \delta_{\bH} d^\sigma$,
  i.e. the $\sigma$-conjugacy class of $\delta_{\bH}$, as $d\in \bH(L)$.

  Then $$ \lg \alpha(\gamma;\delta),s\rg = \lg \kappa_{\bH}(\delta_{\bH}),s\rg
  = \prod_i \lg \kappa_{\Res_{K/\Q_p} \GL_{n_i}}(\delta_{\bH,i}),s_i\rg.$$
  The right hand side is equal to $\prod_i s_i^{v_p(N_{K/\Q_p}\det \delta_{\bH,i})}$
  thanks to Lemma \ref{l:Kottwitz-map} below.
  We have finished the proof of \eqref{e:claim-conjGLn}.
\end{proof}

  The following well-known fact was used in the proof.
\begin{lem}\label{l:Kottwitz-map}
  Let $G=\Res_{K/\Q_p} \GL_n$ for a finite extension $K$ of $\Q_p$. Then the Kottwitz map
$$B(G)\ra X^*(Z(\hat{G})^{\Gamma(p)})=X^*(\C^\times)
\stackrel{\mathrm{canon.}}{\simeq} \Z$$
is induced by the map $g\mapsto v_p(N_{K/\Q_p}\det g)$ from $G(L)=\GL_n(K\otimes_{\Q_p} L)$ to $ \Z$.
\end{lem}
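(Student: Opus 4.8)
The plan is to reduce, by functoriality of the Kottwitz map, to the single base case of the split one‑dimensional torus over $\Q_p$. Consider the $\Q_p$-morphism
\[
\nu\colon G=\Res_{K/\Q_p}\GL_n \xrightarrow{\ \det\ } \Res_{K/\Q_p}\G_m \xrightarrow{\ N_{K/\Q_p}\ } \G_m ,
\]
which on $L$-points is the map $\GL_n(K\otimes_{\Q_p}L)\to L^\times$, $g\mapsto N_{K\otimes_{\Q_p}L/L}(\det g)$. Let $\hat\nu\colon \hat\G_m\to\hat G$ be the dual morphism. The first thing I would check is that $\hat\nu$ is a $\Gamma(p)$-equivariant isomorphism of $\hat\G_m$ onto $Z(\hat G)^{\Gamma(p)}$, carrying the standard generator of $X^*(\hat\G_m)=\Z$ to the generator of $X^*(Z(\hat G)^{\Gamma(p)})$ used in the statement. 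Indeed $\hat\nu$ is the composite of the dual of $N_{K/\Q_p}$, the diagonal embedding $\hat\G_m\hookrightarrow \widehat{\Res_{K/\Q_p}\G_m}=(\C^\times)^{\Hom_{\Q_p}(K,\ol\Q_p)}$, with the dual of $\det$, the factorwise scalar embedding $\widehat{\Res_{K/\Q_p}\G_m}\hookrightarrow \hat G=\GL_n(\C)^{\Hom_{\Q_p}(K,\ol\Q_p)}$; both are isomorphisms onto the centers of their targets, and since $\Gamma(p)$ permutes $\Hom_{\Q_p}(K,\ol\Q_p)$ transitively, $Z(\hat G)^{\Gamma(p)}$ is exactly the diagonal $\C^\times$, which is the image of $\hat\nu$. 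Chasing the evident coordinate character through these maps pins down the identification on character groups.

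Granting this, functoriality of $\kappa$ (see \cite{KottwitzStabilization}, Section 6; equivalently the functoriality of $\pi_1$ and $\kappa_G\colon B(G)\to\pi_1(G)_{\Gamma(p)}=X^*(Z(\hat G)^{\Gamma(p)})$) produces a commutative square whose horizontal arrows are $\nu_*\colon B(G)\to B(\G_m)$ and $\hat\nu^*\colon X^*(Z(\hat G)^{\Gamma(p)})\xrightarrow{\ \sim\ }X^*(\hat\G_m)=\Z$, and by the previous paragraph $\hat\nu^*$ is the identity of $\Z$ (concretely, $\pi_1(\nu)$ is the augmentation $\Z[\Hom_{\Q_p}(K,\ol\Q_p)]\to\Z$, which induces the identity on $\Gamma(p)$-coinvariants). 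Hence $\kappa_G([g])=\kappa_{\G_m}(\nu_*[g])=\kappa_{\G_m}\big([N_{K/\Q_p}\det g]\big)$. It then remains to invoke the base case: $B(\G_m)=L^\times/\cO_L^\times$ with the class of $b$ recorded by $v_p(b)\in\Z$, and by construction $\kappa_{\G_m}\colon B(\G_m)\xrightarrow{\ \sim\ }X^*(\hat\G_m)=\Z$ is the map induced by $v_p$. Combining gives $\kappa_G([g])=v_p\big(N_{K/\Q_p}(\det g)\big)$ for all $g\in G(L)=\GL_n(K\otimes_{\Q_p}L)$, as claimed.

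The computations are all routine; the only points that require a little care are the normalization bookkeeping — verifying that $\hat\nu^*$ is $+\mathrm{id}$ rather than $-\mathrm{id}$ on $\Z$, and that the norm morphism $N_{K/\Q_p}$ on $L$-points is the $L$-linear norm of the $L$-algebra $K\otimes_{\Q_p}L$ — both of which are immediate once the dual maps are written down explicitly. This is, of course, why the statement is well known and quoted as a fact.
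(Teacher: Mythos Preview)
Your proof is correct and follows essentially the same approach as the paper: use functoriality of the Kottwitz map to reduce to a torus, where the assertion is elementary. The only minor difference is that the paper reduces via $\det$ to $\Res_{K/\Q_p}\GL_1$ and then cites \cite[2.5]{Kot85} for that case, whereas you compose further with $N_{K/\Q_p}$ to reduce all the way to $\G_m$ over $\Q_p$ and handle the base case directly; this makes your argument slightly more self-contained but is otherwise the same idea.
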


\begin{proof}
  The functoriality of the Kottwitz map (\cite[4.9]{Kot97})
  with respect to $$\det:\Res_{K/\Q_p}\GL_n\ra \Res_{K/\Q_p}\GL_1$$
  reduces the proof to the case of $n=1$, where the assertion is standard, cf. \cite[2.5]{Kot85}.
\end{proof}

\section{Cohomology of Shimura varieties: Nonendoscopic case}\label{s:lgc}

Consider a compact unitary group Shimura variety with trivial endoscopy as in Section \ref{s:pseudostab}.

\begin{thm}\label{t:nonendoscopic} Assume that all places of $F_0$ above $p$ are split in $F$. Then we have an identity
\[
H^\ast_\xi = \sum_{\pi_f} a(\pi_f) \pi_f\otimes (r_{-\mu}\circ \varphi_{\pi_p}|_{W_{E_\fkp}})|\cdot|^{-\dim \Sh/2}
\]
as virtual $\bG(\A_f^p)\times \bG(\Z_p)\times W_{E_\fkp}$-representations. Here $\pi_f$ runs through irreducible admissible representations of $\bG(\A_f)$, the integer $a(\pi_f)$ is as in \cite{KottwitzLambdaAdic}, p. 657, and $\varphi_{\pi_p}$ is the local Langlands parameter associated to $\pi_p$.
\end{thm}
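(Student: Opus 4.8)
The plan is to combine the Lefschetz-type trace identity of Theorem~\ref{MainTheorem3} (pseudostabilization) with the local character identity of Theorem~\ref{t:conj-GLn}. The hypothesis that every place of $F_0$ above $p$ splits in $F$ says exactly that, up to a central torus factor, $\bG_{\Q_p}$ is a product of Weil restrictions of general linear groups over $p$-adic fields; thus the local datum at $p$ is of (quasi-)EL type, $\varphi_{\pi_p}$ is the parameter supplied by \cite{HT01}, \cite{Hen00}, and \eqref{e:conj-GLn} is available. Using $\langle\rho,\mu\rangle=\dim\Sh/2$ and the identification of the local reflex field with $E_\fkp$, Theorem~\ref{t:conj-GLn} reads: for every irreducible $\pi_p$ of $\bG(\Q_p)$ and all $\tau$, $h$ as in Theorem~\ref{MainTheorem1},
\[
\tr(f^{\bG}_{\tau,h}|\pi_p)=\tr(\tau|(r_{-\mu}\circ\varphi_{\pi_p}|_{W_{E_\fkp}})|\cdot|^{-\dim\Sh/2})\,\tr(h|\pi_p).
\]

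Write $H^{\ast}_\xi=\sum_{\pi_f}\pi_f\otimes\sigma(\pi_f)$ as a virtual $\bG(\A_f)\times\Gal(\ol{\Q}/E)$-representation, with $\sigma(\pi_f)=H^{\ast}_\xi(\pi_f)$ as in \eqref{e:H(pi_f)}. Fix $\tau\in W_{E_\fkp}$ over $\mathrm{Frob}^{j}$ ($j\ge1$), $h\in C_c^\infty(\bG(\Z_p))$, $f^p\in C_c^\infty(\bG(\A_f^p))$, and expand both sides of $N\,\tr(\tau\times hf^p|H^{\ast}_\xi)=\tr(f^{\bG}_{\tau,h}f^p|H^{\ast}_\xi)$ over this decomposition. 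The left side becomes $N\sum_{\pi_f}\tr(h|\pi_p)\tr(f^p|\pi_f^p)\tr(\tau|\sigma(\pi_f))$; the right side, after inserting the displayed identity, becomes $\sum_{\pi_f}\tr(h|\pi_p)\tr(f^p|\pi_f^p)\,\dim\sigma(\pi_f)\,\tr(\tau|(r_{-\mu}\circ\varphi_{\pi_p}|_{W_{E_\fkp}})|\cdot|^{-\dim\Sh/2})$. One has $\dim\sigma(\pi_f)=\dim H^{\ast}_\xi(\pi_f)=N\,a(\pi_f)$ with $a(\pi_f)$ Kottwitz's integer --- this is the Matsushima-formula step in the proof of Theorem~\ref{MainTheorem3} together with the definition on p.~657 of \cite{KottwitzLambdaAdic}, and one knows $N=\dim r_{-\mu}$ (cf.\ the proof of Theorem~\ref{t:conj-GLn}). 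Dividing by $N$ yields, for all such $\tau$, $h$, $f^p$,
\[
\sum_{\pi_f}\tr(h|\pi_p)\tr(f^p|\pi_f^p)\Big(\tr(\tau|\sigma(\pi_f))-a(\pi_f)\,\tr(\tau|(r_{-\mu}\circ\varphi_{\pi_p}|_{W_{E_\fkp}})|\cdot|^{-\dim\Sh/2})\Big)=0.
\]

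Now disassemble this identity by linear independence of characters. By the trace Paley--Wiener theorem \cite{BDK86}, the functionals $f^p\mapsto\tr(f^p|\pi_f^p)$ separate the (finitely many, at each level) components $\pi_f^p$ that occur, while the functionals $h\mapsto\tr(h|\pi_p)$ separate the isomorphism classes of the restrictions $\pi_p|_{\bG(\Z_p)}$. Fixing $\pi_f^p$ and an isomorphism class $\varrho$ of smooth $\bG(\Z_p)$-representations, and then letting $\tau$ run over $\bigcup_{j\ge1}\mathrm{Frob}^{j}I_{E_\fkp}$ (which determines a virtual $W_{E_\fkp}$-representation), we get
\[
\sum_{\pi_p:\,\pi_p|_{\bG(\Z_p)}\cong\varrho}\sigma(\pi_f^p\otimes\pi_p)=\sum_{\pi_p:\,\pi_p|_{\bG(\Z_p)}\cong\varrho}a(\pi_f^p\otimes\pi_p)\,(r_{-\mu}\circ\varphi_{\pi_p}|_{W_{E_\fkp}})|\cdot|^{-\dim\Sh/2}
\]
as virtual $W_{E_\fkp}$-representations, both sums finite. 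Tensoring with $\pi_f^p\otimes\varrho$ and summing over all $\varrho$ and $\pi_f^p$ reassembles the left side into $H^{\ast}_\xi$ and the right side into $\sum_{\pi_f}a(\pi_f)\,\pi_f\otimes(r_{-\mu}\circ\varphi_{\pi_p}|_{W_{E_\fkp}})|\cdot|^{-\dim\Sh/2}$ as $\bG(\A_f^p)\times\bG(\Z_p)\times W_{E_\fkp}$-representations; this is the claim.

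The substantial work --- the trace formula of \cite{ScholzeDefSpaces} underlying Theorems~\ref{MainTheorem1} and \ref{MainTheorem3}, and the local character identity of Theorem~\ref{t:conj-GLn} --- is already done, so the argument above is bookkeeping plus linear independence. The only delicate point is that the test functions available at $p$ lie in $C_c^\infty(\bG(\Z_p))$, hence see $\pi_p$ only through $\pi_p|_{\bG(\Z_p)}$; two representations $\pi_p\not\cong\pi_p'$ with $\pi_p|_{\bG(\Z_p)}\cong\pi_p'|_{\bG(\Z_p)}$ therefore cannot be separated. This is harmless for the statement as formulated --- it is made only for the $\bG(\Z_p)$-action, so the grouping over such $\pi_p$ in the displayed identity is invisible to the assertion --- but it is also the reason that upgrading to a $\bG(\A_f)\times W_{E_\fkp}$-identity would need the expected rigidity at $p$, namely that $\pi_p$ is determined by $\pi_f^p$ inside $H^{\ast}_\xi$ (Corollary~\ref{cor:RigidAtp}, which relies on base change and strong multiplicity one for $\GL_n$). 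What remains is the routine matching of normalizations ($N=\dim r_{-\mu}$, $\dim\Sh=\langle2\rho,\mu\rangle$, $E\cong E_\fkp$, the precise shape of $a(\pi_f)$) with \cite{KottwitzLambdaAdic}.
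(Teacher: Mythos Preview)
Your proof is correct and follows the same approach as the paper: combine Theorem~\ref{MainTheorem3} with the character identity of Theorem~\ref{t:conj-GLn} and Matsushima's formula $\dim H^\ast_\xi(\pi_f)=N\,a(\pi_f)$. The paper's version is slightly more direct---it simply notes that the resulting trace identity for all $\tau$, $h$, $f^p$ already gives equality of virtual $\bG(\A_f^p)\times\bG(\Z_p)\times W_{E_\fkp}$-representations, so your explicit disassembly-and-reassembly step (separating by $\pi_f^p$ and $\pi_p|_{\bG(\Z_p)}$) is not needed.
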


\begin{proof} First, we note that the assumption implies that Conjecture \ref{c:char-id} is true in this case, by Theorem \ref{t:conj-GLn}. Matsushima's formula can be reformulated as the identity
\[
H^\ast_\xi = N \sum_{\pi_f} a(\pi_f) \pi_f
\]
as $\bG(\A_f)$-representation, cf. Lemma 4.2 of \cite{Kot92b}. Now Theorem \ref{MainTheorem3} and the identity from Conjecture \ref{c:char-id} show that
\[\begin{aligned}
\tr(\tau\times hf^p|H^\ast_\xi) &= N^{-1} \tr(f_{\tau,h}^\bG f^p | H^\ast_\xi) \\
&= \sum_{\pi_f} a(\pi_f) \tr(\tau|(r_{-\mu}\circ \varphi_{\pi_p}|_{W_{E_\fkp}})|\cdot |^{-\dim \Sh/2}) \tr(hf^p|\pi_f)\ .
\end{aligned}\]
This gives the desired identity.
\end{proof}

\begin{cor}\label{c:hasse-weil} In the situation of the theorem, let $K\subset \bG(\A_f)$ be any sufficiently small compact open subgroup. Then the semisimple local Hasse-Weil zeta function of $\Sh_K$ at the place $\fkp$ of $E$ is given by
\[
\zeta_\fkp^\semis (\Sh_K,s) = \prod_{\pi_f} L^\semis(s-\dim \Sh/2, \pi_p, r_{-\mu})^{a(\pi_f)\dim \pi_f^K}\ .
\]
\end{cor}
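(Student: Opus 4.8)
The plan is to deduce the corollary formally from Theorem~\ref{t:nonendoscopic} by unwinding the definition of the semisimple local zeta function. Recall that for a (virtual) continuous finite-dimensional representation $V$ of $W_{E_\fkp}$, where $E_\fkp$ has residue field of cardinality $q=N\fkp$, the semisimple local $L$-factor $L^\semis(s,V)$ is determined by additivity in $V$ (so $L^\semis(s,V\oplus V')=L^\semis(s,V)L^\semis(s,V')$, and it extends to the Grothendieck group of Frobenius-semisimple representations with bounded inertia), by $L^\semis(s,V)=\det\big(1-\mathrm{Frob}\cdot q^{-s}\mid V^{I_{E_\fkp}}\big)^{-1}$ when inertia acts through a finite quotient and $\mathrm{Frob}$ semisimply, and by the twist rule $L^\semis(s,V\otimes|\cdot|_{E_\fkp}^{a})=L^\semis(s+a,V)$; equivalently $L^\semis(s,V)$ depends only on the semisimple traces $\tr^\semis(\mathrm{Frob}^j\mid V)$, $j\ge1$, which are exactly the quantities computed by the Langlands--Kottwitz--Scholze method. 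By definition the semisimple local zeta function of $\Sh_K$ (with coefficients in $\cF_{\xi,K}$) is $\zeta_\fkp^\semis(\Sh_K,s)=\prod_i\det{}^\semis\big(1-\mathrm{Frob}\,q^{-s}\mid H^i(\Sh_K\otimes\ol{\Q},\cF_{\xi,K})\big)^{(-1)^{i+1}}$, and since $\det^\semis(1-\mathrm{Frob}\,q^{-s}\mid H^i)=L^\semis(s,H^i)^{-1}$ this is $\prod_i L^\semis(s,H^i)^{(-1)^{i}}$, hence by additivity equals $L^\semis\big(s,(H^\ast_\xi)^K\big)$, where $(H^\ast_\xi)^K=\sum_i(-1)^iH^i(\Sh_K\otimes\ol{\Q},\cF_{\xi,K})$ is regarded as a virtual $W_{E_\fkp}$-representation.

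First I would take $K$-invariants in the identity of Theorem~\ref{t:nonendoscopic}. For $K=K^pK_p$ with $K_p\subset\bG(\Z_p)$ --- the natural level of generality for a method that controls only the $\bG(\Z_p)$-action, and to which every neat $K$ may be shrunk --- the identity of virtual $\bG(\A_f^p)\times\bG(\Z_p)\times W_{E_\fkp}$-representations yields
\[
(H^\ast_\xi)^K=\sum_{\pi_f}a(\pi_f)\,\dim\big((\pi_f^p)^{K^p}\big)\,\dim\big(\pi_p^{K_p}\big)\,\big(r_{-\mu}\circ\varphi_{\pi_p}|_{W_{E_\fkp}}\big)|\cdot|^{-\dim\Sh/2},
\]
a finite sum of virtual $W_{E_\fkp}$-representations, with $\dim\big((\pi_f^p)^{K^p}\big)\dim\big(\pi_p^{K_p}\big)=\dim\pi_f^K$. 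Applying $L^\semis$ termwise, using additivity, the twist rule $L^\semis(s,V|\cdot|_{E_\fkp}^{-\dim\Sh/2})=L^\semis(s-\dim\Sh/2,V)$, and the definition $L^\semis(s,\pi_p,r_{-\mu}):=L^\semis\big(s,r_{-\mu}\circ\varphi_{\pi_p}|_{W_{E_\fkp}}\big)$ of the semisimple local $L$-function attached to $\pi_p$ and $r_{-\mu}$, one obtains
\[
\zeta_\fkp^\semis(\Sh_K,s)=L^\semis\big(s,(H^\ast_\xi)^K\big)=\prod_{\pi_f}L^\semis\big(s-\dim\Sh/2,\pi_p,r_{-\mu}\big)^{a(\pi_f)\dim\pi_f^K},
\]
which is the claim.

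There is essentially no hard step: the substance is Theorem~\ref{t:nonendoscopic}, and the rest is bookkeeping. The points that deserve a word are that the traces produced in Theorem~\ref{MainTheorem1} (hence in Theorem~\ref{t:nonendoscopic}) are genuinely semisimple traces of Frobenius powers, so that passing to $L^\semis$ of the virtual $W_{E_\fkp}$-representation $(H^\ast_\xi)^K$ is legitimate; that the product is finite because $\{\pi_f:a(\pi_f)\dim\pi_f^K\neq0\}$ is finite; and that the normalization of $|\cdot|_{E_\fkp}$ (arithmetic, with geometric Frobenius of absolute value $q^{-1}$) must be chosen so that the shift is by $-\dim\Sh/2$ as stated. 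The only point that is not pure formalism is the reduction to $K_p\subset\bG(\Z_p)$: for arbitrary neat $K$ one needs the identity of Theorem~\ref{t:nonendoscopic} to upgrade to virtual $\bG(\A_f)\times W_{E_\fkp}$-representations, which holds whenever $\pi_p$ is pinned down as a $\bG(\Q_p)$-representation by $\pi_f^p$ --- e.g.\ under the hypotheses of Theorem~\ref{t:cond-bc}, via the base change lift of \cite{Far04}.
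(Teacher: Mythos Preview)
Your proposal is correct and follows essentially the same route as the paper: the paper's proof consists of the single remark that one may assume $K\subset \bG(\A_f^p)\times \bG(\Z_p)$, after which the corollary follows directly from Theorem~\ref{t:nonendoscopic} and the definitions. Your write-up simply spells out this bookkeeping in detail; your closing caveat about the reduction to $K_p\subset\bG(\Z_p)$ is exactly the content of the paper's one-line assumption.
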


\begin{proof} One can assume that $K\subset \bG(\A_f^p)\times \bG(\Z_p)$. Now the corollary follows directly from the previous theorem and the definitions.
\end{proof}

\section{Cohomology of Shimura varieties: Endoscopic case}\label{s:compact}

  In this section we proceed to study compact unitary group Shimura varieties at split places when endoscopy is nontrivial.
  We will work in the setup of \cite[\S5.1]{Shin11} to be recalled now.
  Suppose that
\begin{itemize}
  \item $B=F$ is a CM field with complex conjugation as the involution $*$, and
  \item $F$ contains an imaginary quadratic field $\cK$.
\end{itemize}
  Fix a prime $p$ split in $\cK$ and let $\fkp$ be a prime of $E$ above $p$.
  We also fix a field isomorphism $\iota_\ell:\ol{\Q}_\ell\simeq \C$ throughout.
 Set $n=\dim_F V \ge 1$ and $F_0=F^{*=1}$.
 Let $\Spl_{F/F_0}$ denote the set of all rational primes $v$
 such that every prime of $F_0$ above $v$ splits in $F$.
  We further assume
\begin{enumerate}
  \item $F_0\neq \Q$,
  \item if a prime $v$ is ramified in $F$ then $v\in \Spl_{F/F_0}$,
  \item $\bG_{\Q_v}$ is quasi-split at all finite places $v$,
  \item the flat closure of the generic fibre in the integral models of \cite{ScholzeDefSpaces} is proper.
\end{enumerate}
Condition (4) is necessary for our machinery. A sufficient condition for (4) is that the pairing $(~,~)$ on $V$ is positive or negative definite with respect to some
complex embedding $F\hra \C$, as proved by Kai-Wen Lan, \cite{Lan}, Theorem 5.3.3.1, Remark 5.3.3.2,
cf. also \cite{ScholzeDefSpaces}, Theorem 5.8, for explanation.

Conditions (1) and (2) are imposed only to avoid issues with $L$-packets for unitary groups and will become unnecessary in due course.
We remark that this situation is somewhat more general than in \cite{Shin11}, as we allow general signatures at $\infty$.

Condition (3) contrasts the case considered in the previous section, and means that
endoscopy is seen in its full strength. It implies that $\bG(\A)$ admits an automorphic representation whose base change is $\Pi$,
for any $\Pi$ as below. (If $\bG_{\Q_v}$ is not quasi-split at a finite $v$ then there would be a local obstruction at $v$.)

Let $\Pi=\psi\otimes \Pi^1$ be an automorphic representation of $\bG(\A_\cK)\simeq \GL_1(\A_\cK)\times \GL_n(\A_F)$.
Let $\theta$ denote the automorphism of $\bG(\A_\cK)$ induced by the nontrivial
element $c$ of $\Gal(\cK/\Q)$.
Suppose that
\begin{itemize}
  \item $\Pi\simeq \Pi\circ \theta$ (such a $\Pi$ is called $\theta$-stable),
  \item the $L$-parameter for $\Pi_\infty$ is the base change of
$\varphi_\xi$,
  \item $\Pi$ is ramified only at the places of $F$ above  $\Spl_{F/F_0}$.
\end{itemize}
We will put ourselves in either
 (Case ST) or (Case END) of \cite[\S6.1]{Shin11}, to be briefly
 recalled here. The first case refers to the case when $\Pi$ is cuspidal.
In the second, $\Pi$ is required to be parabolically induced from
a $\theta$-stable cuspidal automorphic representation on a maximal proper
$\theta$-stable Levi subgroup which is isomorphic to a quadratic base change of
an elliptic endoscopic group of $\bG$. Such a Levi subgroup is isomorphic to
$\GL_1(\A_\cK)\times \GL_{m_1}(\A_F)\times \GL_{m_2}(\A_F)$
for some $m_1\ge m_2\ge 1$ such that $m_1+m_2=n$. It is isomorphic to a quadratic base change
of a (necessarily unique up to isomorphism) elliptic endoscopic group $(\bH,s,\eta)$ of $\bG$ in
all cases except when $m_1$, $m_2$ and $[F^+:\Q]$ are all odd. We may and do assume that
$s^2=1$. Let $\pi_p$ be the irreducible smooth representation of $\bG(\Q_p)$
whose base change is isomorphic to $\Pi_p$. Similarly, in (Case END), we have
a representation $\pi_{\bH,p}$ of $\bH(\Q_p)$; its $\eta$-transfer is $\pi_p$.

Fix a set of representatives for $\cE_{\el}(\bG)$ once and for all as in \cite[\S3.2]{Shin11},
by choosing a Hecke character $\varpi:\A^\times_\cK/\cK^\times \ra \C^\times$
such that $\varpi|_{\A^\times/\Q^\times}$ corresponds to the quadratic character
associated with $\cK/\Q$ via class field theory, using Lemma 7.1 of \cite{Shin11}.
Define $S$ to be the finite set of all primes which are either $p$ or a prime where
any of $F$, $\Pi$ or $\varpi$ is ramified; our assumptions imply that $S\subset \Spl_{F/F_0}$.
Define a virtual representation $H^*_\xi(\Pi)$ of $\Gal(\ol{E}/E)$ by
\begin{equation}\label{e:Pi-part}
H^*_\xi(\Pi)=\sum_{\pi_f} H^*_\xi(\pi_f)
\end{equation}
where the sum runs over $\pi_f$ such that $H^*_\xi(\pi_f)\neq 0$, $\pi_f^S$ is unramified, $BC(\iota_\ell \pi_f^S)\simeq
\Pi^S$ and $BC(\iota_\ell \pi_{S})\simeq \Pi_S$; this is a finite sum. Here $BC$ denotes
the local base change map, which makes sense as $\pi_f^S$ is unramified and
$S\subset \Spl_{F/F_0}$ (\cite[\S4.2]{Shin11}).

We have the following proposition, which is not explicitly stated in \cite{Shin11}.

\begin{prop} Let $\pi_f$ be such that $H^*_\xi(\pi_f)\neq 0$, $\pi_f^S$ is unramified and $BC(\iota_\ell\pi_f^S)\simeq \Pi^S$.
Then $BC(\iota_\ell \pi_S)\simeq \Pi_S$.
\end{prop}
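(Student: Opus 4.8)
The plan is to reduce the statement to a rigidity property of the cohomology that has essentially already been established in the non-endoscopic portion of the paper, namely Corollary \ref{cor:RigidAtp}, combined with the strong multiplicity one theorem for $\GL_n$ that underlies the base change formalism. The point is that $\pi_f$ occurs in $H^*_\xi$, and its base change is already pinned down away from $S$; the only freedom is at the finitely many primes in $S$, in particular at $p$. So the proof will be an argument by exhaustion over the possibilities for $\pi_S$ (equivalently $\pi_p$, since all other primes in $S$ lie in $\Spl_{F/F_0}$ and one uses base change there too).

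First I would invoke the existence of a global base change lift: since $\pi_f$ occurs in $H^*_\xi$ and (by condition (3), quasi-splitness at all finite places, together with the hypothesis that $F$ contains an imaginary quadratic field) one is in the setting where the results of Labesse, as used in \cite{Shin11}, apply, there is a $\theta$-stable automorphic representation $\Pi'$ of $\bG(\A_\cK)\simeq \GL_1(\A_\cK)\times \GL_n(\A_F)$ with $BC(\iota_\ell \pi_v) \simeq \Pi'_v$ at all places $v$ where this makes sense — in particular at all $v\in \Spl_{F/F_0}$, hence at all $v\notin S$ and at $v=p$. The infinitesimal character of $\Pi'_\infty$ is forced to match that of $\varphi_\xi$ because $\pi_\infty$ contributes to $H^*_\xi$ (relative Lie algebra cohomology against $\xi$ is nonzero), so $\Pi'_\infty$ has the same $L$-parameter as $\Pi_\infty$.

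Next, by hypothesis $BC(\iota_\ell \pi_f^S) \simeq \Pi^S$, so $\Pi'^S \simeq \Pi^S$: the two base change lifts agree at all places outside the finite set $S$. Now $\Pi$ is either cuspidal (Case ST) or an isobaric sum of two cuspidal representations induced from a maximal proper Levi (Case END); in either case $\Pi$ is isobaric, and likewise $\Pi'$ is isobaric (a base change of an automorphic representation of a unitary group is isobaric by the classification underlying Labesse's work / Moeglin-Waldspurger). Two isobaric automorphic representations of $\GL_n(\A_F)$ that agree at almost all places are equal, by the strong multiplicity one / rigidity theorem for isobaric representations (Jacquet-Shalika). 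Therefore $\Pi' \simeq \Pi$ globally, and in particular $\Pi'_S \simeq \Pi_S$. Since $BC(\iota_\ell \pi_S) \simeq \Pi'_S$, we conclude $BC(\iota_\ell\pi_S)\simeq \Pi_S$, as desired.

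\emph{The main obstacle} is ensuring that the global base change lift $\Pi'$ genuinely exists for \emph{every} $\pi_f$ occurring in $H^*_\xi$, with no supercuspidality hypothesis of the kind imposed in Theorem \ref{t:cond-bc} — here one must lean on the stable base change results of Labesse (and the descent/classification for quasi-split unitary groups) rather than on the simpler Harris-Labesse argument, which is why conditions (1)--(3) above (especially $F \supset \cK$ and quasi-splitness at all finite places) are in force; these are exactly what guarantees that $\bG(\A)$ has no local obstruction to base change. Once that input is granted, the remaining step — rigidity for isobaric automorphic representations of $\GL_n$ — is classical, and the argument is complete. One should also double-check that the normalization of $BC$ at the inert/ramified primes of $F$ inside $S$ is consistent with the global lift, but since $S\subset \Spl_{F/F_0}$ this is just the split local base change, which is literally restriction of scalars and poses no difficulty.
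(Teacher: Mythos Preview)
Your approach is essentially correct, but it is genuinely different from the paper's. You invoke the existence of a global base-change lift $\Pi'$ of $\pi$ as a black box (via Labesse's stable base change for unitary groups, as recalled in \cite{Shin11}), and then pin $\Pi'$ down by strong multiplicity one for isobaric representations of $\GL_n$. The paper instead argues directly with the trace formula, following the proof of \cite[Cor.~6.5(iv)]{Shin11} (itself modeled on \cite[Thm~6.1]{Shin11} but with the ordinary rather than the Igusa trace formula): one derives identities of the shape (6.29)--(6.30) of \cite{Shin11} and concludes with strong multiplicity one together with the injectivity of local base change at places in $S\subset\Spl_{F/F_0}$. Your route is cleaner if one is willing to import Labesse's results wholesale; the paper's route stays inside its trace-formula framework and also makes visible the subtlety in (Case END), namely that since $\Pi$ is not cuspidal one must use that normalized parabolic induction of irreducible unitary representations of $\GL$ remains irreducible, so that $\Pi_S$ itself and not merely its supercuspidal support is determined.

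Two small remarks on your write-up. First, your opening reference to Corollary~\ref{cor:RigidAtp} is a red herring: you never actually use it, and you could not, since it lives in the nonendoscopic setting and carries a supercuspidality hypothesis; you correctly switch to Labesse's base change in the body. Second, the assertion that the base-change lift $\Pi'$ is isobaric is true but deserves a precise pointer (it is part of the output of Labesse's construction, cf. \cite[\S4]{Shin11}); ``Moeglin--Waldspurger'' is not the right citation here.
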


\begin{proof}
  One can argue with the trace formula as in the proof of Corollary 6.5 (iv) of \cite{Shin11}, which is similar to the proof
  of Theorem 6.1 of \cite{Shin11}, but relies on the usual trace formula rather than the trace formula for Igusa varieties.
  One derives formulas just as (6.29) and (6.30) of \cite{Shin11}. Then the proposition follows from the
   strong multiplicity one theorem for general linear groups and the injectivity
   of local base change on $S$ (since $S\subset \Spl_{F/F_0}$; see \cite{Shin11}, \S4.2).
   (In (Case END), $\Pi$ is not cuspidal and it is implicitly used in the argument that
   the induced representation of an irreducible unitary representation is irreducible for general linear groups,
   cf. \cite{Shin11}, (6.2), to ensure that not only the supercuspidal support of $\Pi_S$ but $\Pi_S$ itself is uniquely determined.)
\end{proof}

Set
\begin{equation}\label{e:C} C_{\bG}=(-1)^{\dim \Sh}\tau(\bG).\end{equation}
The following generalizes \cite[Thm 6.4]{Shin11} to other compact unitary Shimura varieties.

\begin{thm}\label{t:cohom-Sh} In (Case ST)
 $$H^*_\xi(\Pi)|_{W_{E_{\fkp}}}= C_{\bG}\cdot
(r_{-\mu}\circ \varphi_{\pi_p}|_{W_{E_{\fkp}}})|\cdot|_{\fkp}^{-\dim\Sh/2}.$$
In (Case END) there exists $e_{\Pi}\in \{\pm 1\}$ (independent of $p$ and $\fkp$) such that
 $$H^*_\xi(\Pi)|_{W_{E_{\fkp}}}= C_{\bG}\cdot \frac{1}{2}\left(
(r_{-\mu}\circ \varphi_{\pi_p}|_{W_{E_{\fkp}}})
+e_{\Pi}(r_{-\mu}\circ \eta\varphi_{\pi_{\bH,p}}|_{W_{E_{\fkp}}})^s\right)|\cdot|_{\fkp}^{-\dim\Sh/2}.$$
  Here $(r_{-\mu}\circ \eta\varphi_{\pi_{\bH,p}}|_{W_{E_{\fkp}}})^s$ is the virtual representation
  $V^+_{-\mu}-V^-_{-\mu}$ of $W_{E_{\fkp}}$, where
  $V_{-\mu}=V^+_{-\mu} \oplus V^-_{-\mu}$ is the decomposition into $+1$ and $-1$ eigenspaces
  for the $s$-action on $r_{-\mu}\circ \eta\varphi_{\pi_{\bH,p}}$.
\end{thm}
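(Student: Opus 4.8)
The plan is to derive Theorem \ref{t:cohom-Sh} from the stabilized trace formula of Theorem \ref{MainTheorem2} together with the character identities established in Corollary \ref{c:conj-GLn}, following the strategy of \cite[\S6]{Shin11} but replacing all archimedean/$p$-adic test function bookkeeping with the functions $f^{\bH, p}$, $f^{\bH}_{\tau,h}$, $f^{\bH}_\xi$ from Section \ref{s:transfer}. Concretely, I would start from
$$\tr(\tau\times h f^p \mid H^*_\xi) = \sum_{(\bH,s,\eta)\in\cE_{\el}(\bG)} \iota(\bG,\bH)\, ST^{\bH}_{\el}(f^{\bH})\ ,$$
and, granting hypothesis (2) on page concerning $ST^{\bH}_{\el} = ST^{\bH}_{\disc}$, expand each $ST^{\bH}_{\disc}(f^{\bH})$ spectrally. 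After projecting to the $\Pi$-isotypic part (using the set $S$ of bad primes, strong multiplicity one for $\GL_n$, and injectivity of local base change on $\Spl_{F/F_0}$ as in the Proposition just proved), only the endoscopic triples $(\bH,s,\eta)$ through which $\Pi$ factors survive: in (Case ST) only $\bG$ itself, and in (Case END) also the unique elliptic endoscopic $(\bH,s,\eta)$ into which the relevant Levi base-changes.

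Next I would evaluate the surviving spectral terms at $p$. For each relevant $(\bH,s,\eta)$ and each member $\pi_{\bH,p}$ of the local packet $\Pi(\eta\psi_{\bH,p})$, Corollary \ref{c:conj-GLn} gives
$$\tr(f^{\bH}_{\tau,h}\mid \pi_{\bH,p}) = \tr\!\left(s^{-1}\tau \mid (r_{-\mu}\circ\eta\lambda_{\pi_{\bH,p}}|_{W_E})\,|\cdot|_E^{-\langle\rho,\mu\rangle}\right)\tr(h^{\bH}\mid \pi_{\bH,p})\ ,$$
which is exactly the "key equality" bulleted in Section \ref{s:char-id}, replacing Kottwitz's (9.7). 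The $s$-action on $r_{-\mu}\circ\eta\varphi_{\pi_{\bH,p}}$ then splits the $\bH$-contribution into $V^+_{-\mu}$ and $V^-_{-\mu}$ pieces; combining the $\bG$-term and the $\bH$-term with their coefficients $\iota(\bG,\bH)$ and the spectral transfer factors $\Delta_p(\psi_{\bH},\pi_p)$, and summing over the packet, the cross terms reorganize into $\tfrac12\big((r_{-\mu}\circ\varphi_{\pi_p}) \pm (r_{-\mu}\circ\eta\varphi_{\pi_{\bH,p}})^s\big)$. The sign $e_\Pi\in\{\pm1\}$ is the product of the global normalization constants: the value $\langle s_\psi,\cdot\rangle$ coming from the multiplicity formula \cite[(8.2)]{KottwitzStabilization}, the archimedean sign $(-1)^{q(\bG)}$ hidden in $f^{\bH}_\xi$, and $\iota(\bG,\bH)$; since $s^2=1$ and all these depend only on $\Pi$ (the global $A$-parameter) and not on $p$ or $\fkp$, one gets $e_\Pi$ independent of $p$. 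The overall constant $C_{\bG} = (-1)^{\dim\Sh}\tau(\bG)$ appears by exactly tracing the archimedean Euler–Poincaré sign and the Tamagawa-number normalization as in \cite{Shin11}.

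The main obstacle, as in \cite{Shin11}, is the bookkeeping of global signs and the $L$-packet/$A$-packet combinatorics needed to pin down $e_\Pi$ precisely and to check it is $p$-independent — this requires the accepted hypotheses on the $A$-packet classification and the product formula for spectral transfer factors \cite[(9.6)]{KottwitzStabilization}, and in (Case END) the subtle point that $\Pi_S$ itself (not just its supercuspidal support) is determined, which is why irreducibility of parabolic induction of unitary representations of $\GL_n$ is invoked. A secondary technical point is handling the case $m_1, m_2, [F_0:\Q]$ all odd where no elliptic endoscopic group corresponds to the Levi: there one argues as in \cite{Shin11} that the relevant $\Pi$-part simply does not arise, or is absorbed into (Case ST)-type behavior, so the stated dichotomy still holds. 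Once these are in place, the identity of virtual $W_{E_\fkp}$-representations follows formally by comparing the $\Pi$-isotypic parts on both sides, exactly as the final bulleted list in Section \ref{s:char-id} produces the $\bG(\A_f^p)\times\bG(\Z_p)\times W_{E_\fkp}$-description of $H^*_\xi$.
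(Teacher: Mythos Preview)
Your outline follows the conditional template sketched at the end of Section \ref{s:char-id}: expand each $ST^{\bH}_{\el}(f^{\bH})$ spectrally by granting the hypothesis $ST^{\bH}_{\el}=ST^{\bH}_{\disc}$ and the global $A$-packet classification with its multiplicity formula and product formula for spectral transfer factors. But those hypotheses are explicitly ``accepted on faith'' only for the motivational consistency check in Section \ref{s:char-id}; Theorem \ref{t:cohom-Sh} is meant to be unconditional, and the paper does \emph{not} invoke them in its proof. This is a real gap: your argument, as written, would make the theorem contingent on conjectures that are not available for unitary similitude groups.

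The paper's actual route is different. It starts from Theorem \ref{MainTheorem2} as you do, but then transports everything to the twisted trace formula for $\bG\otimes_{\Q}\cK\simeq \GL_1\times \Res_{F/\Q}\GL_n$ via base change, exactly as in \cite[Thm 6.1]{Shin11}. One introduces functions $\tilde f^{\bH}$ on $\bG(\A_\cK)$ whose base-change transfers are the $f^{\bH}$, and the spectral comparison is carried out on the $\GL$-side, where the needed stabilization and packet structure are known unconditionally. The only new input at $p$ is the replacement of (6.14) and (6.19) of \cite{Shin11} by identities of the form
\[
\tr(\Pi_p(\tilde f^{\bG^*}_p)A^0_{\Pi_p})=\tr \pi_p(f^{\bG^*}_{\tau,h}),\qquad
\tr(\Pi_{\bH,p}(\tilde f^{\bH}_p)A^0_{\Pi_{\bH,p}})=\tr \pi_{\bH,p}(f^{\bH}_{\tau,h}),
\]
which are then evaluated via Theorem \ref{t:conj-GLn} and Corollary \ref{c:conj-GLn}. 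The sign $e_\Pi$ is simply declared to be $e_1^{-1}e_2$ in the notation of \cite{Shin11}; its independence of $p$ is inherited from that reference, not rederived from an abstract multiplicity formula. Finally, the paper only proves the statement up to an overall sign at this stage and fixes it afterwards in Corollary \ref{cor:sign-correct} using the Weil conjectures (concentration in middle degree), rather than by tracking archimedean Euler--Poincar\'e signs as you propose.
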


\begin{rem}
  Even though $\eta\varphi_{\pi_{\bH,p}}$ is equivalent to $\varphi_{\pi_p}$, we keep the expression
  $\eta\varphi_{\pi_{\bH,p}}$ to remember how $s$ acts on the underlying vector space. Note that
  the assertion in (Case END) can be restated as
\begin{equation}\label{e:alternative}
 H^*_\xi(\Pi)|_{W_{E_{\fkp}}}= \left\{\begin{array}{cc}
C_{\bG} V^+_{-\mu}|\cdot|_{\fkp}^{-\dim\mathrm{Sh}/2}, &\mbox{if}~e_\Pi=+1,\\
 C_{\bG} V^-_{-\mu}|\cdot|_{\fkp}^{-\dim\mathrm{Sh}/2}, &\mbox{if}~e_\Pi=-1.
  \end{array}
  \right.
\end{equation}
\end{rem}

\begin{proof}
We will first only prove this theorem up to sign. The sign will be determined later in Corollary \ref{cor:sign-correct}.
  We adopt the strategy of proof  as well as notation and convention
  (e.g. the choice of Haar measures, transfer factors and intertwining operators)
  from \cite[Thm 6.1]{Shin11}. Once we begin with Theorem \ref{MainTheorem2}
  (playing the role of Proposition 5.5 in that paper),
  basically the same argument works if suitable changes are made at $p$.
  As for the choice of test functions, explained at the start of proof
  of \emph{loc. cit.}, it suffices to remark that
  $f^{\bH}_{\tau,h}$ (replacing $\phi^{\vec{n}}_{\mathrm{Ig},p}$)
  is in the image of the base change transfer for each $\bH$
  since $p$ splits in $\cK$ and that
  $f^{\bH}_{\xi}$ is the same function as $\phi^{\vec{n}}_{\mathrm{Ig},\infty}$ by construction
  (when $\bH=G_{\vec{n}}$).
  Let us write $\tilde{f}^{\bH}$ (with sub or superscript) for the function on $\bG\otimes_\Q \cK$
  whose base change transfer is $f^{\bH}$. Hence our notation $f$ and $\tilde{f}$
  correspond to $\phi$ and $f$ in \cite{Shin11}, respectively.

  Let $\bG^*$ be a quasi-split inner form of $\bG$, which is a principal endoscopic group.
  In (Case ST), (6.14) of \cite{Shin11} is supplanted by
  \[\begin{aligned}
  \tr(\Pi_p(\tilde{f}^{\bG^*}_p)A^0_{\Pi_p})&= \tr \iota_l \pi_p (f^{\bG^*}_{\tau,h}) \\
  &=  \tr\left(\tau\left|(r_{-\mu}\circ \varphi_{\pi_p}|_{W_{E_{\fkp}}})|\cdot|_{\fkp}^{-\dim\Sh/2}\right.\right)\tr(h|\pi_p)\ ,
  \end{aligned}\]
  where the last equality is justified by Theorem \ref{t:conj-GLn}.
  In (Case END), the following is put in place of (6.19) of \cite{Shin11}.\footnote{There are typos in
  (6.19) and (6.20) of \cite{Shin11}: $\Pi_p$ and $\Pi_\infty$ should read $\Pi_{\bH,p}$ and
  $\Pi_{\bH,\infty}$, resp.}
  \[\begin{aligned}
  \tr(\Pi_{\bH,p}(\tilde{f}^{\bH}_p)A^0_{\Pi_{\bH,p}})&=\tr \iota_l \pi_{\bH,p} (f^{\bH}_{\tau,h})\\
  &= \tr\left(s\tau\left|(r_{-\mu}\circ\eta\varphi_{\pi_{\bH,p}}|_{W_{E_{\fkp}}})|\cdot|_{\fkp}^{-\dim\Sh/2}\right.\right)\tr(h|\pi_p)
  \end{aligned}\]
  This time the last equality comes from Corollary \ref{c:conj-GLn}. The right hand side is equal to
  $\tr (\tau|(r_{-\mu}\circ \eta\varphi_{\pi_{\bH,p}}|_{W_{E_{\fkp}}})^s |\cdot|_{\fkp}^{-\dim\Sh/2})$.
  Set $e_\Pi=e_1^{-1}e_2$.
  With these changes, the argument of \cite{Shin11} proves Theorem \ref{t:cohom-Sh} up to sign.
\end{proof}

The theorem implies the Ramanujan-Petersson conjecture for regular algebraic conjugate selfdual cuspidal automorphic representations of $\GL_n$. This was proved previously by Clozel and Caraiani, \cite{ClozelPurity}, \cite{Caraiani}. We note that Caraiani's result is stronger in that it works for all finite places.

\begin{cor} Let $M$ be a CM field, and let $\Pi$ be a cuspidal automorphic representation of $\GL_n(\A_M)$ such that $\Pi^\vee\cong \Pi\circ c$ and $\Pi_\infty$ is regular algebraic, i.e. has the same infinitesimal character as some algebraic representation of $R_{M/\Q} \GL_n$. Then $\Pi_v$ is tempered for all finite places $v$ of $M$ above rational primes $p$ above which $M$ and $\Pi$ are unramified.
\end{cor}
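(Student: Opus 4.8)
The plan is to deduce this from Theorem~\ref{t:cohom-Sh} in (Case ST), after a solvable base change, by combining it with the purity of the cohomology of the (smooth and proper) Shimura variety, in the style of Clozel.

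First I would replace $M$ by a CM field $F$ obtained from $M$ by a solvable base change (\cite{ArthurClozel}), chosen so that $F$ contains an imaginary quadratic field $\cK$ in which $p$ splits, so that $F_0 := F^{c=1}\neq\Q$, so that $F$ is unramified at $p$ and satisfies conditions (2)--(4) of Section~\ref{s:compact}, and so that $\Pi_F := BC_{F/M}(\Pi)$ is again cuspidal, conjugate selfdual, regular algebraic, and ramified only at places above $\Spl_{F/F_0}$. Choosing $F/M$ of prime degree $>n$ and unramified at $p$ ensures that $\Pi_F$ remains cuspidal, so that we are in (Case ST). Since $M$ and $\Pi$ are unramified at $p$ and $F/M$ is unramified at $p$, the Satake parameters of $\Pi_{M,v}$ at a place $v\mid p$ and those of $\Pi_F$ at the places of $F$ above $v$ differ only by raising to a fixed power, so $\Pi_{M,v}$ is tempered once $\Pi_F$ is tempered at all places above $p$; it therefore suffices to treat $\Pi_F$.

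Next I would attach to $(F,\cK,p)$ a compact unitary group Shimura variety as in Section~\ref{s:compact}: choose the $\ast$-Hermitian space so that $\bG$ is quasi-split at all finite places and, using $[F_0:\Q]\geq 2$, so that $\bG(\R)$ is compact at one archimedean place of $F_0$ and of some nondegenerate signature at the others. Recall that, up to an unramified twist, $r_{-\mu}$ restricted to the relevant product of copies of $\GL_n$ inside $\hat{\bG}$ is an exterior tensor product of standard representations indexed by the archimedean places. Twisting $\Pi_F$ by a Hecke character of $\A_\cK^\times$ to make it $\theta$-stable and descending it, we obtain (Case ST), and Theorem~\ref{t:cohom-Sh} yields
\[
H^*_\xi(\Pi)\big|_{W_{E_\fkp}} = C_{\bG}\cdot\big(r_{-\mu}\circ\varphi_{\pi_p}|_{W_{E_\fkp}}\big)\,|\cdot|_\fkp^{-\dim\Sh/2},
\]
where $\pi_p$ is the representation of $\bG(\Q_p)$ whose base change is $\Pi_{F,p}$.

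Finally I would invoke purity. As $\Pi_F$ is cuspidal with $\Pi_{F,\infty}$ cohomological, the archimedean components of the representations contributing to $H^*_\xi(\Pi)$ lie in discrete series $L$-packets, so the relevant $(\mathfrak{g},K)$-cohomology is concentrated in the middle degree $\dim\Sh$; hence $H^*_\xi(\Pi)$ is, up to the constant $C_{\bG}$, an honest Galois representation occurring as a subquotient of $H^{\dim\Sh}(\Sh_K\otimes\bar{\Q},\cF_{\xi,K})$ for a suitable $K$. Since $\Sh_K$ is smooth and proper over $E$, the Weil conjectures imply that this cohomology group is pure, so all eigenvalues of a geometric Frobenius at $\fkp$ on $r_{-\mu}\circ\varphi_{\pi_p}|_{W_{E_\fkp}}$ have the same complex absolute value. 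By the shape of $r_{-\mu}$ recalled above --- letting the compact archimedean place of $F_0$ vary, and comparing products of Frobenius eigenvalues that differ by swapping one eigenvalue of a single $\Pi_{F,v}$ --- this forces, for every place $v\mid p$ of $F$, the Satake parameters $\alpha_1,\dots,\alpha_n$ of $\Pi_{F,v}$ to have equal absolute values. Since $\Pi_{F,v}$ is a unitary unramified representation of $\GL_n(F_v)$, its multiset of Satake parameters is stable under $\alpha\mapsto\bar\alpha^{-1}$, which then forces $|\alpha_i|=1$ for all $i$, i.e.\ $\Pi_{F,v}$ is tempered. This holds for all $v\mid p$, and hence $\Pi_v$ is tempered for all $v\mid p$ of $M$. (Alternatively, once one has the Galois representations of Corollary~\ref{cor:ExGalRepr} together with this purity, the conclusion is exactly \cite{ClozelPurity}.) I expect the main difficulty to be not the purity argument but the bookkeeping in the first two steps: producing a base change field $F$ meeting all the hypotheses of Section~\ref{s:compact} while keeping $\Pi_F$ cuspidal, and matching the archimedean signature data --- hence the reflex field $E$ and the prime $\fkp$ --- with the $p$-adic places of $F$ so that $r_{-\mu}$ genuinely sees the standard representation there.
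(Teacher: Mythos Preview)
Your overall strategy---pass to a larger CM field by solvable base change, realize $\Pi$ in the cohomology of a compact unitary Shimura variety via Theorem~\ref{t:cohom-Sh} in (Case~ST), and then invoke Deligne's purity---is the same as the paper's. The gap is in the purity step. You assert that the archimedean components $\pi_\infty$ of the automorphic representations contributing to $H^\ast_\xi(\Pi)$ lie in the discrete series $L$-packet, so that the relevant $(\mathfrak g,K)$-cohomology is concentrated in the middle degree. But nothing you have proved guarantees this: you only know $BC(\pi_f^S)\simeq\Pi^S$ at finite places, and the corollary does not assume that $\xi$ has regular highest weight, so a priori there can be nontempered cohomological $\pi_\infty$ with the right infinitesimal character contributing outside the middle degree. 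Indeed, the paper establishes middle-degree concentration only \emph{after} proving temperedness (Corollary~\ref{cor:sign-correct}), so you cannot use it as input here.

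The paper avoids this by arguing directly with the virtual representation. Since $H^\ast_\xi(\Pi)=\pm C_{\bG}\cdot V$ with $V$ an honest representation, while $H^\ast_\xi(\Pi)=\sum_i(-1)^iH^i_\xi(\Pi)$ with each $H^i$ pure of weight $i$ (smooth proper model at the unramified prime $p$), every Frobenius eigenvalue of $V$ must be a Weil number of some integer weight, and all occurring weights have the same parity. Now one must control the signature: for $n$ odd one can take signature $(1,n-1),(0,n),\ldots,(0,n)$, so $V$ is a twist of $\varphi_{\Pi_v}$ and a nontempered complementary series exponent $0<a<\tfrac12$ would give a non-half-integral weight, a contradiction. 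For $n$ even the parity obstruction may force signature $(1,n-1),(1,n-1),(0,n),\ldots$, so $V$ is a twist of $\varphi_{\Pi'_v}^{\otimes 2}$; integrality only yields $a\in\{0,\tfrac14\}$, and one eliminates $a=\tfrac14$ because it would produce three \emph{consecutive} integer weights, violating the same-parity constraint. Your sketch does not address this parity obstruction on signatures, and your ``vary the compact place'' idea with arbitrary nondegenerate signatures does not give standard representations (for $(p,q)$ with $p\ge2$ one gets exterior powers), so it does not furnish the comparison you want.
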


\begin{rem}
  In fact the proof shows that $\Pi_v$ is tempered at all ramified places as well
  unless $n$ is even and condition (3) of the theorem fails.
\end{rem}

\begin{proof} Fix the place $v$ of $M$, and let $p$ be the rational prime below $v$. We adjoin several quadratic extensions of $\Q$ which are split at $p$ to $M$ to make the following assumptions true:
\begin{itemize}
\item $F^\prime=M$ is the composite of the totally real subfield $F_0^\prime$ and an imaginary quadratic field $\cK$ in which $p$ splits,
\item $\Pi$ and $F^\prime$ are only ramified at places above $\Spl_{F^\prime/F_0^\prime}$,
\item $F_0^\prime\neq \Q$.
\end{itemize}
Note that the quadratic base change of $\Pi$ stays conjugate selfdual and $\Pi_\infty$ stays regular algebraic. If $\Pi$ is unramified at some place where the quadratic field ramifies, then it also stays cuspidal, cf. \cite{ArthurClozel}. Finally, we adjoin one more real quadratic extension of $\Q$, split at $p$, to $F_0^\prime$ and $F^\prime$, to get $F_0$ and $F$. One checks that all assumptions remain true for $F$ in place of $F^\prime$. Let $\Pi^\prime$ be the base-change of the original $\Pi$ to $F^\prime$, and let $\Pi$ be the further base-change to $F$.

Using Lemma 7.2 of \cite{Shin11}, we arrive at a situation where the assumptions of (Case ST) are satisfied. Consider a PEL datum whose associated group $\bG$ is
quasi-split at all finite places and has as signature at infinity either
  \begin{itemize}
    \item[(i)] $(1,n-1)$, $(0,n)$, ..., $(0,n)$ or
    \item[(ii)] $(1,n-1)$, $(1,n-1)$, $(0,n)$, ..., $(0,n)$.
  \end{itemize}
  In case (ii), we require that the two infinite places of $F_0$ with signature $(1,n-1)$ lie above one place of $F_0^\prime$.
  If $n$ is odd then there is no obstruction for choosing the signature. In general there is a parity
  obstruction but one can still find such a PEL datum with one of the two signatures. Note that all assumptions of this section are satisfied.

  Let $\Sh$ denote the corresponding Shimura variety. Note that the reflex field $E$ is given by $F$ in case (i), and by $F^\prime$ in case (ii).
  In case (i), the proof of \cite[Cor 6.5]{Shin11} (following the proof presented on page 207
   of \cite{HT01}), can be combined with the Weil conjecture to imply that $\Pi$ is tempered at $v$. We note that this argument works even at ramified places.
  Let us explain the argument in the more subtle case (ii). Note that we have a place $v$ of $F^\prime$ and want to prove that $\Pi^\prime_v$ is tempered. We use the previous theorem at the place $v$ of $E=F^\prime$. The representation $r_{-\mu}\circ \varphi_{\pi_p}|_{W_{F^\prime_v}}$ is given by a unitary twist of $\varphi_{\Pi^\prime_v}^{\otimes 2}$. The classification of unitary generic representations shows that $\Pi^\prime_v$ is induced from a representation
  \[
  \bigotimes_i (\pi_i |\cdot|^{a_i}\otimes \pi_i |\cdot|^{-a_i})\otimes \bigotimes_j \pi_j\ ,
  \]
  of a Levi subgroup, where $\pi_i$ are tempered representations of smaller $\GL_k$'s, and $0<a_i<\frac 12$ are real numbers. We want to prove that the first factor is trivial. Our assumptions ensure that this representation already occurs at an unramified level, for which there exists a smooth model of our Shimura variety. Thus, it follows from the Weil conjectures and the fact that $\varphi_{\Pi^\prime_v}^{\otimes 2}$ occurs in the cohomology of the Shimura variety that $4a_i\in \mathbb{Z}$, i.e. $a_i=\frac 14$ for all $i$. If the first factor is nonzero, then it follows that $\varphi_{\Pi^\prime_v}^{\otimes 2}$ contributes nontrivially to three consecutive degrees. On the other hand, the cohomology is an alternating sum, with consecutive degrees contributing different signs (which cannot cancel each other as they have different weights). This contradiction finishes the proof.
\end{proof}

\begin{cor}\label{cor:sign-correct} In both (Case ST) and (Case END), for any $\pi_f$ such that $H^*_\xi(\pi_f)\neq 0$, $\pi_f^S$ is unramified and $BC(\iota_\ell\pi_f^S)\simeq \Pi^S$, the $\pi_f$-isotypic component $H^i_\xi(\pi_f)$ is nonzero only for $i=\dim \Sh$. In particular, the sign in Theorem \ref{t:cohom-Sh} is correct.
\end{cor}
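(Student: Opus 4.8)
The plan is to establish the concentration of $H^i_\xi(\pi_f)$ in the single degree $i=\dim\Sh$ using the Weil conjectures together with the Ramanujan--Petersson corollary just proved, evaluated at an auxiliary unramified place, and then to pin down the sign in Theorem \ref{t:cohom-Sh} by comparing with the positivity of the Tamagawa number $\tau(\bG)$.

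First I would choose an auxiliary rational prime $p_0\notin S$ which splits in $\cK$; such $p_0$ exist by Chebotarev, and then $F$, $\Pi$ and $\varpi$ are all unramified at $p_0$, so the present section applies verbatim at $p_0$ for some prime $\fkp_0\mid p_0$ of $E$. By the Ramanujan--Petersson corollary above, applied to the cuspidal constituents of $\Pi$, the representation $\Pi_{p_0}$ is tempered; being unramified, it corresponds to a Weil--Deligne representation with trivial monodromy all of whose Frobenius eigenvalues have the same absolute value. Consequently the honest representation $R_0'$ appearing on the right-hand side of Theorem \ref{t:cohom-Sh} at $\fkp_0$ --- namely $(r_{-\mu}\circ\varphi_{\pi_{p_0}}|_{W_{E_{\fkp_0}}})|\cdot|_{\fkp_0}^{-\dim\Sh/2}$ in (Case ST), and the summand $V^{\pm}_{-\mu}|\cdot|_{\fkp_0}^{-\dim\Sh/2}$ in (Case END) by the rewriting \eqref{e:alternative} --- is \emph{pure}, of weight equal to that of the middle cohomology $H^{\dim\Sh}_\xi$ (the normalizing twist $|\cdot|^{-\dim\Sh/2}$ being there precisely for this). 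On the other hand, since $\Sh_K$ is proper by condition (4) and smooth for $K$ small, and $\cF_{\xi,K}$ is lisse and pure, the Weil conjectures give that $H^i_\xi$ is pure of weight $i+w_\xi$ for a fixed integer $w_\xi$; in particular $H^i_\xi$ and $H^j_\xi$ have distinct weights for $i\neq j$.

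Now I would combine these. By definition $H^*_\xi(\Pi)=\sum_i(-1)^iH^i_\xi(\Pi)$, while Theorem \ref{t:cohom-Sh} --- which we have proved up to sign --- gives $H^*_\xi(\Pi)|_{W_{E_{\fkp_0}}}=\epsilon\cdot C_\bG\cdot R_0'$ for some $\epsilon\in\{\pm1\}$, a representation pure of a single weight. Grading both sides in the Grothendieck group of $W_{E_{\fkp_0}}$-representations by the absolute value of a Frobenius lift, and using that pure representations of distinct weights are linearly independent there, I obtain $H^i_\xi(\Pi)|_{W_{E_{\fkp_0}}}=0$ for every $i\neq\dim\Sh$, hence $H^i_\xi(\Pi)=0$, since vanishing of a Galois representation can be tested on a decomposition group. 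As $H^i_\xi(\Pi)$ is the direct sum of the honest representations $H^i_\xi(\pi_f)$ over the relevant $\pi_f$, this forces $H^i_\xi(\pi_f)=0$ for all such $\pi_f$ and all $i\neq\dim\Sh$, which is the first assertion. It then remains to read off the sign: we now have $H^*_\xi(\pi_f)=(-1)^{\dim\Sh}H^{\dim\Sh}_\xi(\pi_f)$ with $H^{\dim\Sh}_\xi(\pi_f)$ an honest, and by hypothesis nonzero, representation, so summing over $\pi_f$ gives $H^*_\xi(\Pi)=(-1)^{\dim\Sh}H^{\dim\Sh}_\xi(\Pi)$ with $H^{\dim\Sh}_\xi(\Pi)$ honest and nonzero. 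Comparing this with $H^*_\xi(\Pi)|_{W_{E_\fkp}}=\epsilon\cdot C_\bG\cdot R'$ (where $R'$ is the honest effective representation on the right-hand side of Theorem \ref{t:cohom-Sh}, using \eqref{e:alternative} in (Case END)) and using $C_\bG=(-1)^{\dim\Sh}\tau(\bG)$ with $\tau(\bG)\in\Q_{>0}$, I get $H^{\dim\Sh}_\xi(\Pi)|_{W_{E_\fkp}}=\epsilon\cdot\tau(\bG)\cdot R'$; since the left-hand side is a nonzero effective class and $\tau(\bG)R'$ is effective, necessarily $\epsilon=+1$, which is exactly the statement that the sign in Theorem \ref{t:cohom-Sh} is correct.

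The main obstacle --- and the only point that is not bookkeeping --- is the purity input at the auxiliary place: one must know that the virtual $W_{E_{\fkp_0}}$-representation produced by Theorem \ref{t:cohom-Sh} there is, up to sign, a genuine representation pure of a single weight. This is where it matters that $p_0$ is unramified, so that $\Pi_{p_0}$ carries no monodromy (the potential source of contributions in several degrees), and where one invokes both the Ramanujan--Petersson corollary and, in (Case END), the rewriting \eqref{e:alternative}. Alternatively, the concentration in the middle degree can be obtained directly at $\infty$: any automorphic representation contributing to $H^*_\xi(\pi_f)$ has a cohomological archimedean component whose base change is $\Pi_\infty$, which is tempered (being cuspidal cohomological, or induced from such), so this component is a discrete series representation (or a limit of discrete series if $\xi$ is singular), whose $(\mathfrak g,K)$-cohomology is concentrated in degree $q(\bG)=\dim\Sh$; the sign is then fixed exactly as above.
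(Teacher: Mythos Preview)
Your argument is correct and is precisely what the paper's one-line proof ``This is a direct application of the Weil conjectures'' has in mind: pass to an auxiliary unramified split prime $p_0$ where the Shimura variety has good reduction, use the Ramanujan--Petersson corollary to see that the right-hand side of Theorem~\ref{t:cohom-Sh} is pure of a single weight there, and match against Deligne's purity of $H^i_\xi$ to force concentration in the middle degree; the sign then falls out from the positivity of $\tau(\bG)$ exactly as you write. The alternative archimedean argument you sketch (temperedness at $\infty$ forces discrete series, hence $(\mathfrak g,K)$-cohomology concentrated in degree $q(\bG)$) is also valid and is a genuinely different route, though the paper does not take it.
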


\begin{proof} This is a direct application of the Weil conjectures.
\end{proof}

\begin{rem}
  We restricted ourselves to (Case ST) and (Case END) above for simplicity, as this is enough for the
  application of the next section. It should be possible to treat the case of general $\Pi$
  once finer information becomes available about the terms in the twisted trace formula for $\bG\otimes_\Q \cK$.
 Such information would be obtained
 by carrying over the results of \cite{White} and \cite{ArthurEndoscopy} to the case of unitary similitude groups.
\end{rem}

\section{Construction of Galois representations}\label{s:Galois}

  In this final section we remark that the approach used in this paper provides a shorter proof of
  the main result of \cite{Shin11}, which was based on a generalization of
  the method of \cite{HT01}. In particular one can bypass the use of
  Igusa varieties, the Newton stratification, and the analogues of the first and second basic identities
  (\cite[Thm VI.2.9, V.5.4]{HT01}, cf. \cite[Prop 5.2, Thm 6.1]{Shin11}) of Harris and Taylor.

\begin{thm}\label{t:ExGalRepr}(\cite[Thm 1.2]{Shin11})
  Let $F$ be any CM field. Let $\Pi$ be a cuspidal automorphic representation of
  $\GL_n(\A_F)$ such that
  \begin{enumerate}
      \item $\Pi^\vee\simeq \Pi\circ c$,
      \item $\Pi_\infty$ has the same infinitesimal
      character as some irreducible algebraic representation $\Xi^\vee$
      of the restriction of scalars $R_{F/\Q} \GL_n$, and
      \item $\Xi$ is slightly regular, if $n$ is even.
  \end{enumerate}
  Then for each prime $\ell$ and an isomorphism $\iota_\ell:\ol{\Q}_\ell\simeq \C$,
  there exists a continuous semisimple representation $R_{\ell,\iota_\ell}(\Pi):
  \Gal(\ol{F}/F)\rightarrow \GL_n(\ol{\Q}_\ell)$ such that
   for any place $y$ of $F$ not dividing $\ell$, the
   Frobenius semisimple Weil-Deligne representation associated to
   $R_{\ell,\iota_\ell}(\Pi)|_{\Gal(\ol{F}_y/F_y)}$ corresponds to $\iota_\ell^{-1}\Pi_y$
   via (a suitably normalized) local Langlands correspondence, except possibly for the
   monodromy operator $N$.
\end{thm}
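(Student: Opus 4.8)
The plan is to deduce Theorem~\ref{t:ExGalRepr} from Theorem~\ref{t:cohom-Sh} in (Case ST), so that all Shimura-variety input — including the behaviour at ramified split places — replaces the Igusa-variety arguments and the basic identities of \cite{HT01} used in \cite{Shin11}. First I would reduce, by solvable base change (\cite{ArthurClozel}) together with the descent arguments of \cite[Lem.~7.2]{Shin11} (cf. also \cite{HT01}, Chapter~VII), to the situation of Section~\ref{s:compact}: $F=F_0\cK$ is the composite of a totally real field $F_0\neq\Q$ and an imaginary quadratic field $\cK$, the base-changed $\Pi$ is still cuspidal, $\theta$-stable and conjugate selfdual, and both $F$ and $\Pi$ are ramified only at primes in $\Spl_{F/F_0}$. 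Since the asserted local-global compatibility is insensitive to the monodromy operator $N$ and descends along solvable extensions, it suffices to treat this situation. I would then fix a PEL datum as in Section~\ref{s:compact}, with unitary similitude group $\bG$ quasi-split at all finite places and with a compact factor at one infinite place, of signature $(1,n-1),(0,n),\dots,(0,n)$ or $(1,n-1),(1,n-1),(0,n),\dots,(0,n)$ — the second being forced by a parity obstruction when $n$ is even, which is exactly where hypothesis~(3) is used — together with an algebraic representation $\xi$ of $\bG$ whose $L$-parameter base-changes to that of $\Pi_\infty$. Condition~(3) of Section~\ref{s:compact} guarantees a $\theta$-stable automorphic representation of $\bG(\A_\cK)$ with base change $\Pi$, hence a representation $\pi_p$ of $\bG(\Q_p)$ with $BC(\pi_p)=\Pi_p$ for each $p\in\Spl_{F/F_0}$.

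Next I would apply Theorem~\ref{t:cohom-Sh} in (Case ST): for every prime $p$ split in $\cK$ and every prime $\fkp\mid p$ of the reflex field $E$,
\[
H^*_\xi(\Pi)|_{W_{E_{\fkp}}}=C_{\bG}\,(r_{-\mu}\circ\varphi_{\pi_p}|_{W_{E_{\fkp}}})\,|\cdot|_{\fkp}^{-\dim\Sh/2}.
\]
By Corollary~\ref{cor:sign-correct} the virtual representation $H^*_\xi(\Pi)$ is concentrated in the middle degree, so up to the explicit sign $C_{\bG}$ it is a genuine semisimple representation of $\Gal(\ol{\Q}/E)$. For $p=w_0w_0^c$ split in $\cK$, the decomposition $\bG(\Q_p)=\prod_{v\mid w_0}\GL_n(F_v)\times\Q_p^\times$ together with the shape of $\mu$ makes $r_{-\mu}\circ\varphi_{\pi_p}|_{W_{F_v}}$, for each $v\mid w_0$, a product of a unitary twist of $\varphi_{\Pi_v}$ with contributions coming from the remaining places above $p$. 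Extracting the $\GL_n$-factor — exactly as in \cite{HT01} and \cite{Shin11} — produces, for these auxiliary data, a semisimple representation $R_{\ell,\iota_\ell}(\Pi)\colon\Gal(\ol{F}/F)\ra\GL_n(\ol{\Q}_\ell)$, well defined by strong multiplicity one, such that at every place $v$ of $F$ lying above a prime split in $\cK$ and not dividing $\ell$ the Frobenius-semisimple Weil representation attached to $R_{\ell,\iota_\ell}(\Pi)|_{W_{F_v}}$ corresponds to $\iota_\ell^{-1}\Pi_v$ up to the monodromy operator.

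To reach all finite places $v\nmid\ell$ and to descend to the original field, I would carry out the patching procedure of \cite{HT01} and \cite{Far04} (cf. the proof of \cite[Thm~A.7.2]{Far04}): construct the above representation after further solvable base changes arranged so that a prescribed place $v$ becomes split in the relevant imaginary quadratic field; show by the Chebotarev density theorem that such representations coincide on the overlap (they agree at almost all unramified places), hence define a single representation over a large CM field; and then invoke Brauer's theorem to glue the restrictions of this representation to subfields over which $\Pi$ base-changes cuspidally, obtaining $R_{\ell,\iota_\ell}(\Pi)$ over $F$ itself. Local-global compatibility at a given $v\nmid\ell$ is verified by passing to an auxiliary field in which $v$ splits and applying the previous paragraph; the correct normalization of twists is fixed by matching central characters and determinants, and the semisimplicity and purity needed to pin down the sign $C_{\bG}$ are supplied by Corollary~\ref{cor:sign-correct} and the Weil conjectures applied to a smooth integral model. (When the local data permit it, one may alternatively feed the nonendoscopic output of Section~\ref{s:lgc}, Corollary~\ref{cor:ExGalRepr}, into the same patching argument for some of the auxiliary cases.)

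The main obstacle I anticipate is organisational rather than conceptual: one must arrange the base-change reduction so that simultaneously the base-changed $\Pi$ stays cuspidal, ramification stays inside $\Spl_{F/F_0}$, $F_0\neq\Q$, and (for even $n$) the parity obstruction still leaves room for an admissible signature; and one must check that the representation cut out of cohomology is genuinely independent of all auxiliary choices, so that the Chebotarev/Brauer descent is legitimate and produces a representation of $\Gal(\ol{F}/F)$ with the claimed compatibility at every $v\nmid\ell$. Every ingredient involving Shimura varieties, and in particular the treatment of ramified split places, is already contained in Theorem~\ref{t:cohom-Sh}, which is precisely what makes it possible to avoid Igusa varieties, the Newton stratification and the basic identities of \cite{HT01}.
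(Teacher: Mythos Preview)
Your overall architecture---reduce by solvable base change to the setting of Section~\ref{s:compact}, invoke Theorem~\ref{t:cohom-Sh}, then patch as in \cite{HT01} and \cite{Far04}---matches the paper's intent, and for $n$ odd your proposal is essentially the paper's proof: signature $(1,n-1),(0,n),\dots,(0,n)$, (Case ST), and $r_{-\mu}\circ\varphi_{\pi_p}|_{W_{F_v}}$ is a twist of $\varphi_{\Pi_v}$, so the $n$-dimensional representation is read off directly.

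The gap is in the even case. Your plan to use $n$-variable unitary groups with signature $(1,n-1),(1,n-1),(0,n),\dots,(0,n)$ does not produce an $n$-dimensional Galois representation of $\Gal(\ol F/F)$. With that signature the reflex field is no longer $F$ (it is the smaller field $F'$ in the proof of the Ramanujan--Petersson corollary), and $r_{-\mu}\circ\varphi_{\pi_p}$ restricted to $W_{F'_v}$ is a unitary twist of $\varphi_{\Pi'_v}^{\otimes 2}$, not of $\varphi_{\Pi_v}$. That is good enough to run weight arguments for temperedness, but there is no way to extract the $n$-dimensional $R_{\ell,\iota_\ell}(\Pi)$ from this tensor square without already knowing what you are trying to construct. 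Relatedly, your identification of where hypothesis~(3) enters is off: the parity obstruction for $n$-variable groups has nothing to do with ``slightly regular''.

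The paper (following \cite{Shin11}) handles even $n$ differently: one passes to unitary similitude groups in an \emph{odd} number $m=n+1$ of variables with signature $(1,m-1),(0,m),\dots,(0,m)$, so the reflex field is again $F$ and $r_{-\mu}$ has the right shape. The representation $\Pi$ then sits inside a $\theta$-stable induced representation on $\GL_{m}$, which places you in (Case END), not (Case ST). Theorem~\ref{t:cohom-Sh} gives
\[
H^*_\xi(\Pi)|_{W_{E_\fkp}} = C_{\bG}\cdot V^{\pm}_{-\mu}\,|\cdot|_\fkp^{-\dim\Sh/2}
\]
with the sign governed by $e_\Pi=e_1^{-1}e_2$. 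The ``slightly regular'' hypothesis~(3) is used exactly here, via \cite[Lem.~7.3]{Shin11}, to choose the archimedean data so that $e_\Pi$ picks out the eigenspace containing the twist of $\varphi_{\Pi_v}$; only then can one read off the $n$-dimensional piece and feed it into the patching argument. If you rewrite the even case along these lines, the rest of your proposal goes through.
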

  See \cite{Shin11} for the meaning of (3) and
  a complete list of assertions regarding $R_{\ell,\iota_\ell}(\Pi)$.
  It is not difficult to extend the above theorem to the case where
  $\Pi^\vee\simeq \Pi\circ c$ holds up to a character twist or where
  $\Pi^\vee\simeq \Pi$ up to a character and $F$ is a totally real field.
  Moreover using the above theorem as a starting point, several mathematicians
  have strengthened it (\cite{CH}, \cite{Caraiani}, \cite{BLGGTa},
  \cite{BLGGTb}). For instance (3) has become unnecessary and a similar
  conclusion holds at $y$ dividing $\ell$.

\begin{proof}
  In \cite{Shin11}, the main theorems of section 7
  follow from Theorem 6.4 (and its corollaries). Since our Theorem \ref{t:cohom-Sh}
  reproves Theorem 6.4 (by a different method) when applied to the Shimura varieties
  of \cite{Shin11}, the main results of that article can be derived from Theorem \ref{t:cohom-Sh} by the same reasoning.
  (Our proof of Theorem \ref{t:cohom-Sh} shows that
  the sign $e_\Pi$ is equal to $e^{-1}_1e_2$ of \cite{Shin11}. Hence it can be controlled
  by the choice of archimedean parameters
  as in Lemma 7.3 of that paper.)
  Note that as in \cite{Shin11} it is enough to consider Shimura varieties
  attached to unitary similitude groups in an odd number of variables with signature
  $(1,m-1)$, $(0,m)$, ..., $(0,m)$ at infinity where $m=n$ if $n$ is odd and $m=n+1$ if $n$ is even,
  even though Theorem \ref{t:cohom-Sh} covers more general cases.
\end{proof}

Finally, let us add one remark about the determination of the monodromy operator $N$,
which is the most delicate extra assertion in \cite[Thm 1.2]{Shin11}.
One easily proves temperedness of $\Pi_v$ for all $v$, so it remains to verify
the weight-monodromy conjecture for $R_{\ell,\iota_\ell}(\Pi)$, as in the paper of Taylor and Yoshida,
\cite{TaylorYoshida}, cf. \cite{Shin11}, Section 7. The input that one needs is a description
of the cohomology of each Newton stratum. This can be achieved by using the Langlands-Kottwitz method.
First, one has to check that the arguments of \cite{ScholzeDefSpaces} compute the cohomology
of the Newton stratum corresponding to the $\sigma$-conjugacy class $b\in B(\bG)$ once
$\phi_{\tau,h}$ is replaced by $\phi_{\tau,h}\chi_b$, where $\chi_b$ is the characteristic function
of the $\sigma$-conjugacy class $b$, at least if $j$ is sufficiently large. Now one has to follow the arguments
of this paper with $\phi_{\tau,h}\chi_b$ in place of $\phi_{\tau,h}$. Note that we have already determined
the twisted orbital integrals of $\phi_{\tau,h}$, which in turn determine those of $\phi_{\tau,h}\chi_b$.
This gives the desired description of the cohomology of each Newton stratum, which may then be used to
determine $N$.

\bibliographystyle{abbrv}
\bibliography{LKMethodStabilization}

\def\cprime{$'$}
\begin{thebibliography}{10}

\bibitem{Art06}
J.~Arthur.
\newblock A note on {$L$}-packets.
\newblock {\em Pure Appl. Math. Q.}, 2(1, Special Issue: In honor of John H.
  Coates. Part 1):199--217, 2006.

\bibitem{ArthurEndoscopy}
J.~Arthur.
\newblock {\em The endoscopic classification of representations: orthogonal and
  symplectic groups}.
\newblock 2011.

\bibitem{ArthurClozel}
J.~Arthur and L.~Clozel.
\newblock {\em Simple algebras, base change, and the advanced theory of the
  trace formula}, volume 120 of {\em Annals of Mathematics Studies}.
\newblock Princeton University Press, Princeton, NJ, 1989.

\bibitem{BLGGTa}
T.~Barnet-Lamb, T.~Gee, D.~Geraghty, and R.~Taylor.
\newblock Local-global compatibility for $l=p$. {I}.
\newblock {\em preprint, \verb"http://math.ias.edu/~rtaylor"}.

\bibitem{BLGGTb}
T.~Barnet-Lamb, T.~Gee, D.~Geraghty, and R.~Taylor.
\newblock Local-global compatibility for $l=p$. {II}.
\newblock {\em preprint, \verb"http://math.ias.edu/~rtaylor"}.

\bibitem{BDK86}
J.~Bernstein, P.~Deligne, and D.~Kazhdan.
\newblock Trace {P}aley-{W}iener theorem for reductive $p$-adic groups.
\newblock {\em J. Analyse Math.}, 47:180--192, 1986.

\bibitem{Ber84}
J.~N. Bernstein.
\newblock Le ``centre'' de {B}ernstein.
\newblock In {\em Representations of reductive groups over a local field},
  Travaux en Cours, pages 1--32. Hermann, Paris, 1984.
\newblock Edited by P. Deligne.

\bibitem{BuzzardGee}
K.~Buzzard and T.~Gee.
\newblock The conjectural connections between automorphic representations and
  {G}alois representations.
\newblock arXiv:1009.0785.

\bibitem{Caraiani}
A.~Caraiani.
\newblock Local-global compatibility and the action of monodromy on nearby
  cycles.
\newblock {\em preprint, \verb"arXiv:1010:2188"}.

\bibitem{CH}
G.~Chenevier and M.~Harris.
\newblock Construction of automorphic {G}alois representations, {II}.
\newblock {\em preprint,
  \verb"http://http://fa.institut.math.jussieu.fr/node/45"}.

\bibitem{ClozelPurity}
L.~Clozel.
\newblock Purity reigns supreme.
\newblock {\em \\
  \verb"http://www.institut.math.jussieu.fr/projets/fa/bpFiles/Clozel2.pdf"}.

\bibitem{DKV84}
P.~Deligne, D.~Kazhdan, and M.-F. Vign{\'e}ras.
\newblock Repr\'esentations des alg\`ebres centrales simples {$p$}-adiques.
\newblock In {\em Representations of reductive groups over a local field},
  Travaux en Cours, pages 33--117. Hermann, Paris, 1984.

\bibitem{Far04}
L.~Fargues.
\newblock Cohomologie des espaces de modules de groupes $p$-divisibles et
  correspondances de {L}anglands locales.
\newblock {\em Ast\'{e}risque}, 291:1--200, 2004.

\bibitem{HarrisLabesse}
M.~Harris and J.-P. Labesse.
\newblock Conditional base change for unitary groups.
\newblock {\em Asian J. Math.}, 8(4):653--683, 2004.

\bibitem{HT01}
M.~Harris and R.~Taylor.
\newblock {\em The geometry and cohomology of some simple {S}himura varieties},
  volume 151 of {\em Annals of Mathematics Studies}.
\newblock Princeton University Press, Princeton, NJ, 2001.
\newblock With an appendix by Vladimir G. Berkovich.

\bibitem{HarrisTaylor}
M.~Harris and R.~Taylor.
\newblock {\em The geometry and cohomology of some simple {S}himura varieties},
  volume 151 of {\em Annals of Mathematics Studies}.
\newblock Princeton University Press, Princeton, NJ, 2001.
\newblock With an appendix by Vladimir G. Berkovich.

\bibitem{Hen00}
G.~Henniart.
\newblock Une preuve simple des conjectures de {L}anglands pour {$GL(n)$} sur
  un corps $p$-adiques.
\newblock {\em Invent. Math.}, 139:439--455, 2000.

\bibitem{KV06}
D.~Kazhdan and Y.~Varshavsky.
\newblock Endoscopic decomposition of certain depth zero representations.
\newblock In {\em Studies in {L}ie theory}, volume 243 of {\em Progr. Math.},
  pages 223--301. Birkh\"auser Boston, Boston, MA, 2006.

\bibitem{Kot84b}
R.~Kottwitz.
\newblock Shimura varieties and twisted orbital integrals.
\newblock {\em Math. Ann.}, 269:287--300, 1984.

\bibitem{Kot84a}
R.~Kottwitz.
\newblock Stable trace formula: {C}uspidal tempered terms.
\newblock {\em Duke Math.}, 51:611--650, 1984.

\bibitem{Kot86}
R.~Kottwitz.
\newblock Stable trace formula: {E}lliptic singular terms.
\newblock {\em Math. Ann.}, 275:365--399, 1986.

\bibitem{Kot92b}
R.~Kottwitz.
\newblock On the $\lambda$-adic representations associated to some simple
  {S}himura variaties.
\newblock {\em Invent. Math.}, 108:653--665, 1992.

\bibitem{KottwitzRapoportExFCristals}
R.~Kottwitz and M.~Rapoport.
\newblock On the existence of {$F$}-crystals.
\newblock {\em Comment. Math. Helv.}, 78(1):153--184, 2003.

\bibitem{KS99}
R.~Kottwitz and D.~Shelstad.
\newblock Foundations of twisted endoscopy.
\newblock {\em Ast\'{e}risque}, 255, 1999.

\bibitem{Kot85}
R.~E. Kottwitz.
\newblock Isocrystals with additional structure.
\newblock {\em Compositio Math.}, 56(2):201--220, 1985.

\bibitem{KottwitzStabilization}
R.~E. Kottwitz.
\newblock Shimura varieties and {$\lambda$}-adic representations.
\newblock In {\em Automorphic forms, {S}himura varieties, and {$L$}-functions,
  {V}ol.\ {I} ({A}nn {A}rbor, {MI}, 1988)}, volume~10 of {\em Perspect. Math.},
  pages 161--209. Academic Press, Boston, MA, 1990.

\bibitem{KottwitzLambdaAdic}
R.~E. Kottwitz.
\newblock On the {$\lambda$}-adic representations associated to some simple
  {S}himura varieties.
\newblock {\em Invent. Math.}, 108(3):653--665, 1992.

\bibitem{KottwitzPoints}
R.~E. Kottwitz.
\newblock Points on some {S}himura varieties over finite fields.
\newblock {\em J. Amer. Math. Soc.}, 5(2):373--444, 1992.

\bibitem{Kot97}
R.~E. Kottwitz.
\newblock Isocrystals with additional structure. {II}.
\newblock {\em Compositio Math.}, 109(3):255--339, 1997.

\bibitem{Lan}
K.-W. Lan.
\newblock {\em Arithmetic compactifications of {PEL}-type {S}himura varieties}.
\newblock ProQuest LLC, Ann Arbor, MI, 2008.
\newblock Thesis (Ph.D.)--Harvard University.

\bibitem{Lan79}
R.~Langlands.
\newblock Stable conjugacy: definitions and lemmas.
\newblock {\em Can. J. Math.}, 31:700--725, 1979.

\bibitem{LS87}
R.~Langlands and D.~Shelstad.
\newblock On the definition of transfer factors.
\newblock {\em Math. Ann.}, 278:219--271, 1987.

\bibitem{Mor10}
S.~Morel.
\newblock {\em On the cohomology of certain non-compact {S}himura varieties}.
\newblock Number 173 in Princeton University Press. Annals of Mathematics
  Studies, Princeton, New Jersey, 2010.

\bibitem{Ngo10}
B.~C. Ng{\^o}.
\newblock Le lemme fondamental pour les alg\`ebres de {L}ie.
\newblock {\em Publ. Math. Inst. Hautes \'Etudes Sci.}, (111):1--169, 2010.

\bibitem{RapoportGuide}
M.~Rapoport.
\newblock A guide to the reduction modulo {$p$} of {S}himura varieties.
\newblock {\em Ast\'erisque}, (298):271--318, 2005.
\newblock Automorphic forms. I.

\bibitem{RapoportRichartz}
M.~Rapoport and M.~Richartz.
\newblock On the classification and specialization of {$F$}-isocrystals with
  additional structure.
\newblock {\em Compositio Math.}, 103(2):153--181, 1996.

\bibitem{ScholzeGLn}
P.~Scholze.
\newblock The {L}anglands-{K}ottwitz approach for some simple {S}himura
  varieties.
\newblock 2010.

\bibitem{ScholzeLLC}
P.~Scholze.
\newblock The {L}ocal {L}anglands {C}orrespondence for $\mathrm{GL}_n$ over
  $p$-adic fields.
\newblock 2010.

\bibitem{ScholzeDefSpaces}
P.~Scholze.
\newblock The {L}anglands-{K}ottwitz method and deformation spaces of
  $p$-divisible groups.
\newblock 2011.

\bibitem{Sha90}
F.~Shahidi.
\newblock A proof of {L}anglands' conjecture on {P}lancherel measures;
  complementary series for {$p$}-adic groups.
\newblock {\em Ann. of Math. (2)}, 132(2):273--330, 1990.

\bibitem{Shin-Plan}
S.~W. Shin.
\newblock Automorphic {P}lancherel density theorem.
\newblock {\em to appear in Israel J. of Math.}

\bibitem{Shin11}
S.~W. Shin.
\newblock {G}alois representations arising from some compact {S}himura
  varieties.
\newblock {\em Annals of Math.}, 173:1645--1741, 2011.

\bibitem{TaylorYoshida}
R.~Taylor and T.~Yoshida.
\newblock Compatibility of local and global {L}anglands correspondences.
\newblock {\em J. Amer. Math. Soc.}, 20(2):467--493, 2007.

\bibitem{ViehmannWedhorn}
E.~Viehmann and T.~Wedhorn.
\newblock {E}kedahl-{O}ort and {N}ewton strata for {S}himura varieties of
  {P}{E}{L} type.
\newblock 2010.

\bibitem{Vog93}
D.~A. Vogan, Jr.
\newblock The local {L}anglands conjecture.
\newblock In {\em Representation theory of groups and algebras}, volume 145 of
  {\em Contemp. Math.}, pages 305--379. Amer. Math. Soc., Providence, RI, 1993.

\bibitem{Wal97}
J.-L. Waldspurger.
\newblock Le lemme fondamental implique le transfert.
\newblock {\em Comp. Math.}, 105(2):153--236, 1997.

\bibitem{Wal08}
J.-L. Waldspurger.
\newblock L'endoscopie tordue n'est pas si tordue.
\newblock {\em Mem. Amer. Math. Soc.}, 194(908):x+261, 2008.

\bibitem{White}
P.-J. White.
\newblock Tempered automorphic representations of the unitary group.
\newblock {\em \verb"arXiv:1106.1127v1 [math.NT]"}.

\end{thebibliography}

\end{document}